\newtheorem{theorem}{Theorem}[section]
\newtheorem{lemma}[theorem]{Lemma}
\newtheorem{proposition}[theorem]{Proposition}
\numberwithin{equation}{section}
\theoremstyle{definition}\newtheorem{remark}[theorem]{Remark}
\def\N{\mathbb{N}}
\def\cI{\mathcal{I}}
\def\cN{\mathcal{N}}
\def\R{\mathbb{R}}
\def\cL{\mathcal{L}}
\def\mH{\mathbb{H}}
\def\cE{\mathcal{E}}
\def\cH{\mathcal{H}}
\def\weakto{\rightharpoonup}
\def\sideremark#1{\ifvmode\leavevmode\fi\vadjust{\vbox to0pt{\vss
 \hbox to 0pt{\hskip\hsize\hskip1em
 \vbox{\hsize2.1cm\tiny\raggedright\pretolerance10000
  \noindent #1\hfill}\hss}\vbox to15pt{\vfil}\vss}}}%
\newcommand{\weakly}{\rightharpoonup}
\begin{document}

\title{
Small order asymptotics for nonlinear fractional problems
}
\author{V\'ictor Hern\'andez-Santamar\'ia\thanks{Instituto de Matem\'aticas, Universidad Nacional Aut\'onoma de M\'exico, Circuito Exterior, C.U., C.P. 04510 CDMX, Mexico. E-mails: \texttt{victor.santamaria@im.unam.mx}, \ \texttt{alberto.saldana@im.unam.mx}} \and Alberto Salda\~{n}a\footnotemark[1]}

\date{}

\maketitle
\begin{abstract}

We study the limiting behavior of solutions to boundary value nonlinear problems involving the fractional Laplacian of order $2s$ when the parameter $s$ tends to zero. In particular, we show that least-energy solutions converge (up to a subsequence) to a nontrivial nonnegative least-energy solution of a limiting problem in terms of the logarithmic Laplacian, \emph{i.e.} the pseudodifferential operator with Fourier symbol $\ln(|\xi|^2)$. These results are motivated by some applications of nonlocal models where a small value for the parameter $s$ yields the optimal choice. Our approach is based on variational methods, uniform energy-derived estimates, and the use of a new logarithmic-type Sobolev inequality.
\end{abstract}

\noindent{\bf 2020 MSC} (Primary) 
35B40; 
35S15, 
35J60, 
35R11 

{\small
\noindent{\bf Keywords:}  Nehari manifold, small order expansion, logarithmic nonlinearity, fractional Lane-Emden equation, mountain pass
}

\section{Introduction}

Nonlocal operators provide an important tool to model phenomena with anomalous diffusive behavior. Examples of this can be found in a wide variety of situations: in water waves, crystal dislocations, phase transitions, peridynamics, and finance, see \emph{e.g.} \cite{BV16,HB10,L04}. A particularly illustrative example of how the nonlocality can play a prominent role in a model comes from population dynamics \cite{PV18}, where a nonlinear (logistic-type) equation in terms of a fractional Laplacian of order $2s$ is used to describe the movement of a species. Here, a small order $s$ describes an almost static population (but capable of moving quickly long distances), whereas $s$ near $1$ relates to a very dynamic species which mostly moves short distances. Therefore, the parameter $s$ accounts for different dispersal strategies. Interestingly, in \cite{PV18}, it is shown that a very small order $s$ can be the best strategy for survival if the habitat is not too fragmented or not too hostile in average. 

Similarly, a small value for the exponent $s$ yields the optimal choice in other applications such as optimal control \cite{SV17}, approximation of fractional harmonic maps \cite{ABS21}, and fractional image denoising \cite{AB17}. This motivates the research of small order asymptotics of nonlinear fractional problems. Moreover, the mathematical structure that arises in this analysis is rich and interesting in itself from a theoretical point of view. 

In this paper, we develop a method to study the small order asymptotics of subcritical nonlinear problems. To make the main ideas in our arguments more transparent, we focus on a model problem with power-type nonlinearity, and we refer to Remark \ref{ext:rmk} for a discussion on extensions and generalizations. To be more precise, we consider the following fractional Dirichlet problem
\begin{align}\label{subcritical:intro}
 (-\Delta)^s u_s = |u_s|^{p_s-2}u_s\quad \text{ in }\Omega,\qquad u_s=0\quad \text{ in }\R^N\backslash\Omega,
\end{align}
where $N\geq 1$, $s\in(0,\frac{1}{4})$, $p_s\in(2,2^*_s)$ is superlinear and subcritical, $2^*_s=\frac{2N}{N-2s}$ is the critical Sobolev exponent, and $\Omega\subset \R^N$ is a bounded open Lipschitz set. Here $(-\Delta)^s$ denotes the integral fractional Laplacian given by the hypersingular integral in the principal value sense
\begin{align*}
(-\Delta)^s u(x)
= c_{N,s}\, p.v.\int_{\R^N}\frac{u(x)-u(y)}{|x-y|^{N+2s}}\ dy,\qquad  c_{N,s}:= 4^s\pi^{-\frac{N}{2}}s(1-s)\frac{\Gamma(\tfrac{N}{2}+s)}{\Gamma(2-s)}.
\end{align*}

\medskip

Our goal is to answer the following question:
\begin{align}\label{q}
    \text{\emph{What is the limit of a solution $u_s$ of~\eqref{subcritical:intro} as $s\to 0^+$?} }
\end{align}

Note that (at least formally) both sides of equation \eqref{subcritical:intro} behave similarly in the limit, namely, $(-\Delta)^s u\to u$ and $|u|^{p_s-2}u\to u$ as $s\to 0^+$. Therefore, to answer question~\eqref{q}, it is necessary to consider the first order expansion (with respect to $s$) on both sides of equation~\eqref{subcritical:intro}. This leads us naturally to the logarithmic Laplacian $L_\Delta$, which was introduced in \cite{CW19} and has the following pointwise evaluation
\begin{align*}
L_\Delta u(x) =  c_N\, p.v.\int_{B_1(x)}\frac{u(x)-u(y)}{|x-y|^{N}}\ dy
-c_N\int_{\R^N\backslash B_1(x)}\frac{u(y)}{|x-y|^N}\ dy + \rho_N\, u(x),
\end{align*}
where
$c_N$ and $\rho_N$ are explicit constants (see~\eqref{cN:def} and~\eqref{rho:def}) and
$B_1(x)$ is the open ball in $\R^N$ of radius 1 centered at $x$. Furthermore, as shown in \cite{CW19}, $L_\Delta$ is a pseudodifferential operator with Fourier symbol $2\ln(|\xi|)$ and, for $u\in C^\infty_c(\R^N)$,
\begin{align*}
 L_\Delta u = \lim_{s\to 0^+}\frac{d}{ds}(-\Delta)^s u.
 \end{align*}
 
We give the following answer to question~\eqref{q}. We refer to Section \ref{not:sec} for the precise definitions of least-energy solutions and related notation. In particular, $\cH^{s_k}_0(\Omega)$ and $\mH(\Omega)$ denote suitable Hilbert spaces (see \eqref{Hs:def} and \eqref{Hdef}) and $J_{s_k}:\cH^{s_k}_0(\Omega)\to\R$ and $J_0:\mH(\Omega)\to\R$ are the \emph{energy functionals} associated to~\eqref{equsk} and~\eqref{log:prob:intro} (see \eqref{Js:def} and \eqref{J0def}).

\begin{theorem}\label{main:thm}Let $N\geq 1$ and let $\Omega\subset \R^N$ be a bounded open Lipschitz set. For $s\in(0,\frac{1}{4})$, let $p_s:=p(s)$, where 
 \begin{align}\label{p:hyp}
 p\in C^1([0,\tfrac{1}{4}]),\quad 
2<p(s)<2^*_s:=\frac{2N}{N-2s}\ \ \text{ for $s\in\left(0,\tfrac{1}{4}\right)$,}\quad \text{ and }
\quad p'(0)\not\in\left\{0,\tfrac{4}{N}\right\}.    
\end{align}
 Let $(s_k)_{k\in\N}\subset(0,\frac{1}{4})$ be such that $\lim_{k\to\infty}s_k=0$ and, for $k\in\N,$ let $u_{s_k}\in\cH^{s_k}_0(\Omega)$ be a least-energy solution of
 \begin{align}\label{equsk}
 (-\Delta)^{s_k} u_{s_k} = |u_{s_k}|^{p_{s_k}-2}u_{s_k}\quad \text{ in }\Omega,\qquad u_{s_k}=0\quad \text{ in }\R^N\backslash\Omega.
 \end{align}
 Then there is a least-energy solution $u_0\in \mH(\Omega)\backslash\{0\}$ of 
 \begin{align}\label{log:prob:intro}
  L_\Delta u_0 =\mu\ln(|u_0|)u_0\ \ \text{ in }\Omega,\qquad u_0=0\ \ \text{ in }\R^N\backslash \Omega,
 \qquad  \mu
 :=p'(0),
 \end{align}
 such that, passing to a subsequence, 
 \begin{align}\label{cL2}
 u_{s_k}\to u_0\quad \text{ in $L^2(\R^N)$ as $k\to\infty$.}
 \end{align}
  Moreover,
\begin{align}\label{energies}
\lim_{k\to\infty}\frac{1}{s_k}J_{s_k}(u_{s_k})=J_0(u_0)>0\qquad \text{ and }\qquad 
\lim_{k\to\infty}\|u_{s_k}\|_{s_k} = |u_0|_2.
\end{align}
\end{theorem}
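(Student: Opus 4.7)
The strategy is variational. Since $u_{s_k}$ minimizes $J_{s_k}$ on the Nehari manifold $\cN_{s_k}=\{u\in\cH^{s_k}_0(\Omega)\setminus\{0\}:\langle J'_{s_k}(u),u\rangle=0\}$, I would first use the Nehari identity $\|u_{s_k}\|_{s_k}^2=|u_{s_k}|_{p_{s_k}}^{p_{s_k}}$ together with the expansion $p_s=2+s\mu+o(s)$, where $\mu:=p'(0)$, to obtain the normalization
\[
\frac{J_{s_k}(u_{s_k})}{s_k}=\frac{1}{s_k}\Bigl(\tfrac12-\tfrac{1}{p_{s_k}}\Bigr)\|u_{s_k}\|_{s_k}^2=\Bigl(\tfrac{\mu}{4}+o(1)\Bigr)\|u_{s_k}\|_{s_k}^2.
\]
For a uniform upper bound, I would fix $v\in C_c^\infty(\Omega)\setminus\{0\}$, show that its Nehari rescaling $t_{s_k}(v)v\in\cN_{s_k}$ admits a limit $t_0(v)$ with $t_0(v)v\in\cN_0$ (the Nehari manifold of~\eqref{log:prob:intro}), and compute $\lim_k J_{s_k}(t_{s_k}(v)v)/s_k=J_0(t_0(v)v)$ via the first-order expansions $\|v\|_s^2=|v|_2^2+s\,\cE_L(v,v)+o(s)$ and $|v|_{p_s}^{p_s}=|v|_2^2+s\mu\int v^2\ln|v|\,dx+o(s)$, which follow from $\tfrac{d}{ds}(-\Delta)^s|_{s=0}=L_\Delta$ and from pointwise differentiation of $|v|^{p_s}$. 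Minimality of $u_{s_k}$ then yields $\limsup_k J_{s_k}(u_{s_k})/s_k\le\inf_{\cN_0}J_0=:c_0$, so in particular $\|u_{s_k}\|_{s_k}\le C$ uniformly.

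The new logarithmic-type Sobolev inequality announced in the abstract should upgrade this bound to uniform $\mH(\Omega)$-boundedness of $(u_{s_k})$, through an estimate of the form $s^{-1}(\|u\|_s^2-|u|_2^2)\gtrsim \cE_L(u,u)-C|u|_2^2$ with $s$-uniform constants. Compactness of $\mH(\Omega)\hookrightarrow L^2(\Omega)$ then produces a subsequence with $u_{s_k}\weakto u_0$ in $\mH(\Omega)$ and $u_{s_k}\to u_0$ in $L^2(\Omega)$. To identify $u_0$ as a weak solution of \eqref{log:prob:intro}, I would take $\varphi\in C_c^\infty(\Omega)$ and rewrite the weak form of~\eqref{equsk} as
\[
\frac{1}{s_k}\bigl(\cE_{s_k}(u_{s_k},\varphi)-\langle u_{s_k},\varphi\rangle_{L^2}\bigr)=\frac{1}{s_k}\int_\Omega\bigl(|u_{s_k}|^{p_{s_k}-2}u_{s_k}-u_{s_k}\bigr)\varphi\,dx.
\]
The left side converges to $\cE_L(u_0,\varphi)$ via the operator expansion and weak $\mH(\Omega)$-convergence; the right side converges to $\mu\int_\Omega\ln|u_0|\,u_0\,\varphi\,dx$ using the pointwise expansion $|t|^{p_s-2}t=t+s\mu\,t\ln|t|+o(s)$ together with an equi-integrability argument for $u_{s_k}^2\ln|u_{s_k}|$ powered by the log-Sobolev inequality.

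For nontriviality of $u_0$ I would exploit the standard lower bound on Nehari manifolds: $1\le C_{s_k}\|u_{s_k}\|_{s_k}^{p_{s_k}-2}$ with $C_{s_k}$ having controlled behavior as $s_k\to 0$, giving $\|u_{s_k}\|_{s_k}\ge\eta>0$. Combining the Nehari identity with $L^2$-convergence, $\|u_{s_k}\|_{s_k}^2=|u_{s_k}|_{p_{s_k}}^{p_{s_k}}\to|u_0|_2^2$ (the latter requires $\int u_{s_k}^2(|u_{s_k}|^{p_{s_k}-2}-1)\,dx\to 0$, again controlled by log-Sobolev bounds), so $|u_0|_2\ge\eta$ and $u_0\not\equiv 0$. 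Since any critical point of $J_0$ in $\cN_0$ satisfies $J_0(u)=(\mu/4)|u|_2^2$, we get $\lim_k J_{s_k}(u_{s_k})/s_k=(\mu/4)|u_0|_2^2=J_0(u_0)$; because $u_0\in\cN_0$ gives $J_0(u_0)\ge c_0$ while the upper bound from the first paragraph gives $J_0(u_0)\le c_0$, $u_0$ is a least-energy solution. The statements in~\eqref{energies} follow immediately.

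The main obstacle is the logarithmic Sobolev inequality and its quantitative consequences: it must be sharp enough to (a) yield an $\mH(\Omega)$-bound from the weak information $\|u_{s_k}\|_{s_k}\le C$, enabling compact $L^2$-convergence, and (b) provide uniform integrability of $u_{s_k}^2\ln|u_{s_k}|$, which drives every limit passage through $s$-dependent terms. A secondary difficulty is that both the bilinear form $\cE_{s_k}$ and the exponent $p_{s_k}$ vary with $k$, precluding off-the-shelf use of Brezis--Lieb or Vitali-type convergence tools. I expect the hypothesis $p'(0)\notin\{0,\tfrac{4}{N}\}$ to handle exactly these degeneracies: $p'(0)\ne 0$ guarantees that the limit equation is genuinely logarithmic rather than linear, while $p'(0)\ne\tfrac{4}{N}$ prevents matching the critical scaling $2^*_s=2+\tfrac{4s}{N}+o(s)$, which would push the problem into the critical regime of the log-Sobolev inequality and compromise the compactness in step~(a).
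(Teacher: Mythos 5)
Your overall architecture matches the paper's: fix a test function, project to $\cN_{s_k}$ and use Lemma~\ref{lem:t0} to get $\|u_{s_k}\|_{s_k}\le C_0$; upgrade to an $\mH(\Omega)$-bound; use the compact embedding $\mH(\Omega)\hookrightarrow L^2(\Omega)$; pass to the limit in the weak formulation by moving $(-\Delta)^{s_k}$ onto the test function and expanding both the operator and $|t|^{p_s-2}t$ to first order; then squeeze $\lim_k s_k^{-1}J_{s_k}(u_{s_k})$ between $c_0=\inf_{\cN_0}J_0$ from below (Fatou) and from above (projections of an approximating sequence for a least-energy solution of the limit problem, whose existence is Theorem~\ref{main:thm:2}). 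Your limit-passage step via $\int u_{s_k}((-\Delta)^{s_k}\varphi-\varphi)/s_k$ and the expansion~\eqref{exp:thm} is exactly the paper's argument, and Lemma~\ref{lem:L2conv} supplies the dominated-convergence step you label ``equi-integrability.''

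However, there is a genuine gap at the step you call the ``main obstacle,'' and the inequality you sketch for it would not close the argument. You propose deducing $\mH(\Omega)$-boundedness from $\|u_{s_k}\|_{s_k}\le C$ via an estimate of the shape $s^{-1}(\|u\|_s^2-|u|_2^2)\gtrsim\cE_L(u,u)-C|u|_2^2$. The trouble is that $\|u_{s_k}\|_{s_k}\le C$ together with $|u_{s_k}|_2\le C$ gives no control whatsoever on $s_k^{-1}(\|u_{s_k}\|_{s_k}^2-|u_{s_k}|_2^2)$; a family with $\|u\|_s^2-|u|_2^2\sim s^{1/2}$ satisfies both bounds while this quotient blows up. The quantity you need to bound from above is precisely $\cI:=s^{-1}(\|u\|_s^2-|u|_2^2)$, and for that one must invoke the Nehari identity $\|u\|_s^2=|u|_{p_s}^{p_s}$ to rewrite $\cI$ as $\int_0^1 p'(s\tau)\int_\Omega|u|^{p(s\tau)}\ln|u|\,d\tau$, and then feed this into an $s$-uniform, \emph{differentiated} version of the Sobolev inequality --- the paper's Lemma~\ref{prebd}, obtained by integrating $G'$ for $G(s)=k(s)\|u\|_s^{2^*_s}-|u|_{2^*_s}^{2^*_s}\ge 0=G(0)$. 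The resulting coefficient $\tfrac{N}{4}\kappa_{N,s\tau}^{2^*_{s\tau}/2}\sup|p'|<1$ (for small $s$, precisely because $p'(0)<\tfrac{4}{N}$) is what allows absorbing the $\cE_L$-term back to the left; the limiting log-Sobolev inequality~\eqref{si:intro} alone cannot do this, because it holds only in the limit while you need a uniform-in-$s$ estimate. Your diagnosis of \emph{why} $p'(0)\ne\tfrac{4}{N}$ is needed is correct, but the mechanism is this absorption, not a vague critical-scaling obstruction.

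A secondary issue: your nontriviality argument (show $\|u_{s_k}\|_{s_k}^2=|u_{s_k}|_{p_{s_k}}^{p_{s_k}}\to|u_0|_2^2$) is partially circular, since that convergence is one of the conclusions~\eqref{energies} and you do not indicate how to control $\int u_{s_k}^2(|u_{s_k}|^{p_{s_k}-2}-1)\,dx$ independently of it. The paper avoids this by interpolating $|u_{s_k}|_{p_{s_k}}^{p_{s_k}}\le|u_{s_k}|_2^{2(1-\lambda_k)}|u_{s_k}|_{2^*_{s_k}}^{2^*_{s_k}\lambda_k}$ with $\lambda_k\to p'(0)\tfrac{N}{4}\in(0,1)$, combining this with the uniform lower bound of Lemma~\ref{lem:Ebd} to get $|u_0|_2>0$ directly. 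This is both cleaner and another place the subcriticality hypothesis earns its keep.

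Finally, note that your upper bound $\limsup_k J_{s_k}(u_{s_k})/s_k\le c_0$ is asserted from projecting a single $v\in C_c^\infty(\Omega)$ and then implicitly taking an infimum over $v$. To make this rigorous you need to know $c_0$ is attained (Theorem~\ref{main:thm:2}) and approximate the minimizer $v$ by $C_c^\infty(\Omega)$ functions, as the paper does with the sequence $(v_n)$ --- the existence theorem for the limit problem is not optional decoration here but a structural input.
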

 Note that~\eqref{p:hyp} includes the paradigmatic case 
\begin{align*}
p_s=p(s):=\lambda\, 2^*_s+(1-\lambda)\, 2\qquad \text{ for some $\lambda\in(0,1)$.}     
\end{align*}
The condition $p_s<2^*_s$ is needed to guarantee the existence of solutions (for example, if $p_s=2^*_s$, the solvability of \eqref{subcritical:intro} depends on the domain $\Omega$; see \emph{e.g.} \cite{HSS21} for some existence and nonexistence results for \eqref{subcritical:intro} with $p_s=2^*_s$ for any $s>0$). An interesting phenomenon is that, for~\eqref{log:prob:intro}, the ``logarithmic subcriticality" is reflected on the scalar factor $\mu$, where the fact that $\mu=p'(0)<\frac{4}{N}$ is crucial in our arguments to obtain compactness. See Remark \ref{open:rmk} for a discussion on the case $p'(0)\in\{0,\frac{4}{N}\}$.

One of the main obstacles to show Theorem~\ref{main:thm} is to find a uniform bound in $\mH(\Omega)$ for the sequence of solutions $(u_{s_k})_{k\in\N}$. To show this, we use an ``intermediate inequality" (see Lemma~\ref{prebd}) between the fractional Sobolev inequality and the logarithmic Sobolev inequality together with a series of delicate energy-derived uniform bounds. Furthermore, the existence of a least-energy solution of the limiting problem~\eqref{log:prob:intro} is also important.  In particular, we show the following. 

\begin{theorem}\label{main:thm:2}Let $N\geq 1$ and let $\Omega\subset \R^N$ be a bounded open Lipschitz set. For every $\mu\in(0,\frac{4}{N})$, the problem
 \begin{align}\label{lambda:problem}
  L_\Delta u = \mu\ln(|u|)u\quad \text{ in }\Omega,\qquad u_0=0\quad \text{ in }\R^N\backslash \Omega,
 \end{align}
 has a least-energy solution $u\in \mH(\Omega)\backslash\{0\}$ and
 \begin{align}\label{mountain:pass}
 J_0(u)=\inf_{\cN_0} J_0 = \inf_{\sigma\in \mathcal T}\max_{t\in[0,1]}J_0(\sigma(t))>0,
 \end{align}
 where $\mathcal T:=\{\sigma\in C^0([0,1],\mathbb H(\Omega)): \sigma(0)=0, \sigma(1)\neq 0, J_0(\sigma(1)\leq 0)\}$.
 Furthermore, all least-energy solutions of \eqref{lambda:problem} do not change sign in $\Omega$.
\end{theorem}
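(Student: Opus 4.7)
\textbf{Proof proposal for Theorem \ref{main:thm:2}.}

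The plan is to apply direct variational methods on the Nehari manifold associated with \eqref{lambda:problem}. The energy functional is
\[
J_0(u)=\tfrac{1}{2}\cE_L(u,u)-\tfrac{\mu}{2}\int_\Omega u^2\ln|u|\,dx+\tfrac{\mu}{4}\int_\Omega u^2\,dx,
\]
where $\cE_L$ denotes the quadratic form of $L_\Delta$ on $\mH(\Omega)$. First I would verify that $J_0\in C^1(\mH(\Omega),\R)$, using the log-Sobolev inequality of \cite{CW19} to control $\int u^2\ln|u|$. A short computation shows that for every $u\neq 0$ the fibering map $t\mapsto J_0(tu)$ admits a unique critical point $t(u)>0$, which is a strict global maximum (it satisfies $\mu|u|_2^2\ln t(u)=\cE_L(u,u)-\mu\int u^2\ln|u|$). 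Hence $\cN_0=\{u\neq 0:J_0'(u)u=0\}$ is a natural $C^1$ manifold and $J_0(u)=\tfrac{\mu}{4}|u|_2^2$ on $\cN_0$.

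The core analytic step is to obtain simultaneous bounds on $\cN_0$ of the form $|u|_2^2\geq \underline c>0$ and $\cE_L(u,u)\leq \overline C$. Combining the Nehari identity $\cE_L(u,u)=\mu\int u^2\ln|u|$ with a Beckner-type inequality of the form
\[
\int_\Omega u^2\ln\!\Big(\frac{u^2}{|u|_2^2}\Big)\,dx\leq \tfrac{N}{2}|u|_2^2\ln\!\Big(C\,\tfrac{\cE_L(u,u)+|u|_2^2}{|u|_2^2}\Big)
\]
(the natural logarithmic Sobolev companion of $L_\Delta$) yields an a priori inequality in which, thanks to $\mu N/4<1$, the coefficient of $\ln\cE_L(u,u)$ on the right is strictly less than $1$. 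Working with $v=u/|u|_2$ and the explicit formula for $|u|_2$ from the Nehari identity above, this both prevents $|u|_2$ from collapsing to $0$ on $\cN_0$ (so $c:=\inf_{\cN_0}J_0>0$) and bounds $\cE_L(u,u)$ from above for any minimizing sequence. I expect this to be the main obstacle; the restriction $\mu\in(0,4/N)$ plays the role of the subcriticality condition $p<2^*_s$ for the logarithmic problem, and this inequality is presumably the ``new logarithmic-type Sobolev inequality'' advertised in the abstract.

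With a bounded minimizing sequence $(u_n)\subset\cN_0$, the compact embedding $\mH(\Omega)\hookrightarrow L^2(\Omega)$ yields $u_n\weakly u$ in $\mH(\Omega)$ and $u_n\to u$ in $L^2(\Omega)$; the uniform lower bound on $|u_n|_2$ forces $u\not\equiv 0$. Uniform integrability of $u_n^2\ln|u_n|$, again through the log-Sobolev inequality, lets me pass to the limit in the nonlinear term; weak lower semicontinuity of $\cE_L$ combined with the strict-maximum property of the fibering map shows that $u\in\cN_0$ with $J_0(u)=c$ (otherwise, projecting $u$ onto $\cN_0$ would produce an element with energy strictly below $c$). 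A Lagrange-multiplier computation along $\cN_0$ then promotes $u$ to a critical point of $J_0$ in $\mH(\Omega)$, hence a weak solution of \eqref{lambda:problem}.

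For the sign property, I would verify $J_0(|u|)\leq J_0(u)$: the $L^2$ term and the nonlinear term depend only on $|u|$, while the estimate $\bigl||a|-|b|\bigr|\leq|a-b|$ applied to the singular-integral part of $L_\Delta$ (together with a suitable control on the far-field part) gives $\cE_L(|u|,|u|)\leq\cE_L(u,u)$; hence, after rescaling onto $\cN_0$, $|u|$ is also a least-energy solution. The strong maximum principle for $L_\Delta$ from \cite{CW19} then yields $|u|>0$ in $\Omega$, and the decomposition $\cE_L(u,u)\geq \cE_L(u^+,u^+)+\cE_L(u^-,u^-)$ combined with the sub-additivity $J_0(u)\geq J_0(u^+)+J_0(u^-)$ rules out sign-changing least-energy solutions. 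Finally, the mountain pass equality \eqref{mountain:pass} follows from the standard Nehari--mountain pass equivalence: given $u\in\cN_0$, the path $\sigma(t)=tT_0u$ for $T_0$ large enough that $J_0(T_0u)\leq 0$ lies in $\mathcal T$ and realizes $\max_{t\in[0,1]}J_0(\sigma(t))=J_0(u)$ by uniqueness of the fibering maximum, giving $\inf_{\mathcal T}\max J_0\leq c$; conversely, any $\sigma\in\mathcal T$ must cross $\cN_0$ (since $J_0(\sigma(0))=0$ and $J_0(\sigma(1))\leq 0$ force the fibering projection to be attained at some $\tau\in(0,1)$), yielding $\max_{t\in[0,1]}J_0(\sigma(t))\geq c$.
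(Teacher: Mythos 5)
Your outline correctly identifies all the right ingredients: the fibering map $t\mapsto J_0(tu)$ with its unique maximum $t(u)$ (which is the paper's Lemma~\ref{lem:A1A2}), the lower bound $|u|_2\geq c_0$ and the $\mH$-bound on $\cN_0$ via the logarithmic Sobolev inequality and $\mu N/4<1$ (Lemma~\ref{lem:bound_below_nehari} and Proposition~\ref{prop:bd}), the compact embedding into $L^2$, and the mountain pass equivalence (Lemma~\ref{mp:lem:ln}). The sign argument via $\cE_L(|u|,|u|)\leq\cE_L(u,u)$ and projection is also essentially what the paper does.

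However, there is a genuine gap at the step where you pass from a bounded minimizing sequence to $u\in\cN_0$. You invoke ``uniform integrability of $u_n^2\ln|u_n|$'' and ``weak lower semicontinuity of $\cE_L$ combined with the strict-maximum property of the fibering map.'' This is the usual Nehari argument, but it requires the weak continuity of the nonlinear term $I(u)=\tfrac{\mu}{4}\int u^2(\ln u^2-1)$ along $u_n\to u$ in $L^2$. That is precisely what fails here, and the paper flags it explicitly (Remark~\ref{diff:rmk}.2): the embedding $\mH(\Omega)\hookrightarrow L^2(\Omega)$ does not give $L^q$ for any $q>2$, and a bound on $\cE_L(u_n,u_n)$ combined with the log-Sobolev inequality only yields an $L^1$ bound on $u_n^2\ln u_n^2$, not equi-integrability. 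So you cannot conclude $\int u_n^2\ln|u_n|\to\int u^2\ln|u|$ this way, and hence neither ``$u\in\cN_0$ with $J_0(u)=c$'' nor the claim that $u$ solves the equation is justified by your route. The paper circumvents this by obtaining a genuine Palais--Smale-type sequence via Ekeland's variational principle (with a nonvanishing Lagrange multiplier thanks to $\Psi'(u_n)u_n=-\mu|u_n|_2^2<0$), and then passes to the limit in the \emph{weak formulation} tested only against fixed $\varphi\in C^\infty_c(\Omega)$, where the majorant in $L^2$ from $u_n\to u$ in $L^2$ together with the loss of one power of $u_n$ to the test function makes dominated convergence work (Lemma~\ref{lem:L2conv}); membership of $u_0$ in $\cN_0$ is then recovered separately using the density of $C^\infty_c(\Omega)$ in $\mH(\Omega)$ and the continuity of $I'$ from Lemma~\ref{I:C1}. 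This two-step indirect passage to the limit is exactly the new idea your proposal is missing.

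One further correction: you assert ``the strong maximum principle for $L_\Delta$ from \cite{CW19} then yields $|u|>0$ in $\Omega$.'' In fact strong maximum principles for $L_\Delta$ do \emph{not} hold in general (see \cite[Theorem~1.8]{CW19}), which is why Theorem~\ref{main:thm:2} only asserts that least-energy solutions are sign-constant, not strictly positive. Your decomposition argument (or, more simply, the paper's argument via the strict inequality in $\cE_L(|u_0|,|u_0|)\leq\cE_L(u_0,u_0)$ when $u_0$ changes sign) suffices to prove that; the strict positivity claim should be dropped.
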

Here $J_0$ is defined in~\eqref{J0def} and $\cN_0$ is the \emph{Nehari manifold} associated to~\eqref{lambda:problem} (see~\eqref{N0:def} below). Equation~\eqref{mountain:pass} shows that the problem~\eqref{lambda:problem} also has a mountain-pass structure.  The Nehari manifold method is our main variational tool to show existence of solutions. The implementation of this approach, however, faces several difficulties in this setting. See Remark~\ref{diff:rmk} below for a discussion of some important differences between~\eqref{subcritical:intro} and~\eqref{lambda:problem}.  The most relevant obstacle to show Theorem~\ref{main:thm:2} is that the convergence of energy-minimizing sequences cannot be guaranteed by compact Sobolev embeddings. In particular, for the Hilbert space $\mH(\Omega)$ (defined in~\eqref{Hdef}), it is only known that
\begin{align}\label{comp:em}
\mathbb H(\Omega) \hookrightarrow L^2(\Omega)    \quad \text{ is compact}
\end{align}
(see \cite[Theorem 2.1]{CdP18} or \cite[Corollary 2.3]{LW20}), which alone is not enough to articulate an existence proof.

To compensate the loss of compactness we use a series of logarithmic inequalities.  Most prominently, the logarithmic Sobolev inequality
\begin{align}\label{si:intro}
  \frac{2}{N}\int_\Omega \ln(u^{2})|u|^2
  \leq\cE_L(u,u)+\frac{2}{N}\left(\ln\left(\int_\Omega|u|^2\right)+a_N\right)\int_\Omega|u|^2,\qquad u\in \mH(\Omega),
\end{align}
plays a crucial role.  The above inequality is a reformulation of \cite[Theorem~3]{B95} and it appears as a first order expansion of the standard Sobolev inequality at $s=0$ (see the proof of Proposition~\ref{log:prop} below).

As mentioned before, the notion of ``subcriticality'' of~\eqref{lambda:problem} is encoded in the assumption that $\mu<\frac{4}{N}$. Indeed, this fact, together with~\eqref{comp:em} and~\eqref{si:intro}, allows to obtain some compactness of energy minimizing sequences, see Proposition~\ref{prop:bd}.

\medskip

Theorem~\ref{main:thm} can be seen as a nonlinear analog of the results in \cite{FJW20}, where the asymptotic profile of $L^2$-normalized Dirichlet eigenfunctions is studied as $s\to 0^+$.  In particular, in \cite{FJW20} it is shown that, if $(s_n)\subset(0,\frac{1}{4})$ is such that $\lim_{n\to 0}s_n=0$, then, up to a subsequence,
\begin{align}\label{cLp}
\text{$\varphi_{k,s_n}\to \varphi_{k,L}$ in $L^p(\Omega)$ as $n\to\infty$ for any $p\in[1,\infty)$,}    
\end{align}
where $\varphi_{k,s_n}$ and $\varphi_{k,L}$ are the $k$-th eigenfunctions of $(-\Delta)^{s_n}$ and $L_\Delta$, respectively. Although the methods in \cite{FJW20} yield stronger results (compare, for instance, \eqref{cL2} and~\eqref{cLp}), they rely heavily on the linearity of the eigenvalue problem and therefore key arguments in \cite{FJW20} do not have a direct extension to the nonlinear setting. 
 
As far as we know, Theorem~\ref{main:thm:2} is the first result regarding existence of solutions to boundary value nonlinear problems involving the logarithmic Laplacian $L_\Delta$. Let us mention some previously known results regarding logarithmic operators.  The Dirichlet linear problem for $L_\Delta$ has been studied in \cite{CW19}, which also contains a Faber-Krahn type inequality and maximum principles in weak and strong forms. Further studies on the spectral properties (including sharp upper bounds for the Riesz means and lower bounds for the first Dirichlet eigenvalue of $L_\Delta$) can be found in \cite{LW20}; see also \cite{CV20} for upper and lower bounds for the sum of the first $k$ eigenvalues of $L_\Delta$. The Dirichlet problem for a logarithmic Schr\"odinger operator is considered in \cite{Feu21} and a \emph{regional logarithmic Laplacian} (associated to the regional fractional Laplacian in the small order limit) is studied in \cite{TW21}.

The logarithmic Laplacian also arises in the geometric context of the $0$-fractional perimeter, which has been studied recently in \cite{CdLNP21}.  In \cite{FKT20} an equation similar to~\eqref{lambda:problem} is considered with $\mu=\frac{4}{N}$ on the sphere and $L_\Delta$ is substituted with a conformally invariant logarithmic operator; in particular, the authors in \cite{FKT20} classify all nonnegative solutions of this equation, which are extremals of a Sobolev logarithmic inequality on the sphere presented in \cite[Theorem~3]{B95}.  From the probabilistic point of view, we mention that geometric stable stochastic processes with a logarithmic operator as an infinitesimal generator are studied in \cite{KM13}. Logarithmic-type energies also appear in the recent paper \cite{DCKNP21}, where the asymptotic analysis for the $s$-fractional heat flow as $s\to 0^+$ (and as $s\to 1^-$) is examined. 

Let us also comment on similar results to Theorem~\ref{main:thm} whenever $s_k\not\to 0$. If $s_k\to 1^-$, then a study of the nonlocal-to-local transition can be found in \cite{FBS20} (in the more general context of the fractional $p$-Laplacian).  See also \cite{BS19} and \cite{BS20} for convergence results in bounded and unbounded domains for subcritical Schrödinger equations. In the critical case, the only available result, as far as we know, is \cite{HSS21}, where convergence of solutions (in bounded and unbounded domains) is studied whenever $s_k\to s_0$ for any $s_0>0$ (including the higher order case $s_0>1$). In this setting, convergence of solutions (up to a subsequence) can be guaranteed in the $\cH^{s_0-\delta}_0$-sense for any $\delta\in(0,s_0)$.  Finally, if $v_s\in \cH^s_0(\Omega)$ is a solution of the fractional Poisson problem $(-\Delta)^sv_s=f$ in some bounded open set $\Omega$, then much more is known about the mapping $s\mapsto v_s$. In particular, in \cite{JSW20}, the logarithmic Laplacian $L_\Delta$ is used to characterize the continuity, differentiability, and monotonicity properties of the solution mapping $s\mapsto v_s$ for $s\in[0,1)$, where the case $s=0$ was previously studied in \cite{CW19}.

\medskip

The paper is organized as follows. In Section \ref{not:sec} we present the relevant definitions for the study of least-energy solutions. In Section \ref{aux:sec} we derive some Taylor expansions for the quadratic forms, give a full proof of the logarithmic Sobolev inequality \eqref{si:intro}, and show that $J_0$ is a $C^1$ functional. The uniform bounds for least-energy solutions are obtained in Section \ref{U:sec}. Section \ref{Sec:MT2} is devoted to the proof of Theorem \ref{main:thm:2} and Section \ref{6:sec} contains the proof of our main result Theorem \ref{main:thm}. Finally, in Section \ref{c:rmks} we include some closing remarks.

\section{Definitions and notations}\label{not:sec}

Let $\Omega\subset \R^N$ be an open bounded Lipschitz set. 
For $p\in[1,\infty]$ we use $L^p(\Omega)$ to denote the standard Lebesgue spaces with the norms
\begin{align*}
 |u|_p:=\left(\int_{\Omega}|u|^p\ dx\right)^{\frac{1}{p}}\quad\text{ for }p<\infty\qquad \text{ and }\qquad |u|_\infty:=\sup_{\Omega}|u|.
\end{align*}

The natural Hilbert space associated to~\eqref{subcritical:intro} is
\begin{align}\label{Hs:def}
\cH^s_0(\Omega):=\{u\in H^s(\R^N)\::\: u=0\text{ in }\R^N\backslash \Omega\},
\end{align}
where $H^s(\R^N)$ is the usual fractional Sobolev space.  We say that $u_s\in \cH^s_0(\Omega)$ is a (weak) solution of~\eqref{subcritical:intro} if 
\begin{align}\label{ws:intro}
 \cE_s(u_s,\varphi)=\int_\Omega |u_s|^{p_s-2}u_s\varphi\ dx\qquad\text{ for all } \varphi\in \cH^s_0(\Omega),
\end{align}
where
\begin{align*}
\cE_s(u,v)&:=\left(\frac{c_{N,s}}{2} \int_{\R^N}\int_{\R^N}\frac{(u(x)-u(y))(v(x)-v(y))}{|x-y|^{N+2s}}\ dxdy \right)^\frac{1}{2}
\end{align*}
is a scalar product in the Hilbert space $\cH^s_0(\Omega)$ with norm $\|u\|_s:=\cE(u,u)^\frac{1}{2}$. The \emph{energy functional} associated to~\eqref{subcritical:intro} is given by
\begin{align}\label{Js:def}
J_s:\cH^s_0(\Omega)\to\R,\qquad 
    J_s(u):=\frac{1}{2}\|u\|_s^2-I_s(u),\qquad I_s(u):=\frac{|u|_{p_s}^{p_s}}{p_s}=\frac{1}{p_s}\int_\Omega|u|^{p_s}\, dx.
\end{align}
Note that all nontrivial solutions of~\eqref{subcritical:intro} belong to the set
\begin{align}\label{cNs:def}
\cN_s:=\left\{u\in \cH^s_0(\Omega)\backslash\{0\}\::\: \|u\|_s^2 = |u|_{p_s}^{p_s}\right\}.
\end{align}
Then, a solution $u\in \cN_s$ is a \emph{least-energy solution} of~\eqref{subcritical:intro} if
\begin{align}\label{les}
J_s(u)=\inf_{v\in \cN_s}J_s(v),\quad \text{or, equivalently,}\quad 
 \|u\|_s^2
 =\inf_{v\in \cN_s} \|v\|^2_s,
\end{align}
see Theorem~\ref{existence}.

\medskip

On the other hand, the natural Hilbert space for the problem~\eqref{lambda:problem} is
\begin{align}\label{Hdef}
 \mH(\Omega):=\left\{
 u\in L^2(\R^N)\::\: \iint_{\substack{x,y\in\R^N\\|x-y|\leq 1}}
 \frac{|u(x)-u(y)|^2}{|x-y|^N}\ dx\,dy<\infty\text{ and }u=0\text{ in }\R^N\backslash \Omega
 \right\}
 \end{align}
with the scalar product
\begin{align}\label{cN:def}
  \cE(u,v):=\frac{c_N}{2}\iint_{\substack{x,y\in\R^N\\|x-y|\leq 1}}
 \frac{(u(x)-u(y))(v(x)-v(y))}{|x-y|^N}\ dx\,dy,\qquad c_N:=\pi^{-\frac{N}{2}}\Gamma\left(\tfrac{N}{2}\right)>0,
\end{align}
and the norm $\|u\|:=\left(\cE(u,u)\right)^\frac{1}{2}.$  The space of smooth functions with compact support in $\Omega$, denoted $C^\infty_c(\Omega)$, is dense in $\mH(\Omega)$, see \cite[Theorem 3.1]{CW19}.

The operator $L_\Delta$ has the following associated quadratic form
\begin{align}\label{cE}
 \cE_L(u,v):=\cE(u,v)-c_N\iint_{\substack{x,y\in\R^N\\|x-y|\geq 1}}\frac{u(x)v(y)}{|x-y|^N}\ dxdy+\rho_N\int_{\R^N} uv\ dx,
\end{align}
where 
\begin{align}\label{rho:def}
\rho_N:=2\ln 2+\psi(\tfrac{N}{2})+\gamma,\qquad \gamma:=-\Gamma'(1).
\end{align}
Here the constant $\gamma$ is known as the Euler-Mascheroni constant and $\psi:=\frac{\Gamma'}{\Gamma}$ is the digamma function. By \cite{CW19} (see also \cite{FJW20}), it holds that
\begin{align}\label{fou}
\cE_L(u,u)=\int_{\R^N}2\ln|\xi||\widehat u(\xi)|^2\ d\xi\qquad \text{ for all }u\in C^\infty_c(\Omega),
\end{align}
where $\widehat u$ denotes the Fourier transform of $u$ given by
\begin{align*}
    \widehat u(\xi)=\frac{1}{(2\pi)^\frac{N}{2}}\int_{\R^N}e^{-ix\cdot \xi}u(x)\ dx,\qquad \xi\in\R^N.
\end{align*}
Let $\mu\in(0,\frac{N}{4})$. We say that $u\in\mathbb H(\Omega)$ is a (weak) solution of~\eqref{lambda:problem} if 
\begin{align*}
    \cE_L(u,v)=\mu\int_\Omega uv\ln|u|\ dx\qquad \text{ for all }v\in \mH(\Omega).
\end{align*}
The \emph{energy functional} associated to~\eqref{lambda:problem} is given by
\begin{align}\label{J0def}
J_0:\mH(\Omega)\to\R,\qquad 
    J_0(u):=\frac{1}{2}\cE_L(u,u)-I(u),\qquad I(u):=\frac{\mu}{4}\int_\Omega u^2(\ln(u^2)-1)\ dx.
\end{align}
The functional $J_0$ is well defined in virtue of the logarithmic Sobolev inequality (see Proposition~\ref{log:prop}). Moreover, we show in Subsection \ref{difsubsec} that $J_0$ is of class $C^1$ in $\mH(\Omega)$.

All nontrivial solutions of~\eqref{subcritical:intro} belong to the set
\begin{align}\label{N0:def}
\cN_0:=\left\{u\in \mH(\Omega)\backslash\{0\}\::\: \cE_L(u,u) = \mu\int_\Omega u^2\ln|u|\ dx\right\}.
\end{align}

A solution $u\in \cN_0$ is a \emph{least-energy solution} of~\eqref{lambda:problem} if
\begin{align}\label{e2}
J_0(u) =\inf_{v\in \cN_0}J_0(v)\quad \text{ or, equivalently,}\quad |u|_2^2=\inf_{v\in \cN_0}|v|_2^2.
\end{align}

\begin{remark}\label{diff:rmk} Let us point out some important differences between the study of \eqref{subcritical:intro}  and \eqref{lambda:problem}.
\begin{enumerate}
    \item The continuity and differentiability of $J_s$ is a direct consequence of Sobolev embeddings and Hölder's inequality. This is not the case for $J_0$. Note that Hölder's inequality cannot be used to bound $I$ (the nonlinear part of $J_0$). Therefore, it is not immediate that $J_0$ is of class $C^1$ (or even defined) in $\mH(\Omega)$. We show in the next section that $J_0$ is in fact $C^1$ using a different argument. 
    \item Note that $I_s$ defined in~\eqref{Js:def} is weakly continuous by the compactness of the Sobolev embedding $\cH^s_0(\Omega)\hookrightarrow L^{p_s}(\Omega)$. This plays an important role to show the existence of solutions (see Theorem~\ref{existence}). But the embedding~\eqref{comp:em} is not enough to guarantee that $I$ is weakly continuous and this is one of the main technical obstacles to show Theorem~\ref{main:thm:2}.
    \item In~\eqref{N0:def} note that $\cE_L(u,u)$ and $\int_\Omega u^2\ln|u|\ dx$ could be negative or zero even if $u\neq 0$, whereas in~\eqref{cNs:def} the fact that $\|u\|_s$ and $|u|_{p_s}$ are positive for $u\neq 0$ provides the natural projection 
    \begin{align*}
        \left(\frac{\|u\|_s^2}{|u|_{p_s}^{p_s}}\right)^\frac{1}{p_s-2}u\in\cN_s\qquad \text{ for all }u\in\cH_0^s(\Omega)\backslash\{0\}.
    \end{align*}
    We show in Lemma \ref{lem:A1A2} that the projection to $\cN_0$ has to be given in terms of an exponential function.
        \item In~\eqref{les} the least-energy solution is characterized in terms of the norm in $\cH^s_0(\Omega)$, whereas in~\eqref{e2} we only have information about the norm in $L^2(\Omega)$. This implies that an energy bound yields a weaker control on the minimizing sequences for $J_0$; therefore, additional new arguments are needed to improve these estimates, see Proposition \ref{prop:bd}. 
\end{enumerate}
\end{remark}

\section{Auxiliary results}\label{aux:sec}

\subsection{Asymptotics for the best Sobolev constant}

\begin{lemma}\label{lem:lim}
If $a\neq 0$, then
\begin{align}\label{lim0}
 \lim_{s\to 0^+}\left(
 1+sa+o(s) \right)^{\frac{1}{s}}=e^{a}=\lim_{s\to 0^+}\left(
 1+sa \right)^{\frac{1}{s}}.
\end{align}
\end{lemma}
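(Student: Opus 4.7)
The plan is to reduce both limits to a standard exponential/logarithm computation by taking logarithms and applying the Taylor expansion of $\ln(1+x)$ near $x=0$, then invoking continuity of the exponential function.

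More concretely, for the second equality I would set $f(s):=\frac{1}{s}\ln(1+sa)$ for $s$ small enough that $1+sa>0$. Since $\ln(1+x)=x-\tfrac{x^2}{2}+O(x^3)$ as $x\to 0$, substituting $x=sa$ gives $f(s)=a-\tfrac{sa^2}{2}+O(s^2)\to a$ as $s\to 0^+$. Exponentiating and using the continuity of $\exp$ yields $(1+sa)^{1/s}=e^{f(s)}\to e^a$.

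For the first equality, write the argument as $1+sa+o(s)=1+s(a+r(s))$ where $r(s)\to 0$ as $s\to 0^+$. In particular, for $s$ small the quantity $sa+o(s)$ tends to $0$, so the same Taylor expansion applies and gives
\begin{align*}
\frac{1}{s}\ln\bigl(1+s(a+r(s))\bigr)=(a+r(s))-\frac{s(a+r(s))^2}{2}+O\bigl(s^2(a+r(s))^3\bigr)\longrightarrow a,
\end{align*}
from which exponentiation yields $\bigl(1+sa+o(s)\bigr)^{1/s}\to e^a$.

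There is no real obstacle here; the only thing worth checking is that the argument $1+sa+o(s)$ is positive for $s$ sufficiently close to $0^+$ (which follows since the argument converges to $1$), so that the logarithm is well defined. The hypothesis $a\neq 0$ is only needed to ensure $e^a\neq 1$ so that the statement is nontrivial; the proof itself does not require it.
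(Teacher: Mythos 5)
Your proof is correct, but it takes a genuinely different route from the paper's. You take logarithms, apply the Taylor expansion $\ln(1+x)=x-x^2/2+O(x^3)$, show that $\frac{1}{s}\ln(1+sa+o(s))\to a$, and conclude by continuity of $\exp$. The paper instead works directly with the function $f(x)=(1+sx)^{1/s}$: it writes the difference $(1+s(a+o(1)))^{1/s}-(1+sa)^{1/s}=f(a+h)-f(a)$ with $h=o(1)$, expresses this via the fundamental theorem of calculus as $h\int_0^1 f'(a+\tau h)\,d\tau$, and produces a uniform-in-$s$ bound on the integrand to deduce $|f(a+h)-f(a)|\le Ch\to 0$, i.e.\ that the two sequences have the same limit, after which the second limit is standard. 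Your logarithmic argument is shorter and more elementary and avoids the uniform Lipschitz estimate; the paper's approach is a direct estimate that does not require passing through $\ln$ and $\exp$, but it is somewhat heavier for a result this simple. Both establish the claim, and your observation that $a\neq 0$ is not actually needed for the limit (only to make the statement nontrivial) is also correct.
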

\begin{proof}
We claim that
\begin{align}\label{lim}
 \lim_{s\to 0^+}\left(
 1+s(a+o(1)) \right)^{\frac{1}{s}}-\left(
 1+sa \right)^{\frac{1}{s}}=0.
\end{align}
For $s\in(0,1)$, let $f:\R\to\R$ be given by $f(x)=(1+sx)^\frac{1}{s}$, then $f'(x)=(1+sx)^{\frac{1}{s}-1}$ and
\begin{align*}
 f(x+h)-f(x)
&= h\int_0^1 (1+s (x+\tau h))^{\frac{1}{s}-1}\ d\tau
= h\int_0^1 (s\tau h + sx+1)^{\frac{1}{s}-1}\ d\tau;
\end{align*}
but, since $|h|<1$, $\tau<1$, and $s\in(0,1)$,
\begin{align*}
 \left|\int_0^1 (s\tau h + sx+1)^{\frac{1}{s}-1}\ d\tau\right|
 &\leq  \frac{|s(1 +|x|)+1|^{\frac{1}{s}}}{|s(1 +|x|)+1|}=e^{1+|x|}+o(1)<C\quad \text{as $s\to 0^+$,}
\end{align*}
where $C>0$ is independent of $s\in(0,1)$.  Therefore, for any $x\in \R$, $s\in(0,1)$, and $|h|<1$, $|f(x+h)-f(x)| \leq Ch.$ This implies~\eqref{lim} and therefore~\eqref{lim0} holds.
\end{proof}

The next theorem is the fractional Sobolev inequality, see for example \cite[Theorem~1.1]{CT04}.

 \begin{theorem}[Fractional Sobolev inequality]\label{thm:sobolev} 
 Let $N\geq 1$, $s\in(0,\frac{N}{2})$, and $2^*_s:=\frac{2N}{N-2s}$. Then
 \begin{align*}
 |u|^2_{2^*_s}\leq \kappa_{N,s}\|u\|^2_{s}\qquad \text{ for all $u\in H^s(\R^N),$}
 \end{align*}
 where
\begin{equation}\label{eq:best_constant}
\kappa_{N,s}=2^{-2s}\pi^{-s}\frac{\Gamma(\frac{N-2s}{2})}{\Gamma(\frac{N+2s}{2})} \left(\frac{\Gamma(N)}{\Gamma(\frac{N}{2})}\right)^{\frac{2s}{N}}.
\end{equation}
\end{theorem}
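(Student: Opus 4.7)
The plan is to derive this as a corollary of Lieb's sharp Hardy--Littlewood--Sobolev (HLS) inequality, via the Riesz potential and Plancherel's theorem. The fractional Sobolev inequality $|u|_{2^*_s}\leq \kappa_{N,s}^{1/2}\|u\|_s$ is equivalent to the statement that the Riesz potential $I_s=(-\Delta)^{-s/2}$ maps $L^2(\R^N)$ boundedly into $L^{2^*_s}(\R^N)$, which is a boundary case of HLS.

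First, I would use Plancherel to rewrite the norm. By the definition of $\cE_s$ together with the standard Fourier representation of $(-\Delta)^s$, one has
$$\|u\|_s^2 = c_{N,s}''\int_{\R^N}|\xi|^{2s}|\widehat u(\xi)|^2\,d\xi$$
for an explicit constant $c_{N,s}''$ coming from $c_{N,s}$ and the Fourier-transform conventions. Setting $\phi:=(-\Delta)^{s/2}u$, one has $|\phi|_2^2=\|u\|_s^2$ (up to that constant) and $u=I_s*\phi$, where $I_s$ is the Riesz kernel with Fourier symbol $|\xi|^{-s}$ and pointwise profile proportional to $|x|^{s-N}$.

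Next, I would invoke the sharp HLS inequality of Lieb in its symmetric form
$$\left|\int_{\R^N}\int_{\R^N}\frac{f(x)g(y)}{|x-y|^{N-2s}}\,dx\,dy\right|\leq C_{N,s}\,|f|_{2N/(N+2s)}\,|g|_{2N/(N+2s)},$$
with $C_{N,s}$ the sharp constant (computable in terms of $\Gamma$ functions) and with extremals given, up to translation, dilation and scaling, by the Aubin--Talenti profiles $U(x)=(1+|x|^2)^{-(N-2s)/2}$. By duality this is equivalent to the mapping estimate $|I_s*\phi|_{2^*_s}\leq K_{N,s}\,|\phi|_2$, which together with the identifications above yields the desired inequality. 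A complementary way to certify sharpness is to plug $U$ directly into both sides of the Sobolev inequality and evaluate the resulting Beta integrals, which forces equality.

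The main obstacle is purely bookkeeping: one must combine the normalization $c_{N,s}$ from the definition of $(-\Delta)^s$, the Fourier normalization of $I_s$, and the sharp HLS constant to recover precisely the expression for $\kappa_{N,s}$ displayed in~\eqref{eq:best_constant}. This collapse of factors is driven by the Legendre duplication formula $\Gamma(z)\Gamma(z+\tfrac12)=2^{1-2z}\sqrt{\pi}\,\Gamma(2z)$ and by the reflection-type identities for $\Gamma$, and the quickest check is the extremal computation just described, which simultaneously proves the inequality and identifies the constant.
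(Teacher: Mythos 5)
The paper does not prove this theorem: it is stated with a citation to Cotsiolis--Tavoularis \cite{CT04} and used as a black box. Your sketch---Plancherel to identify $\|u\|_s^2$ with a multiplier norm, the substitution $\phi=(-\Delta)^{s/2}u$, and then Lieb's sharp HLS inequality to get the boundedness of the Riesz potential $I_s\colon L^2\to L^{2^*_s}$, followed by $\Gamma$-function bookkeeping to recover $\kappa_{N,s}$---is exactly the standard argument and is in fact the route taken in the cited reference, so this is essentially the same proof. One small simplification you could note: with the paper's normalization $\cE_s(u,u)=\frac{c_{N,s}}{2}\iint\frac{|u(x)-u(y)|^2}{|x-y|^{N+2s}}$ and the Fourier convention in \eqref{fou}, the constant $c_{N,s}$ is chosen precisely so that $\|u\|_s^2=\int_{\R^N}|\xi|^{2s}|\widehat u(\xi)|^2\,d\xi$ exactly, i.e.\ your $c_{N,s}''=1$; this is implicitly used elsewhere in the paper (e.g.\ in the proofs of Lemmas \ref{lem:sig:u} and \ref{Es:exp:lem}) and removes one layer of the bookkeeping you flag as the main obstacle.
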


Observe that the best Sobolev constant $\kappa_{N,s}$ is well behaved as $s\to 0^+$, in fact, $\lim_{s\to 0^+}\kappa_{N,s}=1$ and
\begin{align}
\lim_{s\to 0^+}\kappa_{N,s}^{\frac{1}{s}}
&=\frac{1}{4\pi}\left(\frac{\Gamma(N)}{\Gamma(\frac{N}{2})}\right)^{\frac{2}{N}}
\lim_{s\to 0^+}\left(1+s\partial_s\frac{\Gamma(\frac{N-2s}{2})}{\Gamma(\frac{N+2s}{2})}\Big|_{s=0}+o(s) \right)^\frac{1}{s}
&=\frac{1}{4\pi}\left(\frac{\Gamma(N)}{\Gamma(\frac{N}{2})}\right)^{\frac{2}{N}}e^{-2\psi(\frac{N}{2})},\label{kappa:eq}
\end{align}
where we used Lemma~\ref{lem:lim}, Taylor's expansion, and the fact that
\begin{align*}
\partial_s\frac{\Gamma(\frac{N-2s}{2})}{\Gamma(\frac{N+2s}{2})}\Bigg|_{s=0}
=-\frac{\Gamma(\frac{N-2s}{2})( \psi(\frac{N-2s}{2})+\psi(\frac{N+2s}{2}) )}{\Gamma(\frac{N+2s}{2})}\Bigg|_{s=0}=-2\psi(\tfrac{N}{2}),
\end{align*}
where $\psi=\frac{\Gamma'}{\Gamma}$ is the digamma function.

\subsection{Bounds and expansions for \texorpdfstring{$\cE_s$}{Es} and \texorpdfstring{$\cE_L$}{EL}}

\begin{lemma}\label{ln:bd}
Let $\alpha$ and $\beta$ be such that $\beta>\alpha$, then
\begin{align*}
\ln(r^2)r^\alpha\leq \frac{2}{\beta-\alpha} r^\beta\qquad \text{ for all }r>1
\end{align*}
\end{lemma}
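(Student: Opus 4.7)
The inequality to prove reduces immediately to an elementary one-variable estimate. Dividing both sides by $2$ and writing $\gamma := \beta - \alpha > 0$, the claim is equivalent to
\begin{align*}
\ln(r) \leq \frac{1}{\gamma}\, r^{\gamma} \qquad \text{for all } r > 1.
\end{align*}
My plan is to handle this one-line inequality and then note the reduction.

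The cleanest route is a substitution: set $t := r^{\gamma}$, so that $t > 1$ and $\ln(r) = \gamma^{-1}\ln(t)$. The inequality then becomes $\ln(t) \leq t$, which is a classical consequence of $\ln(t) \leq t - 1$ valid for every $t > 0$. Alternatively, one can argue directly by calculus: define $f(r) := \gamma^{-1} r^{\gamma} - \ln(r)$, compute $f'(r) = r^{\gamma - 1} - r^{-1} = r^{-1}(r^{\gamma} - 1) > 0$ for $r > 1$, and note $f(1) = \gamma^{-1} > 0$; monotonicity then yields $f(r) > 0$ on $(1, \infty)$.

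Either approach gives $\ln(r) \leq \gamma^{-1} r^{\gamma}$ on $(1, \infty)$. Multiplying by $2 r^{\alpha} > 0$ and using $\ln(r^2) = 2 \ln(r)$ and $r^{\alpha} \cdot r^{\gamma} = r^{\beta}$ yields the stated bound. There is no real obstacle here; the only thing to verify carefully is that $\gamma > 0$ is essential so that the substitution $t = r^{\gamma}$ preserves the condition $t > 1$ (equivalently, that $f'$ has the correct sign), which is exactly the hypothesis $\beta > \alpha$.
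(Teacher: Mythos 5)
Your proof is correct and the direct calculus route ($f(r)=\gamma^{-1}r^{\gamma}-\ln r$, $f'(r)=r^{-1}(r^{\gamma}-1)>0$ on $(1,\infty)$, $f(1)>0$) is essentially the paper's argument, just scaled by a factor of $2$. The alternative substitution $t=r^{\gamma}$ reducing to $\ln t\leq t$ is a pleasant shortcut but of the same elementary nature, so there is no substantive difference.
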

\begin{proof}
We claim that $f(r)=\frac{2}{\beta-\alpha}r^{\beta-\alpha}-\ln(r^2)>0$ in $[1,\infty)$. Indeed, this follows since 
$f'(r) = 2r^{\beta-\alpha-1}-\frac{2}{r}=0$ if and only if 
$r=1$, which is the unique minimum of $f$ and $f(1)\geq 0$.
\end{proof}

\begin{lemma}\label{lem:5.5}
Let $\Omega\subset \R^N$ be an open bounded set and let $u,v\in L^2(\Omega)$ be such that $v=u=0$ in $\R^N\backslash \Omega$, then
\begin{align*}
    \iint_{\substack{x,y\in\R^N\\|x-y|\geq 1}}\frac{|u(x)v(y)|}{|x-y|^N}\ dxdy
    &\leq |u|_1|v|_1
    \leq |\Omega| |u|_2|v|_2,\\
    \iint_{\substack{x,y\in\R^N\\|x-y|\geq 1}}\frac{|u(x)u(y)|}{|x-y|^N}\ dxdy
    &\leq |u|_1^2
    \leq |\Omega| |u|_2^2.
\end{align*}
\end{lemma}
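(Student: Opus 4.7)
The proof is short and the plan has two independent ingredients which I would combine in the natural order.

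First, I would exploit the constraint $|x-y|\geq 1$ in the domain of integration. Since on this region $|x-y|^N\geq 1$, we have $|x-y|^{-N}\leq 1$ pointwise, so the kernel can simply be dropped. After extending the remaining integrand from $\{|x-y|\geq 1\}$ to all of $\R^N\times\R^N$ (the integrand $|u(x)v(y)|$ is nonnegative), the double integral factors:
\[
\iint_{|x-y|\geq 1}\frac{|u(x)v(y)|}{|x-y|^N}\,dx\,dy\leq \iint_{\R^N\times\R^N}|u(x)||v(y)|\,dx\,dy=\Bigl(\int_{\R^N}|u|\Bigr)\Bigl(\int_{\R^N}|v|\Bigr)=|u|_1|v|_1,
\]
where in the last step I use that $u,v$ vanish outside $\Omega$, so the $\R^N$ integrals reduce to $\Omega$ integrals.

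Second, to upgrade the $L^1$ bound to the stated $L^2$ bound, I would apply the standard Cauchy--Schwarz/Hölder inequality on the bounded set $\Omega$: since $u,v$ are supported in $\Omega$,
\[
|u|_1=\int_\Omega |u|\cdot 1\,dx\leq |u|_2\,|\Omega|^{1/2},\qquad |v|_1\leq |v|_2\,|\Omega|^{1/2}.
\]
Multiplying these gives $|u|_1|v|_1\leq |\Omega|\,|u|_2|v|_2$, which chains with the first estimate to produce the first displayed inequality of the lemma.

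Finally, the second displayed inequality is just the first one specialized to $v=u$. I do not anticipate any genuine obstacle here: the only subtlety is remembering to use the support hypothesis $u=v=0$ on $\R^N\setminus\Omega$ when passing from integrals over $\R^N$ to integrals over $\Omega$ (both when dropping the kernel and when applying Hölder to gain the factor $|\Omega|$).
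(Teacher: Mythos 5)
Your proof is correct and follows essentially the same route as the paper: bound the kernel by $1$ on the region $|x-y|\geq 1$, factor the double integral to get $|u|_1|v|_1$, then apply H\"older/Cauchy--Schwarz on the bounded set $\Omega$ (the paper phrases the first step via a change of variables $x\mapsto x+y$, but this is the same estimate).
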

\begin{proof}
Note that
\begin{align*}
\int_{\R^N}\int_{\R^N\backslash B_1(y)}\frac{|u(x)v(y)|}{|x-y|^N}\ dx\,dy
&=\int_{\R^N}|v(y)|\int_{\R^N\backslash B_1(0)}\frac{|u(x+y)|}{|x|^N}\ dx\,dy\\
&\leq\int_{\R^N}|v(y)|\int_{\R^N}|u(x+y)|\ dx\,dy=|u|_1|v|_1.
\end{align*}
The result now follows from Hölder's inequality.
\end{proof}

\begin{lemma}\label{lem:cEbd}
Let $u\in\cH_0^s(\Omega)$ for some $s\in(0,1)$. Then $u\in \mH(\Omega)$ and there are $C_1=C_1(N)>0$ and $C_2=C_2(\Omega)>0$ such that
\begin{align*}
|\cE_L(u,u)|<C_1 |u|_1^2+\frac{1}{s}\|u\|^2_s\qquad\text{ and }\qquad \|u\|^2<C_2 |u|_2^2+\frac{1}{s}\|u\|^2_s.
\end{align*}
\end{lemma}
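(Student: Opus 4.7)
My plan is to bound both $\cE_L(u,u)$ and $\|u\|^2=\cE(u,u)$ by decomposing them into their defining double integrals and estimating each piece using the pointwise kernel inequality $|x-y|^{-N}\le|x-y|^{-N-2s}$, which holds on $\{|x-y|\le 1\}$, together with Lemma \ref{lem:5.5}.

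I would first handle the second inequality. Since the defining integral of $\cE(u,u)$ is supported on $\{|x-y|\le 1\}$, the pointwise kernel bound allows me to enlarge the integration domain to $\R^N\times\R^N$ and compare with the defining integral of $\|u\|_s^2$, giving
\begin{align*}
\|u\|^2\le\frac{c_N}{c_{N,s}}\|u\|_s^2.
\end{align*}
From the explicit formula $c_{N,s}=4^s\pi^{-N/2}s(1-s)\Gamma(\tfrac{N}{2}+s)/\Gamma(2-s)$, a direct $\Gamma$-function expansion yields $c_{N,s}/s\to c_N$ as $s\to 0^+$, so that $c_N/c_{N,s}=1/s+O(1)$. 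The residual $O(1)\cdot\|u\|_s^2$ term is absorbed into $C_2(\Omega)|u|_2^2$ via a fractional Poincaré-type inequality on $\cH_0^s(\Omega)$ whose constant remains uniformly bounded as $s\to 0^+$ (this is the regime of interest in view of the hypotheses of the main theorems).

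For the first inequality, I would use the decomposition
\begin{align*}
\cE_L(u,u)=\cE(u,u)-c_N\iint_{|x-y|\ge 1}\frac{u(x)u(y)}{|x-y|^N}\,dx\,dy+\rho_N|u|_2^2
\end{align*}
and estimate term by term: the first by the argument above, the second by $c_N|u|_1^2$ via Lemma \ref{lem:5.5}, and the $\rho_N|u|_2^2$ contribution (together with any $|u|_2^2$ residue from the first piece) by reinvoking the fractional Poincaré inequality and absorbing into the $(1/s)\|u\|_s^2$ term, using that $s\cdot C(\Omega,s)$ stays small as $s\to 0^+$. This yields the claimed bound with $C_1$ depending only on $N$ (through $c_N$).

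The main obstacle is extracting the precise coefficient $1/s$ in front of $\|u\|_s^2$, rather than the natural ratio $c_N/c_{N,s}$: a careful $\Gamma$-function asymptotic analysis is needed, together with the absorption of residual $|u|_2^2$ contributions through a Poincaré-type estimate whose constant behaves uniformly as $s\to 0^+$. An additional subtlety is that $|u|_2^2$ is not directly controlled by $|u|_1^2$, so the $\rho_N|u|_2^2$ term cannot be folded into $C_1|u|_1^2$ directly and must instead be routed through $\|u\|_s^2$.
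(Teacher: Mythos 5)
Your physical-side kernel comparison $|x-y|^{-N}\le|x-y|^{-N-2s}$ on $\{|x-y|\le 1\}$ is valid and does give $\|u\|^2\le (c_N/c_{N,s})\|u\|_s^2$, but the argument then runs into a genuine and unfixable obstruction at the absorption step. You propose to write $c_N/c_{N,s}=1/s+O(1)$ and to absorb the residual $O(1)\|u\|_s^2$ into $C_2(\Omega)|u|_2^2$ by a fractional Poincar\'e inequality. This is the wrong direction: Poincar\'e on $\cH^s_0(\Omega)$ gives $|u|_2^2\lesssim\|u\|_s^2$, whereas there is no inequality of the form $\|u\|_s^2\lesssim|u|_2^2$ (the ratio $\|u\|_s^2/|u|_2^2$ is unbounded on $\cH^s_0(\Omega)\setminus\{0\}$, e.g.\ along high-frequency test functions). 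Moreover, $c_N/c_{N,s}-1/s$ is not even $O(1)$ uniformly on $(0,1)$: the factor $(1-s)$ in $c_{N,s}$ makes $c_N/c_{N,s}\to\infty$ as $s\to 1^-$, and for $N=1$ one can check that $s\,c_N/c_{N,s}>1$ already near $s=0$, so the coefficient never sits below $1/s$. Your remark that this is ``the regime of interest'' flags that you noticed the tension, but the lemma asserts the estimate with the exact constant $1/s$ for all $s\in(0,1)$, and your route cannot produce it. The same gap then propagates into your first inequality, since you bound $\cE(u,u)$ ``by the argument above''; in addition, routing the $\rho_N|u|_2^2$ term through Poincar\'e would introduce $\Omega$- and $s$-dependence into the coefficient of $\|u\|_s^2$, whereas the statement requires that coefficient to be exactly $1/s$ and $C_1$ to depend only on $N$.

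The paper instead works entirely on the Fourier side, where the constant $1/s$ appears exactly rather than asymptotically. Using $\cE_L(u,u)=\int_{\R^N}\ln(|\xi|^2)|\widehat u(\xi)|^2\,d\xi$, one splits at $|\xi|=1$: on $B_1(0)$ one bounds $|\widehat u|_\infty^2\le(2\pi)^{-N}|u|_1^2$ against $\int_{B_1(0)}|\ln(|\xi|^2)|\,d\xi$, giving the $C_1(N)|u|_1^2$ term; on $\R^N\setminus B_1(0)$ one applies Lemma~\ref{ln:bd} with $\alpha=0$, $\beta=2s$, i.e.\ $\ln(|\xi|^2)\le\tfrac{1}{s}|\xi|^{2s}$ for $|\xi|>1$, which yields exactly $\tfrac{1}{s}\|u\|_s^2$. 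The second inequality then follows from the identity~\eqref{cEuu} and Lemma~\ref{lem:5.5}, which give $|\cE_L(u,u)|\ge\|u\|^2-C'(\Omega)|u|_2^2$, combined with $|u|_1^2\le|\Omega|\,|u|_2^2$. The essential tool you are missing is thus the logarithm bound of Lemma~\ref{ln:bd}, applied in frequency space; the physical-side kernel comparison is the correct intuition but does not produce the sharp $1/s$ constant that the Fourier-side splitting does.
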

\begin{proof}
Let $u\in C^\infty_c(\Omega)$ and $C=\int_{B_1(0)} |\ln(|\xi|^2)|\ d\xi.$ Then, by~\eqref{fou} and Lemma~\ref{ln:bd} (with $\alpha=0$ and $\beta=2s$),
\begin{align*}
|\cE_L(u,u)|
&\leq \int_{\R^N} |\ln(|\xi|^2)||\widehat u(\xi)|^2\ d\xi\\
&\leq |\widehat u|_\infty^2\int_{B_1(0)} |\ln(|\xi|^2)|\ d\xi
+\int_{\R^N\backslash B_1(0)} \ln(|\xi|^2)|\widehat u(\xi)|^2\ d\xi\\
&\leq C |u|_1^2+s^{-1}\cE_s(u,u)=C |u|_1^2+s^{-1}\|u\|^2_s.
\end{align*}
Note that, by Lemma~\ref{lem:5.5}, there is $C'=C'(\Omega)>0$ such that $|\cE_L(u,u)|\geq\|u\|^2-C'|u|_2^2$ and therefore
\begin{align*}
\|u\|^2\leq (C'+C|\Omega|)|u|_2^2+s^{-1}\|u\|^2_s.
\end{align*}
The result for $u\in \cH^s_0(\Omega)$ follows by a standard density argument.
\end{proof}

\begin{lemma}\label{lem:sig:u}
Let $s\in(0,\frac{1}{4})$ and let $u_s\in \cH^s_0(\Omega)$ satisfy that
\begin{align}\label{bd:hyp}
     \|u_s\|_s<C
\end{align}
for some constant $C>0$. Then $u_s\in\cH^\sigma_0(\Omega)$ for $\sigma\in(0,s)$ and there is $C'=C'(C,\Omega)>0$ such that 
\begin{align}\label{sigma:bd}
|u_s|_2^2+\|u_s\|^2_\sigma<C'\qquad \text{ for all $\sigma\in(0,s)$.}  
\end{align}
\end{lemma}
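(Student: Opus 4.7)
The plan is to bound the two quantities separately, using for both the Fourier identity $\|u\|_s^2 = \int_{\R^N}|\xi|^{2s}|\widehat u(\xi)|^2\,d\xi$ that is the reason for the normalizing constant $c_{N,s}$ in the definition of $\cE_s$. For the $L^2$ bound I would combine Hölder's inequality with the fractional Sobolev embedding. Using the conjugate pair $(2^*_s/2, 2^*_s/(2^*_s-2))$ and the identity $1-2/2^*_s = 2s/N$, one obtains
\[
|u_s|_2^2 \leq |\Omega|^{2s/N}\, |u_s|_{2^*_s}^2 \leq |\Omega|^{2s/N}\, \kappa_{N,s}\, \|u_s\|_s^2 < |\Omega|^{2s/N}\, \kappa_{N,s}\, C^2
\]
by Theorem~\ref{thm:sobolev}. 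For $s\in(0,\tfrac14)$, both $|\Omega|^{2s/N}$ (bounded by $\max(1,|\Omega|^{1/(2N)})$) and $\kappa_{N,s}$ (continuous with $\kappa_{N,s}\to 1$ as $s\to 0^+$ by~\eqref{eq:best_constant}) are bounded by constants depending only on $N$ and $\Omega$, which yields $|u_s|_2^2 \leq C_1' = C_1'(C,\Omega)$.

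For the $\cH_0^\sigma$ bound with $\sigma\in(0,s)$ I would split the Fourier integral at $|\xi|=1$:
\[
\|u_s\|_\sigma^2 = \int_{|\xi|<1}|\xi|^{2\sigma}|\widehat{u_s}(\xi)|^2\, d\xi + \int_{|\xi|\geq 1}|\xi|^{2\sigma}|\widehat{u_s}(\xi)|^2\, d\xi.
\]
On $\{|\xi|<1\}$ we have $|\xi|^{2\sigma}\leq 1$, so by Plancherel this piece is at most $|u_s|_2^2\leq C_1'$. On $\{|\xi|\geq 1\}$ we have $|\xi|^{2\sigma}\leq |\xi|^{2s}$ since $\sigma<s$, so this piece is at most $\|u_s\|_s^2 < C^2$. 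In particular the sum is finite, so $u_s\in \cH_0^\sigma(\Omega)$ (the support condition $u_s=0$ in $\R^N\setminus\Omega$ being inherited), and $\|u_s\|_\sigma^2 \leq C^2 + C_1'$ uniformly in $\sigma\in(0,s)$. Setting $C':=2C_1'+C^2$ then gives~\eqref{sigma:bd}.

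I do not anticipate a real obstacle: both estimates are direct Fourier/Hölder computations. The only point that requires care is that the prefactors $|\Omega|^{2s/N}$ and $\kappa_{N,s}$ must remain bounded as $s$ varies in $(0,\tfrac14)$, which is immediate from continuity of these expressions and the asymptotics~\eqref{eq:best_constant} already recorded in this section. This uniformity in $s$ of the final constant $C'$ is precisely what makes the lemma useful in the $s_k\to 0^+$ arguments later in the paper.
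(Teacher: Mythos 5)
Your proof is correct, and the treatment of $\|u_s\|_\sigma^2$ is identical to the paper's: the same split of the Fourier integral at $|\xi|=1$, bounding $|\xi|^{2\sigma}$ by $1$ on the unit ball and by $|\xi|^{2s}$ outside it, giving $\|u_s\|_\sigma^2\le|u_s|_2^2+\|u_s\|_s^2$. The difference is in how you control $|u_s|_2^2$. You interpolate with H\"older against the critical exponent $2^*_s$ and then apply the fractional Sobolev inequality (Theorem~\ref{thm:sobolev}), using only the hypothesis $\|u_s\|_s<C$ and the continuity of $\kappa_{N,s}$ on $[0,\tfrac14]$. The paper instead uses H\"older against $p_s$ and then the identity $|u_s|_{p_s}^2=\|u_s\|_s^{4/p_s}$, which is precisely the Nehari relation $|u_s|_{p_s}^{p_s}=\|u_s\|_s^2$ -- an assumption \emph{not} recorded in the statement of the lemma. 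In the paper's applications the functions in question do happen to lie in $\cN_s$ (or the quantity $|u_s|_2^2$ is only needed for a fixed test function), so this is at worst an imprecision there, but your derivation proves the lemma exactly as stated, without the tacit Nehari hypothesis, and is therefore the cleaner route. One small thing worth flagging for yourself: when invoking Theorem~\ref{thm:sobolev} you need $s<N/2$, which for $s\in(0,\tfrac14)$ and $N\ge1$ is automatic, as you implicitly used.
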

\begin{proof}
Note that
\begin{align*}
\|u_s\|^2_\sigma
&=\int_{\R^N}|\xi|^{2\sigma}|\widehat u_s(\xi)|^{2}\ d\xi
=\int_{\{|\xi|\leq 1\}}|\xi|^{2\sigma}|\widehat u_s(\xi)|^{2}\ d\xi
+\int_{\{|\xi|>1\}}|\xi|^{2\sigma}|\widehat u_s(\xi)|^{2}\ d\xi\\
&\leq\int_{\R^N}|\widehat u_s(\xi)|^{2}\ d\xi
+\int_{\R^N}|\xi|^{2s}|\widehat u_s(\xi)|^{2}\ d\xi=|u_s|_2^2+\|u_s\|_s^2<C^2+|u_s|_2^2.
\end{align*}
The claim now follows by Hölder's inequality, because
\begin{align*}
|u_s|_2^2\leq |\Omega|^\frac{p_s-2}{p_s} |u_s|_{p_s}^{2}
=|\Omega|^\frac{p_s-2}{p_s} \|u_s\|_{s}^{\frac{4}{p_s}}
\leq \max_{t\in[0,\frac{1}{4})}|\Omega|^\frac{p_t-2}{p_t} C^{\frac{4}{p_t}}.
\end{align*}
\end{proof}

\begin{lemma}\label{Es:exp:lem}
Let $0<\sigma<s<1$ and $u\in \cH_0^s(\Omega)$. There is a constant $d_N>0$ depending only on $N$ such that
\begin{align*}
    \left| \cE_\sigma(u,u)-|u|_2^2-\sigma\cE_L(u,u) \right|
    \leq d_N \frac{\sigma^2}{(s-\sigma)^2}\left( |u|_1^2+\cE_s(u,u) \right).
\end{align*}
In particular, $\|u\|_\sigma\to |u|_2$ as $\sigma\to 0^+$.
\end{lemma}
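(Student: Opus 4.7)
The plan is to pass to the Fourier side, where for $u\in C^\infty_c(\Omega)$ Plancherel's theorem together with \eqref{fou} yields
\begin{equation*}
 \cE_\sigma(u,u) - |u|_2^2 - \sigma\,\cE_L(u,u) \;=\; \int_{\R^N} g_\sigma(\xi)\,|\widehat u(\xi)|^2\,d\xi,
 \qquad g_\sigma(\xi) \;:=\; |\xi|^{2\sigma}-1-2\sigma\ln|\xi|.
\end{equation*}
The case $u\in\cH^s_0(\Omega)$ will then follow by density of $C^\infty_c(\Omega)$ in $\cH^s_0(\Omega)$, with Lemma~\ref{lem:cEbd} ensuring that $\cE_L(u,u)$ and the required quadratic forms pass to the limit along the approximation.

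The heart of the argument is a pointwise bound on $g_\sigma$. Applying Taylor's theorem with integral remainder to $t\mapsto |\xi|^{2t}=e^{2t\ln|\xi|}$ gives
\begin{equation*}
 g_\sigma(\xi)\;=\;4\int_0^\sigma (\sigma-t)(\ln|\xi|)^2 |\xi|^{2t}\,dt\;\ge\;0.
\end{equation*}
I would then split the $\xi$-integral into $\{|\xi|\le 1\}$ and $\{|\xi|>1\}$. On the unit ball, $|\xi|^{2t}\le 1$ forces $g_\sigma(\xi)\le 2\sigma^2(\ln|\xi|)^2$; combined with $|\widehat u|_\infty^2\le (2\pi)^{-N}|u|_1^2$ and the finiteness of the dimensional constant $\int_{B_1}(\ln|\xi|)^2\,d\xi$, this piece contributes at most $C_N\sigma^2 |u|_1^2$.

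The delicate piece is $\{|\xi|>1\}$, where the weight $(\ln|\xi|)^2|\xi|^{2t}$ must be traded for the $|\xi|^{2s}$ weight furnished by $\cE_s(u,u)$. For this I would apply Lemma~\ref{ln:bd} \emph{twice} in succession: first with exponents $(\alpha,\beta)=(2t,s+t)$ to obtain $\ln(r^2)r^{2t}\le \tfrac{2}{s-t}r^{s+t}$, and then with $(\alpha,\beta)=(s+t,2s)$ to obtain $\ln(r^2)r^{s+t}\le \tfrac{2}{s-t}r^{2s}$; both applications are legal since $t<\sigma<s$. Multiplying the two estimates yields $(\ln|\xi|)^2|\xi|^{2t}\le (s-t)^{-2}|\xi|^{2s}$ for $|\xi|>1$. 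The elementary bound $\int_0^\sigma (\sigma-t)(s-t)^{-2}\,dt\le \sigma^2(s-\sigma)^{-2}$ (obtained by estimating $\sigma-t\le\sigma$ and $s-t\ge s-\sigma$ separately) then gives an $|\xi|>1$ contribution $\le 4\sigma^2(s-\sigma)^{-2}\cE_s(u,u)$. Finally, since $s<1$ forces $(s-\sigma)^2<1$, one can absorb $C_N\sigma^2|u|_1^2$ into $C_N\sigma^2(s-\sigma)^{-2}|u|_1^2$ to obtain the claimed inequality with $d_N:=C_N+4$. The ``in particular'' statement then follows since fixing $s$ with $u\in\cH^s_0(\Omega)$ makes $\cE_s(u,u)$ and $\cE_L(u,u)$ finite (the latter by Lemma~\ref{lem:cEbd}), so both the bound on the right and the term $\sigma\,\cE_L(u,u)$ vanish as $\sigma\to 0^+$, yielding $\cE_\sigma(u,u)\to |u|_2^2$.

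The main obstacle is calibrating the double application of Lemma~\ref{ln:bd} to produce exactly the power $(s-\sigma)^{-2}$: a single application yields only $(s-\sigma)^{-1}$ times a residual logarithmic weight that one cannot absorb. Splitting the gap symmetrically through the intermediate exponent $s+t$ distributes the gain into the two desired factors of $(s-t)^{-1}$ and delivers the sharp rate.
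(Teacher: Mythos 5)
Your proof is correct and follows essentially the same Fourier-side Taylor-remainder strategy as the paper: expand $|\xi|^{2\sigma}$ to second order, split at $|\xi|=1$, and use Lemma~\ref{ln:bd} to trade the $(\ln|\xi|)^2$ weight on $\{|\xi|>1\}$ for $(s-\sigma)^{-2}|\xi|^{2s}$. The only cosmetic difference is that the paper applies Lemma~\ref{ln:bd} once with $\alpha=0$, $\beta=s-\sigma$ and then \emph{squares} the resulting inequality (which equally yields the $(s-\sigma)^{-2}$ factor), whereas you cascade two applications through the intermediate exponent $s+t$; both routes work, so your concluding remark that a single application leaves an ``unabsorbable'' residual logarithm slightly overstates the obstacle.
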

\begin{proof}
We argue as in \cite[Lemma 2.6]{FJW20}.   For $\xi\in\R^N$, let $h(\sigma):=|\xi|^{2\sigma}$. Then 
$h'(\sigma)=|\xi|^{2\sigma}\ln(|\xi|^2)$, 
$h''(\sigma)=|\xi|^{2\sigma}(\ln(|\xi|^2))^2$, and
\begin{align*}
    \left||\xi|^{2\sigma}-1-\sigma\ln(|\xi|^2)\right|
    &=|h(\sigma)-h(0)-\sigma h'(0)|
    =\left|\int_0^t h''(\tau)(\sigma-\tau)\ d\tau\right|\\
    &\leq |\ln(|\xi|^2)|^2\int_0^\sigma |\xi|^{2\tau}|\sigma-\tau|\ d\tau
    =|\ln(|\xi|^2)|^2 \sigma^2 \int_0^1 |\xi|^{2\tau \sigma}|1-\tau|\ d\tau\\
    &\leq  \sigma^2|\ln(|\xi|^2)|^2 (\chi_{\{|\xi|\leq 1\}}+|\xi|^{2\sigma}\chi_{\{|\xi|> 1\}}),
\end{align*}
since $|\xi|^{2\tau \sigma}|1-\tau|\leq 1$ if $|\xi|\leq 1$ and $|\xi|^{2\tau \sigma}|1-\tau|<|\xi|^{2\sigma}$ if $|\xi|>1$ for $\tau\in(0,1)$.

Note also that, for $|\xi|>1$ and $0<\sigma<s<1$, we have that $2\ln|\xi|<\frac{2}{s-\sigma}|\xi|^{s-\sigma}$, by Lemma~\ref{ln:bd}, and then 
\begin{align}\label{xi}
|\ln(|\xi|^2)|^2|\xi|^{2\sigma}<\frac{4}{(s-\sigma)^2}|\xi|^{2s}\quad \text{ for }|\xi|>1.
\end{align}

Let $u\in C^\infty_c(\Omega)$ and $d_N:=4+(2\pi)^{-N}\int_{B_1(0)}|\ln(|\xi|^2)|^2\ d\xi$.  Then, by~\eqref{xi},
\begin{align*}
\Big| \cE_\sigma(u,u)-|u|_2^2&-\sigma\cE_L(u,u) \Big|
=\left|\int_{\R^N}(|\xi|^{2\sigma}-1-\sigma\ln(|\xi|^2)|\widehat u(\xi)|^2\ d\xi \right|\\
&\leq \sigma^2 \int_{\R^N}
|\ln(|\xi|^2)|^2 (\chi_{\{|\xi|\leq 1\}}+|\xi|^{2\sigma}\chi_{\{|\xi|> 1\}})|\widehat u(\xi)|^2\ d\xi\\
&\leq \sigma^2 \|\widehat u\|^2_\infty \int_{B_1(0)}|\ln(|\xi|^2)|^2\ d\xi 
+ \sigma^2 \int_{\R^N\backslash B_1(0)}|\ln(|\xi|^2)|^2|\xi|^{2\sigma}|\widehat u(\xi)|^2\ d\xi\\
&\leq \sigma^2 (2\pi)^{-N}|u|_1^2\int_{B_1(0)}|\ln(|\xi|^2)|^2\ d\xi 
+ \sigma^2 \frac{4}{(s-\sigma)^2} \int_{\R^N\backslash B_1(0)}|\xi|^{2s}|\widehat u(\xi)|^2\ d\xi\\
&\leq d_N \frac{\sigma^2}{(s-\sigma)^2}\left(|u|^2_1 + \cE_s(u,u)\right).
\end{align*}
The result for $u\in \cH^s_0(\Omega)$ follows by density.
\end{proof}

\subsection{Sharp logarithmic Sobolev inequality}

Recall that
\begin{align*}
 |u|_2:=\left(\int_{\R^N}|u|^2\right)^\frac{1}{2}\qquad \text{ for }u\in L^2(\Omega).
\end{align*}
Now we present a sharp logarithmic Sobolev inequality, which is one of our main tools to guarantee the compactness of the minimizing sequences of $J_0$ (see \eqref{J0def}).  This result is shown in \cite[Theorem 3]{B95} for functions in the Schwarz space and using a different definition of the Fourier Transform (which influences the expression of the optimal constants that are important in our arguments).  For completeness, we include here a slightly different proof and a statement which is more adequate for our purposes. 
\begin{proposition}\label{log:prop}
For every $u\in \mH(\Omega)$,
\begin{align}\label{eq:ineq_becker}
  \frac{2}{N}\int_\Omega \ln(u^{2})u^2\ dx
  \leq\cE_L(u,u)+\frac{2}{N}\ln(|u|_2^2)|u|_2^2+a_N|u|^2_2,
\end{align}
where 
 \begin{align}\label{A}
  a_N:=\frac{2}{N} \ln \left(\frac{\Gamma (N)}{\Gamma
   \left(\frac{N}{2}\right)}\right)-\ln (4 \pi )-2 \psi\left(\frac{N}{2}\right)
 \end{align}
 and $\psi$ is the digamma function.
\end{proposition}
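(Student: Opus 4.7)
The plan is to derive \eqref{eq:ineq_becker} as the first-order Taylor expansion at $s=0$ of the fractional Sobolev inequality from Theorem~\ref{thm:sobolev}. Since $C_c^\infty(\Omega)$ is dense in $\mH(\Omega)$, I will first establish the inequality for $u\in C_c^\infty(\Omega)$ and then pass to the limit by density at the end.

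Fix $u\in C_c^\infty(\Omega)$ and consider
$$f(s):=\kappa_{N,s}\|u\|_s^2-|u|_{2^*_s}^2,\qquad s\in[0,\tfrac14).$$
Using the Fourier characterization of $\cE_s$ one checks $\|u\|_0^2=|u|_2^2$, while $2^*_0=2$ and $\kappa_{N,0}=1$, so $f(0)=0$; also $f(s)\ge 0$ for $s\in(0,1/4)$ by Theorem~\ref{thm:sobolev}. Hence the right derivative, once it is shown to exist, must satisfy $f'(0^+)\ge 0$, and the whole argument reduces to computing this derivative term by term. Three expansions are needed: first, by Lemma~\ref{Es:exp:lem}, $\|u\|_s^2=|u|_2^2+s\,\cE_L(u,u)+O(s^2)$; second, the computation already carried out for \eqref{kappa:eq} (via Lemma~\ref{lem:lim} and Taylor's formula for $\Gamma$) gives $\kappa_{N,s}=1+s\,a_N+o(s)$ with $a_N$ exactly the constant in \eqref{A}; and third, since $u$ is bounded with compact support, dominated convergence justifies differentiating $F(s):=\int|u|^{2^*_s}$ under the integral sign, so that from $|u|_{2^*_s}^2=\exp\!\bigl(\tfrac{2}{2^*_s}\ln F(s)\bigr)$, using $(2^*_s)'(0)=4/N$, the chain rule yields
$$\frac{d}{ds}|u|_{2^*_s}^2\Big|_{s=0}=-\frac{2}{N}|u|_2^2\ln|u|_2^2+\frac{2}{N}\int_\Omega u^2\ln(u^2)\,dx.$$
Inserting these three expansions into the inequality $f'(0^+)\ge 0$ and rearranging produces exactly \eqref{eq:ineq_becker} for $u\in C_c^\infty(\Omega)$.

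For the density step, take $u_n\in C_c^\infty(\Omega)$ with $u_n\to u$ in $\mH(\Omega)$; by \eqref{comp:em} we also have $u_n\to u$ in $L^2(\Omega)$ and, along a subsequence, almost everywhere. The right-hand side of \eqref{eq:ineq_becker} is continuous on $\mH(\Omega)$: the quadratic form $\cE_L$ is continuous by \eqref{cE} together with Lemma~\ref{lem:5.5}, and the term $\tfrac{2}{N}\ln(|u|_2^2)|u|_2^2+a_N|u|_2^2$ depends continuously on $|u|_2$. For the left-hand side, the elementary bound $t^2\ln(t^2)\ge -1/e$ allows Fatou's lemma applied to the nonnegative functions $u_n^2\ln(u_n^2)+1/e$, giving $\int u^2\ln(u^2)\le\liminf_n\int u_n^2\ln(u_n^2)$, which is enough to pass to the limit in \eqref{eq:ineq_becker}. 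The technically most delicate ingredient is the derivative calculation for the variable-exponent norm $s\mapsto|u|_{2^*_s}^2$ and the identification of its value at $s=0$; this is handled cleanly by dominated convergence because the smoothness and compact support of $u\in C_c^\infty(\Omega)$ give a uniform integrable domination of $s\mapsto|u|^{2^*_s}$ on a neighborhood of $s=0$.
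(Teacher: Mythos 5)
Your proposal is correct and follows essentially the same route as the paper: expand the fractional Sobolev inequality $|u|_{2^*_s}^2\leq\kappa_{N,s}\|u\|_s^2$ to first order at $s=0$ for $u\in C_c^\infty$ via Lemma~\ref{Es:exp:lem}, the expansion $\kappa_{N,s}=1+sa_N+o(s)$, and the derivative of $s\mapsto|u|_{2^*_s}^2$, and then conclude by density. The only cosmetic difference is in the density step, where you apply Fatou once to $u_n^2\ln(u_n^2)+1/e\geq 0$ rather than, as the paper does, splitting $\Omega$ into $\{|u_n|\leq 1\}$ (dominated convergence) and $\{|u_n|\geq 1\}$ (Fatou); your version is slightly tidier but equivalent.
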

\begin{proof}
Let $u\in C^\infty_c(\R^N)\backslash\{0\}$ and note that $|u|^{2^*_s}_{2^*_s}
=|u|_2^2+s\frac{4}{N}\int_{\R^N} |u|^2\ln|u|\, dx+o(s)$ as $s\to 0^+$. Moreover, for a positive function $F\in C^1([0,1])$,
\begin{align*}
    \partial_sF(s)^\frac{2}{2^*_s}=F(s)^{\frac{N-2 s}{N}} \left(\frac{(N-2 s) F'(s)}{N F(s)}-\frac{2 \ln (F(s))}{N}\right),
\end{align*}
and therefore, using $F(s)=|u|_{2^*_s}^{2^*_s}$, we obtain that $F(0)=|u|_2^2$ and
\begin{align}\label{exp1}
|u|_{2^*_s}^2
&=|u|_2^2+s\partial_t F(t)^\frac{2}{2^*_t}\Big|_{t=0}+o(s)
=|u|_2^2+s\frac{4}{N}\left(
\int_{\R^N} |u|^2\ln|u|\ dx-|u|_2^2 \ln |u|_2
\right)
+o(s)
\end{align}
 as $s\to 0^+$.
 Let $\kappa_{N,s}$ be given by~\eqref{eq:best_constant} and $a_N$ by~\eqref{A}, then $\kappa_{N,s}=1+s a_N+o(s)$ as $s\to 0^+$. Furthermore, by Lemma~\ref{Es:exp:lem}, $\|u\|_s^2=|u|^2_2+s\cE_L(u,u)+o(s)$ as $s\to 0^+$ and therefore
\begin{align}\label{exp2}
\kappa_{N,s}\|u\|^{2}_s
=|u|_2^2+s(a_N|u|_2^2+\cE_L(u,u))
+o(s)\quad \text{ as }s\to 0^+.
\end{align}
 By Theorem~\ref{thm:sobolev}, $|u|^2_{2^*_s}\leq\kappa_{N,s}\|u\|^{2}_s$, and, using~\eqref{exp1} and~\eqref{exp2},
\begin{align*}
\frac{4}{N}\left(
\int_{\R^N} |u|^2\ln|u|\ dx-|u|_2^2 \ln |u|_2
\right)
\leq 
    a_N|u|_2^2+\cE_L(u,u).
\end{align*}
This yields the claim for $u\in C^\infty_c(\R^N)$ and the general statement for $u\in \mH(\Omega)$ follows by density. Indeed, let $u\in\mH(\Omega)$ and let $(u_n)_{n\in\N}\subset C^\infty_c(\Omega)$ be such that $u_n\to u$ in $\mH(\Omega)$ ($C^\infty_c(\Omega)$ is dense in $\mH(\Omega)$ by \cite[Theorem 3.1]{CW19}). By~\eqref{comp:em}, $u_n\to u$ in $L^2(\Omega)$, and therefore
\begin{align}\label{Fa0}
\frac{4}{N}\left(
\int_{\R^N} |u_n|^2\ln|u_n| \ dx-|u|_2^2 \ln |u|_2
\right)
\leq 
    a_N|u|_2^2+\cE_L(u,u)+o(1)\quad \text{as $n\to\infty$}.
\end{align}
It suffices to show that, up to a subsequence,
\begin{align}\label{Fa}
\int_{\R^N} |u|^2\ln|u| \ dx\leq \lim_{n\to \infty}\int_{\R^N}|u_n|^2  \ln|u_n|\ dx\qquad \text{ for all }n\in\N.
\end{align}
Using that $\Omega$ is bounded and the dominated convergence theorem, we have that, passing to a subsequence,
\begin{align}\label{Fa1}
    \lim_{n\to\infty}\int_{\{|u_n|\leq 1\}} |u_n|^2\ln|u_n|\ dx =\int_{\{|u|\leq 1\}} |u|^2\ln|u|\ dx,
\end{align}
whereas, by Fatou's Lemma, we deduce that, passing to a subsequence,
\begin{align}\label{Fa2}
\int_{\{|u|\geq 1\}} |u|^2\ln|u|\ dx\leq \lim_{n\to\infty}\int_{\{|u_n|\geq 1\}} |u_n|^2\ln|u_n|\ dx.
\end{align}
Then~\eqref{Fa1} and~\eqref{Fa2} imply~\eqref{Fa}, and the logarithmic Sobolev inequality for general $u\in\mH(\Omega)$ follows from~\eqref{Fa} and~\eqref{Fa0}.
\end{proof}

\subsection{Differentiability of the energy functional}\label{difsubsec}

Let $J_0$ and $I$ be given by~\eqref{J0def}.  We show that $J_0$ is of class $C^1$ in $\mH(\Omega)$. Recall that 
\begin{align}\label{cEuu}
\cE_L(u,u)=\|u\|^2-c_N\iint_{\substack{x,y\in\R^N\\|x-y|\geq 1}}\frac{u(x)u(y)}{|x-y|^N}\ dxdy+\rho_N|u|_2^2.
\end{align}
It is clear that $\|\cdot\|$ is differentiable and $|\cdot|_2$ is also differentiable by~\eqref{comp:em}. Moreover, 
\begin{align*}
B(u,v):=\iint_{\substack{x,y\in\R^N\\|x-y|\geq 1}}\frac{u(x)v(y)}{|x-y|^N}\ dxdy
\end{align*}
is a bounded bilinear form (by Lemma~\ref{lem:5.5}). We show next the differentiability of $I$. 

\begin{lemma}\label{I:C1}
Let $I$ be given by~\eqref{J0def}. Then $I$ is of class $C^1$ in $\mH(\Omega)$ and $I'(u)v=\mu\int_\Omega uv\ln|u|$. In particular, $I'(u)\in\cL(\mH(\Omega),\R)$ and $I':\mH(\Omega)\to\cL(\mH(\Omega),\R)$ is continuous.
\end{lemma}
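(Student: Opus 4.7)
The plan is to verify three statements in sequence: (a) for each $u\in\mH(\Omega)$, the formula
\begin{align*}
I'(u)v:=\mu\int_\Omega uv\ln|u|\, dx
\end{align*}
defines a bounded linear functional on $\mH(\Omega)$; (b) this $I'(u)$ is the Fréchet derivative of $I$ at $u$; (c) the map $u\mapsto I'(u)$ is norm continuous from $\mH(\Omega)$ into $\cL(\mH(\Omega),\R)$. Together these yield $I\in C^1(\mH(\Omega),\R)$. Because Hölder's inequality paired with a subcritical Sobolev embedding is not available in this setting, the logarithmic Sobolev inequality of Proposition~\ref{log:prop} and the compact embedding~\eqref{comp:em} will stand in for them at every step.

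For (a), I would split $\Omega=\Omega_1\cup\Omega_2$ with $\Omega_1=\{|u|\leq 1\}$ and $\Omega_2=\{|u|>1\}$. On $\Omega_1$ the uniform bound $|u\ln|u||\leq 1/e$ and~\eqref{comp:em} immediately give a contribution of size $C|v|_2\leq C\|v\|$. On $\Omega_2$ one has $\ln|u|\geq 0$, and the elementary inequality $|uv|\ln|u|\leq\tfrac12 u^2\ln|u|+\tfrac12 v^2\ln|u|$ splits the remaining integral into two pieces: the first is bounded by $|u|_2^2$ and $\cE_L(u,u)$ through Proposition~\ref{log:prop} applied to $u$, while for the second I would trade $\ln|u|\leq|u|^\delta/\delta$ on $\Omega_2$ for some small $\delta>0$ and then apply a Young-type inequality so that Proposition~\ref{log:prop} can be re-applied, this time to $v$, producing a bound through $\cE_L(v,v)$ and $|v|_2^2$. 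This yields $|I'(u)v|\leq C(u)\|v\|$; the same argument also shows $I(u)$ itself is finite. For (b), I would use Taylor's formula applied to $\phi(r)=\frac{\mu}{4}r^2(\ln r^2-1)$, for which $\phi'(r)=\mu r\ln|r|$ and $\phi''(r)=\mu(1+\ln|r|)$, to write
\begin{align*}
I(u+v)-I(u)-I'(u)v=\mu\int_\Omega v^2\int_0^1(1-\tau)(1+\ln|u+\tau v|)\, d\tau\, dx,
\end{align*}
and then control the integrand by the same splitting as in (a), applied to $u+\tau v$ uniformly in $\tau\in[0,1]$. Since $\|u+\tau v\|\leq\|u\|+\|v\|$, Proposition~\ref{log:prop} provides a uniform-in-$\tau$ bound, which gives a remainder of size $C(\|u\|)\|v\|^2=o(\|v\|)$ as $\|v\|\to 0$.

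For (c), given $u_n\to u$ in $\mH(\Omega)$, I would use~\eqref{comp:em} and, after passing to a subsequence, obtain $u_n\to u$ in $L^2(\Omega)$ and pointwise a.e.; continuity of $r\mapsto r\ln|r|$ then gives $u_n\ln|u_n|\to u\ln|u|$ a.e. A Vitali-type convergence argument, with the required equi-integrability provided by Proposition~\ref{log:prop} applied to the $\mH$-bounded sequence $(u_n)$, upgrades this to convergence in a form strong enough to conclude $\sup_{\|v\|\leq 1}|(I'(u_n)-I'(u))v|\to 0$. The main technical obstacle I anticipate is the estimate $\int_{\Omega_2}v^2\ln|u|\leq C(u)\|v\|^2$ from step~(a): a naive Cauchy--Schwarz splitting would require $u\ln|u|\in L^2(\Omega)$, which is not implied by $u\in\mH(\Omega)$ alone. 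The remedy is to exploit that $\ln$ grows slower than any positive power, exchange $\ln|u|$ for $|u|^\delta/\delta$, and re-invoke Proposition~\ref{log:prop} so that both factors end up controlled by quantities the logarithmic Sobolev inequality actually bounds; balancing $\delta$ against those constants is what closes the estimate and powers all three steps.
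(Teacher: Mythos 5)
Your overall three-step plan (bounded linear functional, Fréchet differentiability via a Taylor remainder, norm continuity of $I'$) is structurally different from the paper, which computes $I'(u)v$ directly from difference quotients via the mean value theorem and dominated convergence; the paper's dominating function $M(x) = |2u(x)|^2|\ln|2u(x)||+|2v(x)|^2|\ln|2v(x)||$ encapsulates precisely the pointwise estimate that your step (a) is after. The two strategies are in principle interchangeable, and yours is arguably clearer to organize -- but the specific technical step you rely on to make (a) (and hence (b) and (c)) close does not work as stated.

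The gap is in the treatment of $\int_{\Omega_2} v^2\ln|u|\,dx$ on $\Omega_2=\{|u|>1\}$. You propose to replace $\ln|u|$ by $|u|^\delta/\delta$ and then apply a Young-type inequality ``so that Proposition \ref{log:prop} can be re-applied to $v$.'' But Young's inequality on $v^2|u|^\delta$ produces terms of the form $\int_\Omega |v|^{2+\delta}$, and $\mathbb H(\Omega)$ does not embed into $L^{2+\delta}(\Omega)$ for any $\delta>0$: the only gain over $L^2$ available from Proposition \ref{log:prop} and \eqref{comp:em} is the Orlicz-type $L^2\log L$ bound $\int v^2\ln(v^2)\,dx<\infty$, which is strictly weaker than any $L^{2+\delta}$ norm. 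So there is no way to ``balance $\delta$'' to close this estimate, and the argument as written fails. The remedy is a pointwise max-comparison rather than a power trade: on $\Omega_2\cap\{|v|\leq|u|\}$ one has $v^2\ln|u|\leq u^2\ln|u|$, and on $\Omega_2\cap\{|v|>|u|\}$ one has $v^2\ln|u|\leq v^2\ln|v|$ (since both $|u|>1$ and $|v|>1$ there), after which Proposition \ref{log:prop} applied to $u$ and to $v$ separately finishes. Equivalently, one may use the Orlicz--Young inequality $ab\leq a\ln a-a+e^b$ with $a=v^2$, $b=\ln|u|$ to get $v^2\ln|u|\leq v^2\ln(v^2)-v^2+|u|$ directly. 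This is also exactly the content of the paper's $M(x)$: the inequality
\begin{align*}
|Q_\delta(x)|\leq 4\max\{|u(x)|,|v(x)|\}^2\,\big|\ln\bigl(2\max\{|u(x)|,|v(x)|\}\bigr)\big|\leq |2u(x)|^2|\ln|2u(x)||+|2v(x)|^2|\ln|2v(x)||
\end{align*}
is precisely this case split, and replacing your $|u|^\delta$ step by it makes all three of your steps go through.

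Two smaller remarks. In step (b), the Taylor remainder $\mu\int_\Omega v^2\int_0^1(1-\tau)(1+\ln|u+\tau v|)\,d\tau\,dx$ requires some care near the moving zero set of $u+\tau v$; the logarithmic singularity is integrable in $\tau$ for a.e.\ $x$, but a clean pointwise dominant is easier to produce via the mean value theorem (as the paper does) than uniformly in $\tau$. In step (c), passing to an a.e.\ convergent subsequence and invoking Vitali yields $L^1$-convergence of $u_n\ln|u_n|$, but to conclude $\sup_{\|v\|\leq 1}|(I'(u_n)-I'(u))v|\to 0$ you must pair against $v\in\mathbb H(\Omega)$, and $\mathbb H(\Omega)$ is not contained in $L^\infty$; you will need to rerun the max-comparison bound uniformly in $n$ to obtain the dominating function and then a subsequence--subsubsequence argument to remove the subsequence.
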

\begin{proof}
Let $x\in \Omega$, $u,v\in\mH(\Omega)$, and $\delta\in(-1,1)\backslash\{ 0\}$. Let $h(t):=t^2(\ln(t^2)-1)$, $h'(t)=2t\ln|t|$. By the mean value theorem, there is $\tau=\tau(x)\in[0,1]$ such that
\begin{align*}
Q_\delta(x):=\frac{h(u(x)+\delta v(x))-h(u(x))}{\delta}
&=2(u(x)+\delta\tau v(x))v(x)\ln|u(x)+\tau \delta v(x)|.
\end{align*}

Assume first that $|u(x)+\tau \delta v(x)|\geq 1$, then 
\begin{align*}
1\leq |u(x)+\delta\tau v(x)|\leq |u(x)|+|v(x)|\leq 2\max\{|u(x)|,|v(x)|\}    
\end{align*}
and, since $h'$ is monotone increasing in $(1,\infty)$, 
\begin{align*}
|Q_\delta(x)|
&\leq 4\max\{|u(x)|,|v(x)|\}|v(x)||\ln(2\max\{|u(x)|,|v(x)|\})|\\
&\leq 4\max\{|u(x)|,|v(x)|\}^2|\ln(2\max\{|u(x)|,|v(x)|\})|\\
&\leq |2u(x)|^2|\ln|2u(x)||
+|2v(x)|^2|\ln|2v(x)||=:M(x).
\end{align*}
Note that $M\in L^1(\Omega)$.  Indeed, by~\eqref{comp:em}, Lemma~\ref{lem:5.5}, and Proposition~\ref{log:prop}, 
\begin{align*}
\int_\Omega w^2|\ln |w||
&=\int_{\{|w|\geq 1\}} w^2\ln |w|-\int_{\{|w|<1\}} w^2\ln |w|=\int_{\Omega} w^2\ln |w|-2\int_{\{|w|<1\}} w^2\ln |w|\\
&\leq \frac{N}{2}\cE_L(w,w)+\ln(|w|_2^2)|w|_2^2+\frac{N}{2}|a_N||w|_2^2+2|\Omega|\sup_{t\in(0,1)}t^2|\ln t|
<\infty
\end{align*}
for all $w\in\mH(\Omega)$.

On the other hand, if $|u(x)+\tau \delta v(x)|<1$, then $|Q_\delta(x)|\leq 2e^{-1}$, because $\sup_{(0,1)}|h'|\leq 2e^{-1}$. Then $|Q_\delta|\leq M+2e^{-1}$ in $\Omega$ and, by the dominated convergence theorem,
\begin{align*}
I'(u)v=\lim_{\delta\to 0}\frac{I(u+\delta v)-I(u)}{\delta}
=\mu\int_\Omega uv\ln|u|\ dx.
\end{align*}
Using similar arguments, one can show that, for $u,v,w\in\mH(\Omega)$,
\begin{align*}
\lim_{\delta\to 0}|I'(u)(v+\delta w)-I'(u)v|
&\leq \frac{\mu}{2}\lim_{\delta\to 0}\delta \int_\Omega 
|u w\ln(u^2)|=0,\\
\lim_{\delta\to 0}|I'(u+\delta w)v-I'(u)v|
&\leq \frac{\mu}{2} \lim_{\delta\to 0}\int_\Omega 
|(u+\delta w)\ln((u+\delta w)^2)-u\ln(u^2)||v|=0,
\end{align*}
and therefore $I'(u)\in\cL(\mH(\Omega),\R)$ and $I':\mH(\Omega)\to\cL(\mH(\Omega),\R)$ is continuous.
\end{proof}

\section{Uniform bounds for the elements in the Nehari manifold}\label{U:sec}

In this section we show some uniform estimates for every $u\in\cN_s$ with $s\in(0,\frac{1}{4})$.  In particular, we show in Proposition~\ref{cor:Ebd} that all least-energy solutions of~\eqref{subcritical:intro} are uniformly bounded in $\mH(\Omega)$.

\medskip

We begin with some auxiliary lemmas. 

\begin{lemma}\label{lem:Ebd}
Let $p\in C^1([0,\tfrac{1}{4}])$ satisfy~\eqref{p:hyp}. There is a constant $c=c(p,\Omega)>0$ such that 
\begin{align}\label{bds}
\|u\|_s>c\quad \text{for all $u\in\cN_s$ and all $s\in(0,\tfrac{1}{4}).$}
\end{align}
\end{lemma}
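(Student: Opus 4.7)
The approach is to combine the Nehari identity with a H\"older--Sobolev chain. Fix $s\in(0,\tfrac{1}{4})$ and $u\in\cN_s$. Since $p_s<2^*_s$ and $\Omega$ is bounded, H\"older's inequality gives $|u|_{p_s}\le|\Omega|^{\frac{1}{p_s}-\frac{1}{2^*_s}}|u|_{2^*_s}$, and the fractional Sobolev inequality (Theorem~\ref{thm:sobolev}) gives $|u|_{2^*_s}\le\kappa_{N,s}^{1/2}\|u\|_s$. Raising to the $p_s$-th power, applying the Nehari identity $\|u\|_s^2=|u|_{p_s}^{p_s}$, and dividing by $\|u\|_s^2\neq 0$ yields $\|u\|_s^{p_s-2}\ge|\Omega|^{(p_s-2^*_s)/2^*_s}\kappa_{N,s}^{-p_s/2}$. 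Taking $(p_s-2)$-th roots then reduces the lemma to showing that
\[
F(s):=\bigl(|\Omega|^{(p_s-2^*_s)/2^*_s}\kappa_{N,s}^{-p_s/2}\bigr)^{1/(p_s-2)}
\]
is bounded below by a positive constant on $(0,\tfrac{1}{4})$.

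\medskip

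Since $F$ is continuous on $(0,\tfrac{1}{4}]$, the only delicate point is the limit $s\to 0^+$, where $p_s-2\to 0$ and the base tends to $1$, producing an indeterminate form. Here hypothesis~\eqref{p:hyp} enters crucially. The continuity of $p$ together with $2<p_s<2^*_s$ and $2^*_0=2$ forces $p(0)=2$; the one-sided derivative comparisons then yield $\mu:=p'(0)\in[0,\tfrac{4}{N}]$, and~\eqref{p:hyp} sharpens this to $\mu\in(0,\tfrac{4}{N})$. In particular $p_s-2=\mu s+o(s)$, which vanishes at the same linear rate as the numerator of $\ln F(s)$.

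\medskip

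To finish, I would Taylor expand at $s=0$ using $2^*_s-p_s=(\tfrac{4}{N}-\mu)s+o(s)$, $p_s\to 2$, and $\kappa_{N,s}=1+a_N s+o(s)$ (the latter obtained from~\eqref{kappa:eq} via Lemma~\ref{lem:lim}). Substituting into
\[
\ln F(s)=\frac{1}{p_s-2}\!\left[\frac{p_s-2^*_s}{2^*_s}\ln|\Omega|-\frac{p_s}{2}\ln\kappa_{N,s}\right]
\]
yields the finite limit $\mu^{-1}\bigl[-\tfrac{1}{2}(\tfrac{4}{N}-\mu)\ln|\Omega|-a_N\bigr]\in\R$ as $s\to 0^+$, so $\lim_{s\to 0^+}F(s)>0$. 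Together with the continuity of $F$ on $(0,\tfrac{1}{4}]$, this produces a uniform lower bound $F(s)\ge c(p,\Omega)>0$ on the whole interval.

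\medskip

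The main obstacle is precisely this $0/0$-type limit: the H\"older--Sobolev reduction is routine, but upgrading the pointwise inequality to a uniform estimate requires both the first-order expansion of the Sobolev constant contained in~\eqref{kappa:eq} and, crucially, the hypothesis $p'(0)\ne 0$; without it, $p_s-2$ would vanish faster than the logarithm of the base, and the exponent $1/(p_s-2)$ would drive $F(s)$ to zero.
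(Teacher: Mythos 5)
Your proof is correct and takes essentially the same approach as the paper: both combine H\"older's inequality, the fractional Sobolev inequality, and the Nehari identity to obtain a lower bound $\|u\|_s\geq \bigl(|\Omega|^{(p_s-2^*_s)/2^*_s}\kappa_{N,s}^{-p_s/2}\bigr)^{1/(p_s-2)}$, and then resolve the $0/0$ indeterminacy at $s=0$ using $p'(0)\in(0,\tfrac{4}{N})$ together with the first-order expansion of $\kappa_{N,s}$ from~\eqref{kappa:eq}. Your computation of $\lim_{s\to 0^+}\ln F(s)$ is a cosmetic repackaging of the paper's direct evaluation of the two limits $\lim_{s\to 0}|\Omega|^{\frac{2^*_s-p_s}{2^*_s(2-p_s)}}$ and $\lim_{s\to 0}\kappa_{N,s}^{\frac{p_s}{2(2-p_s)}}$, and the resulting constant agrees.
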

\begin{proof}
Let $F_s:\cH^s_0(\Omega)\backslash\{0\}\to \R$ be given by $F_s(u)=\|u\|_s^2 - |u|_{p_s}^{p_s}.$ By Hölder's inequality and Theorem~\ref{thm:sobolev},
\begin{align*}
F_s(u)\geq \|u\|_s^2 - |\Omega|^\frac{2^*_s-p_s}{2^*_s}|u|_{2^*_s}^{p_s}
\geq \|u\|_s^2 - |\Omega|^\frac{2^*_s-p_s}{2^*_s}\kappa_{N,s}^{\frac{p_s}{2}}\|u\|_{s}^{p_s}
=\|u\|_s^2(1 - |\Omega|^\frac{2^*_s-p_s}{2^*_s}\kappa_{N,s}^{\frac{p_s}{2}}\|u\|_{s}^{p_s-2}).
\end{align*}

Let $g(t,s) :=1 - |\Omega|^\frac{2^*_s-p_s}{2^*_s}\kappa_{N,s}^{\frac{p_s}{2}}t^{p_s-2},$ where $\kappa_{N,s}$ is given in~\eqref{eq:best_constant}. Then
\begin{align*}
g(t,s)>0\quad \text{ if }\quad t<|\Omega|^\frac{2^*_s-p_s}{2^*_s(2-p_s)} \kappa_{N,s}^{\frac{p_s}{{2(2-p_s)}}}.      
\end{align*}
Note that
\begin{align*}
\frac{2^*_s-p_s}{2^*_s(2-p_s)}
=\frac{2^*_s-2}{2^*_s(2-p_s)}+\frac{1}{2^*_s}
=-\frac{4}{(N-2s)2^*_s \int_0^1 p'(\tau s)\,d\tau}+\frac{1}{2^*_s}
\to \frac{1}{2}-\frac{2}{Np'(0)}\quad \text{as $s\to 0$,}
\end{align*}
and therefore
\begin{align*}
\lim_{s\to 0}|\Omega|^\frac{2^*_s-p_s}{2^*_s(2-p_s)}=
|\Omega|^{\frac{1}{2}-\frac{2}{Np'(0)}}>0.
\end{align*}
Furthermore, by~\eqref{kappa:eq},
\begin{align*}
\lim_{s\to 0}\kappa_{N,s}^{\frac{p_s}{{2(2-p_s)}}}=\lim_{s\to 0}
\left(
\kappa_{N,s}^\frac{1}{s}
\right)^{\frac{s p_s}{{2(2-p_s)}}}
= 
\left(\frac{1}{4\pi}\left(\frac{\Gamma(N)}{\Gamma(\frac{N}{2})}\right)^{\frac{2}{N}}e^{-2\psi(\frac{N}{2})}\right)^{-\frac{1}{p'(0)}}
>0.
\end{align*}
 As a consequence, there is $c=c(p,\Omega)>0$ such that $F_s(u)>0$ if $\|u\|_s\in(0,c)$, and then $\|u\|_s>c$ for all $u\in \cN_s$ and for all $s\in(0,\frac{1}{4})$, as claimed.  \end{proof}

\begin{lemma}\label{lem:t0}
Let $s\in(0,\tfrac{1}{4})$, $\varphi\in C^\infty_c(\Omega)\backslash\{0\}$, and $p_s:=p(s)$, where $p$ satisfies~\eqref{p:hyp}.  Let $t_\varphi^s$ be given by
\begin{align}\label{ts:def}
 t_\varphi^s :=\left( \frac{\|\varphi\|_s^2}{|\varphi|_{p_s}^{p_s}}\right)^{\frac{1}{p_s-2}}.
\end{align}
Then $t^s_\varphi \varphi\in\cN_s$ and
\begin{align*}
 \lim_{s\to 0^+} t_\varphi^s =
  t_\varphi^0
 :=\exp\left(\frac{\cE_L(\varphi,\varphi)-p'(0)\int_\Omega\ln (|\varphi|)|\varphi|^2\ dx}{p'(0)|\varphi|_2^2}\right)>0.
\end{align*}
In particular, $\sup_{s\in(0,\frac{1}{4})}t_\varphi^s<\infty$.
\end{lemma}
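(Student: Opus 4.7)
My first observation is that the membership $t_\varphi^s \varphi \in \cN_s$ is essentially tautological: the definition of $t_\varphi^s$ is precisely what one obtains by imposing $\|t\varphi\|_s^2 = |t\varphi|_{p_s}^{p_s}$ and solving for $t>0$. The substance of the lemma is therefore the limit, and my plan is to expand $\|\varphi\|_s^2$ and $|\varphi|_{p_s}^{p_s}$ to first order in $s$, divide the two expansions, and then apply Lemma~\ref{lem:lim} to pass to the limit in the exponent $1/(p_s-2)$.

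For the numerator, since $\varphi \in C^\infty_c(\Omega) \subset \cH^{s'}_0(\Omega)$ for any fixed $s' \in (1/4, 1)$, Lemma~\ref{Es:exp:lem} immediately yields $\|\varphi\|_s^2 = |\varphi|_2^2 + s\,\cE_L(\varphi, \varphi) + O(s^2)$ as $s \to 0^+$. For the denominator I would write $|\varphi|^{p_s} = \varphi^2\,e^{(p_s-2)\ln|\varphi|}$ on $\{\varphi \neq 0\}$ and Taylor-expand the exponential as $e^x = 1 + x + \tfrac{x^2}{2}e^{\theta x}$ with $\theta \in (0,1)$. Boundedness of $\varphi$ dominates the remainder uniformly in $s \in (0,1/4)$ by a constant multiple of $(p_s-2)^2 \varphi^2 (\ln|\varphi|)^2$, and $\varphi^2(\ln|\varphi|)^2$ is integrable on $\Omega$ because $t^2 \ln^2 t \to 0$ as $t \to 0^+$ and $\varphi$ is bounded with compact support. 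Integrating gives
\[
|\varphi|_{p_s}^{p_s} = |\varphi|_2^2 + (p_s-2)\int_\Omega \varphi^2 \ln|\varphi|\,dx + O\bigl((p_s-2)^2\bigr).
\]

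Continuity of $p$ together with the squeeze $2 < p(s) < 2^*_s$ forces $p(0)=2$, so $p_s - 2 = s\,p'(0) + o(s)$; moreover $p(s) > 2 = p(0)$ for $s > 0$ forces $p'(0) \geq 0$, and $p'(0) \neq 0$ by hypothesis, hence $p'(0) > 0$. Dividing the two expansions,
\[
(t_\varphi^s)^{p_s-2} = \frac{\|\varphi\|_s^2}{|\varphi|_{p_s}^{p_s}} = 1 + s\,b + o(s), \qquad b := \frac{\cE_L(\varphi,\varphi) - p'(0)\int_\Omega \varphi^2 \ln|\varphi|\,dx}{|\varphi|_2^2}.
\]
Writing $t_\varphi^s = \bigl((1+sb+o(s))^{1/s}\bigr)^{s/(p_s-2)}$ and noting that $s/(p_s-2) \to 1/p'(0)$, Lemma~\ref{lem:lim} combined with continuity of $x \mapsto x^y$ for $x>0$ would yield $t_\varphi^s \to \exp(b/p'(0)) = t_\varphi^0 > 0$, which is precisely the claimed formula. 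The bound $\sup_{s \in (0, 1/4)} t_\varphi^s < \infty$ then follows because $s \mapsto t_\varphi^s$ is continuous on $(0, 1/4)$ with finite positive limits at both endpoints: the limit at $0$ is $t_\varphi^0$ just computed, while at $s=1/4$ the defining formula evaluates to a finite positive number directly.

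The main obstacle I anticipate is the denominator expansion: $\ln|\varphi|$ is unbounded near the zero set of $\varphi$, so one must simultaneously justify integrability of the first-order correction $\varphi^2 \ln|\varphi|$ and produce a uniform-in-$s$ dominant for the Taylor remainder. Both points reduce, however, to the single fact that $\varphi$ is smooth, bounded, and compactly supported in $\Omega$; once this integrability is secured, the remainder of the argument is algebraic bookkeeping around Lemma~\ref{lem:lim}.
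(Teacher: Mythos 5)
Your proposal is correct and follows essentially the same route as the paper: expand $\|\varphi\|_s^2$ via Lemma~\ref{Es:exp:lem}, expand $|\varphi|_{p_s}^{p_s}$ to first order in $s$, divide, and pass to the limit with Lemma~\ref{lem:lim} after rewriting the exponent $1/(p_s-2)$ as $(1/s)\cdot s/(p_s-2)$ with $s/(p_s-2)\to 1/p'(0)$. The only addition you make is to spell out the justification of the denominator expansion (Taylor remainder of $e^{(p_s-2)\ln|\varphi|}$ dominated by $(p_s-2)^2\varphi^2\ln^2|\varphi|$, integrable since $\varphi$ is bounded with compact support), which the paper simply asserts; and you phrase the final boundedness claim via finite limits at both endpoints rather than the paper's ``continuous extension to $[0,1/4]$,'' but these are the same argument.
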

\begin{proof}
Let $s\in(0,\tfrac{1}{4})$ and  $\varphi\in C^\infty_c(\Omega)\backslash\{0\}$. Then, by Lemma~\ref{Es:exp:lem}, $\|\varphi\|_s^2=|\varphi|_2^2+s\cE_L(\varphi,\varphi)+o(s)
$ as $s\to 0^+.$ On the other hand, $|\varphi|_{p_s}^{p_s}=|\varphi|^2_2
+
s p'(0)\int_\Omega |\varphi|^2\ln |\varphi|\, dx+o(s).$ 
Let $A=|\varphi|_2^2$. Then, by Lemma~\ref{lem:lim},
\begin{align*}
 \lim_{s\searrow 0}t_\varphi^s 
 &= \lim_{s\searrow 0}\left( 
 \frac{1 + A^{-1}s\cE_L(\varphi,\varphi) + o(s)}
 {1+s\frac{p'(0)}{A} \int_\Omega |\varphi|^2\ln |\varphi|\, dx+o(s)}
 \right)^{\frac{1}{p_s-2}}\\
 &= \left(\frac{\lim_{s\searrow 0}(1 + A^{-1}s\cE_L(\varphi,\varphi) + o(s))^\frac{1}{s}}
 {\lim_{s\searrow 0}(
 {1+s\frac{p'(0)}{A} \int_\Omega |\varphi|^2\ln |\varphi|\, dx+o(s)})^\frac{1}{s}}
\right)^\frac{1}{p'(0)}
\\ 
 &= \left(\frac{
 e^{A^{-1}\cE_L(\varphi,\varphi)}
 }{
e^{p'(0)A^{-1} \int_\Omega |\varphi|^2\ln |\varphi|}}
\right)^\frac{1}{p'(0)} 
=\exp\left(\frac{\cE_L(\varphi,\varphi)-p'(0)\int_\Omega\ln (|\varphi|)|\varphi|^2}{p'(0)|\varphi|_2^2}\right)>0.
\end{align*}
This implies that the function $s\mapsto t^s_\varphi$ has a continuous extension to the compact set $[0,\tfrac{1}{4}]$, and therefore $\sup_{s\in(0,\frac{1}{4})}t_\varphi^s<\infty$, as claimed.
\end{proof}

\begin{remark}
In Lemma~\ref{lem:A1A2} below we show that, in fact, $t_\varphi^0\varphi\in \cN_0$ with $\mu=p'(0)$.
\end{remark}

The following result provides an ``intermediate" logarithmic-type Sobolev inequality.

\begin{lemma}\label{prebd}
Let $s\in(0,\tfrac{1}{4})$ and let $\varphi\in C^\infty_c(\Omega)$. It holds that
\begin{align*}
\int_0^1\frac{4N}{(N-2s\tau)^2}\int_\Omega|\varphi|^{2^*_{s\tau}}\ln|\varphi|\ dx\,d\tau\leq \int_0^1 k'(s\tau)\|\varphi\|_{s\tau}^{2^*_{s\tau}}
+2k(s\tau)\int_{\R^N}|\xi|^{2s\tau}\ln|\xi| |\widehat \varphi(\xi)|^2\ d\xi\,d\tau,
\end{align*}
where $k(s):=(\kappa_{N,s})^{\frac{2^*_s}{2}}$ and with $\kappa_{N,s}$ as in Theorem~\ref{thm:sobolev}. Moreover, if
\begin{align}\label{C:def}
\|\varphi\|_s^2<C\qquad \text{ for some $C>0$,}
\end{align}
then there is $C_1=C_1(C,\Omega)>0$ such that 
\begin{align*}
\int_0^1\int_{\{|\varphi|\geq 1\}}|\varphi|^{2^*_{s\tau}}\ln|\varphi|\ dx\,d\tau\leq C_1+\frac{N}{4}\int_0^1 k(s\tau)\int_{\R^N}|\xi|^{2s\tau}\ln(|\xi|^2) |\widehat \varphi(\xi)|^2\ d\xi\,d\tau.
\end{align*}
\end{lemma}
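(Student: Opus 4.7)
The plan is to derive the first inequality from the fractional Sobolev inequality of Theorem \ref{thm:sobolev} and the fundamental theorem of calculus. First, I would raise Sobolev's inequality to the power $2^*_u/2$ to obtain $|\varphi|_{2^*_u}^{2^*_u} \leq k(u)\|\varphi\|_u^{2^*_u}$ for all $u\in(0,1/4)$, with equality at $u=0$ (since $k(0)=1$ and $2^*_0=2$, both sides collapse to $|\varphi|_2^2$). The claim's left-hand side, after the change of variable $u=s\tau$ and dividing by $s$, is exactly $\frac{1}{s}\int_0^s \frac{d}{du}|\varphi|_{2^*_u}^{2^*_u}\,du = \frac{1}{s}(|\varphi|_{2^*_s}^{2^*_s} - |\varphi|_2^2)$, via the elementary identity $\frac{d}{du}|\varphi|_{2^*_u}^{2^*_u} = \frac{4N}{(N-2u)^2}\int_\Omega|\varphi|^{2^*_u}\ln|\varphi|\,dx$.

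For the right-hand side, I would observe that $2\int_{\R^N}|\xi|^{2u}\ln|\xi||\widehat\varphi|^2\,d\xi = A'(u)$, where $A(u):=\|\varphi\|_u^2$. After the change of variable, an integration by parts in $\int_0^s k(u)A'(u)\,du = k(s)\|\varphi\|_s^2 - |\varphi|_2^2 - \int_0^s k'(u)\|\varphi\|_u^2\,du$ allows rewriting the RHS as $\frac{1}{s}\bigl(k(s)\|\varphi\|_s^2 - |\varphi|_2^2 + \int_0^s k'(u)g(u)\,du\bigr)$, where $g(u):=\|\varphi\|_u^{2^*_u} - \|\varphi\|_u^2$ satisfies $g(0)=0$. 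The claim thus reduces to $|\varphi|_{2^*_s}^{2^*_s} \leq k(s)\|\varphi\|_s^2 + \int_0^s k'(u)g(u)\,du$, which I would deduce by combining Sobolev applied at $s$ ($|\varphi|_{2^*_s}^{2^*_s}\leq k(s)\|\varphi\|_s^2 + k(s)g(s)$) with a second integration by parts that relates $k(s)g(s)$ to $\int_0^s k'(u)g(u)\,du$, exploiting $g(0)=0$ and the monotonicity properties of the Sobolev deficit at intermediate $u\in[0,s]$.

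For the second inequality, I would split $\int_\Omega = \int_{\{|\varphi|\geq 1\}} + \int_{\{|\varphi|<1\}}$. On $\{|\varphi|<1\}$, the integrand $|\varphi|^{2^*_u}|\ln|\varphi||$ attains its maximum with value $1/(2^*_u e)\leq 1/(2e)$, so its contribution to the $\tau$-integral is bounded by $C|\Omega|$ uniformly in $s\in(0,1/4)$ and absorbed into $C_1$. On $\{|\varphi|\geq 1\}$ the integrand is nonnegative, and since $\frac{4N}{(N-2s\tau)^2}\geq \frac{4}{N}$ for $s\tau\in[0,1/4]$, part 1 yields
\[
\frac{4}{N}\int_0^1\!\int_{\{|\varphi|\geq 1\}}|\varphi|^{2^*_{s\tau}}\ln|\varphi|\,dx\,d\tau \leq \int_0^1\!\Bigl[k'(s\tau)\|\varphi\|_{s\tau}^{2^*_{s\tau}} + 2k(s\tau)\!\int_{\R^N}\!|\xi|^{2s\tau}\ln|\xi||\widehat\varphi|^2 d\xi\Bigr]d\tau + C|\Omega|.
\]
The hypothesis $\|\varphi\|_s^2 < C$ and Lemma \ref{lem:sig:u} give $\|\varphi\|_{s\tau}^2\leq C'$ uniformly for $\tau\in[0,1]$, hence $\|\varphi\|_{s\tau}^{2^*_{s\tau}}$ is uniformly bounded; since $|k'|$ is bounded on $[0,1/4]$ by the expansion \eqref{kappa:eq}, the $k'$-term contributes a bounded amount absorbed into $C_1$. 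Multiplying by $N/4$ and using $2\ln|\xi|=\ln(|\xi|^2)$ yields the claim.

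The hard part will be the manipulation in the first inequality: the product-rule expansion of $\frac{d}{du}[k(u)\|\varphi\|_u^{2^*_u}]$ contains $\frac{d}{du}A(u)^{2^*_u/2}$, which by logarithmic differentiation produces an inconvenient term involving $\ln A(u)$ in addition to the clean $A'(u)$-term appearing on the claim's RHS. Reconciling these requires the nested integration-by-parts against $g(u)$ and leveraging $g(0)=0$ so that the unwanted contributions assemble into a sign-definite remainder, which in turn is controlled via Sobolev at intermediate scales.
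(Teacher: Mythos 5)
Your overall strategy is the same as the paper's: apply the fractional Sobolev inequality at each order and use the fundamental theorem of calculus, via what the paper calls the Sobolev deficit $G(s):=k(s)\|\varphi\|_s^{2^*_s}-|\varphi|_{2^*_s}^{2^*_s}$, which satisfies $G\geq 0$ and $G(0)=0$. The paper's proof is then extremely short: $\int_0^1 G'(s\tau)\,d\tau=G(s)/s\geq 0$, and the first displayed inequality is read off from the expression given for $G'$. Your reformulation via integration by parts, which rewrites the paper's right-hand side as $\tfrac1s\bigl(k(s)\|\varphi\|_s^2-|\varphi|_2^2+\int_0^s k'(u)g(u)\,du\bigr)$ with $g(u):=\|\varphi\|_u^{2^*_u}-\|\varphi\|_u^2$, is algebraically correct and equivalent.

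The proposal does not close, however, and the gap is exactly where you flag ``the hard part.'' After reducing the claim to $|\varphi|_{2^*_s}^{2^*_s}\leq k(s)\|\varphi\|_s^2+\int_0^s k'(u)g(u)\,du$ and invoking Sobolev at $s$, what remains to show is $k(s)g(s)\leq\int_0^s k'(u)g(u)\,du$, which after a second integration by parts (using $g(0)=0$) is $\int_0^s k(u)g'(u)\,du\leq 0$. You appeal to ``monotonicity properties of the Sobolev deficit,'' but $g$ is \emph{not} the Sobolev deficit ($G$ is), and neither $g$ nor $g'$ has a sign: to leading order $g'(u)\approx\frac{2}{N}\|\varphi\|_u^2\ln\|\varphi\|_u^2$, which changes sign with $\|\varphi\|_u$. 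Indeed, replacing $\varphi$ by $\lambda\varphi$ with $\lambda\to\infty$ makes the left side of the reduced claim grow like $\lambda^{2^*_s}$ while the right side is $o(\lambda^{2^*_s})$, so the reduction you aim for cannot hold unconditionally; the nested integration by parts does not produce a sign-definite remainder.

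You have in fact put your finger on a real subtlety that the paper's own proof glosses over: the formula stated there for $G'(s)$ appears to omit the contribution from $\tfrac{d}{ds}\|\varphi\|_s^{2^*_s}$ coming from the $s$-dependence of the exponent, namely a term $k(s)\|\varphi\|_s^{2^*_s}\tfrac{4N}{(N-2s)^2}\ln\|\varphi\|_s$ and a factor $\tfrac{2^*_s}{2}\|\varphi\|_s^{2^*_s-2}$ on the Fourier integral, rather than the plain $2k(s)$. The FTC step $\int_0^1 G'(s\tau)\,d\tau\geq 0$ is of course valid with the \emph{true} $G'$, and the resulting inequality — which is what the rest of the paper really needs — carries those extra terms. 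Under the standing hypothesis $\|\varphi\|_s^2<C$ (and hence, by Lemma~\ref{lem:sig:u}, $\|\varphi\|_{s\tau}^2$ uniformly bounded in $\tau$), the extra $\ln\|\varphi\|_{s\tau}$ and $\|\varphi\|_{s\tau}^{2^*_{s\tau}-2}$ factors are bounded by constants depending only on $C$ and $\Omega$ and can be absorbed into $C_1$; this is the route to the second displayed inequality. Your treatment of the second inequality (splitting $\{|\varphi|<1\}$ and $\{|\varphi|\geq1\}$, bounding $\|\varphi\|_{s\tau}^{2^*_{s\tau}}$ via Lemma~\ref{lem:sig:u} and the boundedness of $k'$) matches the paper and is fine, but it presupposes the first inequality, which your argument does not establish. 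In short: you correctly identified the obstruction, but the proposed sign-definiteness fix does not work; the way out is not to prove the stated first inequality as an identity in $\varphi$, but to carry the full $G'$ and absorb the extra terms using the $\|\varphi\|_s^2<C$ hypothesis when passing to the second inequality.
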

\begin{proof}
Let $s\in(0,\frac{1}{4})$, $k(s):=(\kappa_{N,s})^{\frac{2^*_s}{2}}$, and $G(s):=k(s)\|\varphi\|_s^{2^*_s}-|\varphi|_{2^*_s}^{2^*_s}.$ Then $k\in C^1([0,\tfrac{1}{4}])$ and $G\in C^1((0,\tfrac{1}{4}))$ with
\begin{align*}
G'(s)=k'(s)\|\varphi\|_s^{2^*_s}
+2k(s)\int_{\R^N}|\xi|^{2s}\ln|\xi| |\widehat \varphi(\xi)|^2\ d\xi
-\frac{4N}{(N-2s)^2}\int_\Omega|\varphi|^{2^*_s}\ln|\varphi|\, dx.
\end{align*}
Note that $\lim_{t\to 0^+}G(t)=0$ and, by Theorem~\ref{thm:sobolev}, $G(t)\geq 0$ for $t\in(0,\tfrac{1}{4})$. Then $\int_0^1 G'(s\tau)\ d\tau\geq 0$, namely,
\begin{align}
J&:=\int_0^1\frac{4N}{(N-2s\tau)^2}\int_\Omega|\varphi|^{2^*_{s\tau}}\ln|\varphi|\ dx\,d\tau\notag\\
&\leq \int_0^1 k'(s\tau)\|\varphi\|_{s\tau}^{2^*_{s\tau}}
+2k(s\tau)\int_{\R^N}|\xi|^{2s\tau}\ln|\xi| |\widehat \varphi(\xi)|^2\ d\xi\,d\tau.\label{it:0}
\end{align}
By Lemma~\ref{lem:sig:u} and \eqref{C:def}, there is $M=M(C,\Omega)>0$ such that $\|\varphi\|^2_{s\tau}<M$ for all $\tau\in(0,1)$; but then, there is $C'=C'(C,\Omega)>0$ such that 
\begin{align}\label{it:1}
\int_0^1|k'(s\tau)|\|\varphi\|_{s\tau}^{2^*_{s\tau}}\, d\tau<C'.
\end{align}
Moreover,
\begin{align}
J&=\int_0^1\frac{4N}{(N-2s\tau)^2}\int_{\Omega\cap \{|\varphi|\geq 1\}}|\varphi|^{2^*_{s\tau}}\ln|\varphi|\ dx\,d\tau+\int_0^1\frac{4N}{(N-2s\tau)^2}\int_{\Omega\cap\{|\varphi|<1\}}|\varphi|^{2^*_{s\tau}}\ln|\varphi|\ dx\,d\tau\notag\\
&\geq \frac{4}{N} \int_0^1\int_{\Omega\cap \{|\varphi|\geq 1\}}|\varphi|^{2^*_{s\tau}}\ln|\varphi|\ dx\,d\tau-|\Omega|c_1,\label{it:2}
\end{align}
where $c_1:=\sup_{t\in(0,1),\theta\in(0,\frac{1}{4})}\frac{4N}{(N-2\theta)^2} t^{2^*_{\theta}}|\ln|t||$.  By~\eqref{it:0},~\eqref{it:1}, and~\eqref{it:2},
\begin{align*}
\int_0^1\int_{\Omega\cap \{|\varphi|\geq 1\}}|\varphi|^{2^*_{s\tau}}\ln|\varphi|\ dx\,d\tau\leq 
\frac{N}{4}\left(
|\Omega|c_1+C'+\int_0^1 2k(s\tau)\int_{\R^N}|\xi|^{2s\tau}\ln|\xi| |\widehat \varphi(\xi)|^2\ d\xi\,d\tau
\right),
\end{align*}
and the claim follows.
\end{proof}

The next result shows that uniform bounds in $\cN_s$ yield uniform bounds in $\mH(\Omega)$.  Note that Lemma~\ref{lem:cEbd} provides such a bound only for $s$ far away from zero.  For $s$ near zero a finer analysis is required. 

\begin{lemma}\label{lem:vip}
Let $C_0>0$, $s\in(0,\frac{1}{4})$, $p_s=p(s)$ with $p$ as in~\eqref{p:hyp}, and let $\varphi\in \cN_s$ be such that $\|\varphi\|_s^2<C_0$. Then there is $C=C(C_0,p,\Omega)>0$ such that 
\begin{align*}
\|\varphi\|^2=\cE(\varphi,\varphi)<C.
\end{align*}
\end{lemma}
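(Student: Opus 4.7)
The plan is to split $s$ into two regimes and handle them separately. For $s\in[s_0,\tfrac{1}{4})$ with any fixed $s_0>0$, Lemma~\ref{lem:cEbd} immediately gives $\|\varphi\|^2\leq C_2|\varphi|_2^2+s_0^{-1}C_0$, and $|\varphi|_2^2$ is uniformly controlled by Lemma~\ref{lem:sig:u}. So the delicate regime is $s\in(0,s_0)$ for some small $s_0$ to be chosen.

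For such $s$, I would first reduce the problem via Lemma~\ref{lem:5.5} applied to the definition~\eqref{cE} of $\cE_L$: this yields $\|\varphi\|^2\leq \cE_L(\varphi,\varphi)+(c_N|\Omega|+|\rho_N|)|\varphi|_2^2$, so it suffices to bound $\cE_L(\varphi,\varphi)$ uniformly. To this end I would set up a bootstrap combining the Nehari identity with two second-order Taylor expansions. The expansion of $s\mapsto|\xi|^{2s}$ at $s=0$ gives
\begin{equation*}
\|\varphi\|_s^2-|\varphi|_2^2 \;=\; s\,\cE_L(\varphi,\varphi)+R_F,
\qquad R_F:=\int_0^s(s-\tau)\int_{\R^N}(2\ln|\xi|)^2|\xi|^{2\tau}|\widehat\varphi(\xi)|^2\,d\xi\,d\tau\geq 0,
\end{equation*}
and the expansion of $p\mapsto|\varphi|^p$ at $p=2$ gives
\begin{equation*}
|\varphi|_{p_s}^{p_s}-|\varphi|_2^2 \;=\; (p_s-2)\int_\Omega|\varphi|^2\ln|\varphi|\,dx+R_r,
\qquad R_r\geq 0.
\end{equation*}
Since $\varphi\in\cN_s$ equates the two left-hand sides, I obtain
$s\,\cE_L(\varphi,\varphi)\leq (p_s-2)\int_\Omega|\varphi|^2\ln|\varphi|\,dx+R_r$. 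Inserting the sharp logarithmic Sobolev inequality of Proposition~\ref{log:prop}, which yields $\int_\Omega|\varphi|^2\ln|\varphi|\,dx\leq \tfrac{N}{4}\cE_L(\varphi,\varphi)+C$ with $C$ depending only on $|\varphi|_2^2$ and $\Omega$, and rearranging,
\begin{equation*}
\Big(s-\tfrac{N(p_s-2)}{4}\Big)\,\cE_L(\varphi,\varphi)\;\leq\;(p_s-2)\,C+R_r.
\end{equation*}
Hypothesis~\eqref{p:hyp} together with $p_s<2^*_s$ near $s=0$ forces $p(0)=2$ and $p'(0)\in(0,\tfrac{4}{N})$ (the endpoint cases are excluded by $p'(0)\notin\{0,\tfrac{4}{N}\}$). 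Hence $\tfrac{N(p_s-2)}{4s}\to\tfrac{Np'(0)}{4}<1$, and for $s_0$ small enough the left coefficient is at least $c_0 s$ with some $c_0>0$. Dividing, $\cE_L(\varphi,\varphi)\leq \tfrac{(p_s-2)C}{c_0 s}+\tfrac{R_r}{c_0 s}$, and the first summand is $O(1)$ since $(p_s-2)/s$ is bounded.

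The hard part will be controlling $R_r/s$ uniformly. Since $R_r\leq \tfrac{(p_s-2)^2}{2}\int_{\{|\varphi|\geq 1\}}|\varphi|^{p_s}(\ln|\varphi|)^2\,dx+O(s^2)$ and $2^*_s-p_s$ is itself of order $s$, a direct estimate of the dangerous integral via Lemma~\ref{ln:bd} only gives $O(1/s^2)$, leaving $R_r/s=O(1/s)$, which is insufficient. This is exactly where Lemma~\ref{prebd} — the intermediate logarithmic-type Sobolev inequality interpolating between the fractional Sobolev inequality and the logarithmic Sobolev inequality — is indispensable. I would use it to trade the $(\ln|\varphi|)^2$ contribution against the averaged Fourier-side quantity $\int_0^1 k(s\tau)\int_{\R^N}|\xi|^{2s\tau}\ln(|\xi|^2)|\widehat\varphi(\xi)|^2\,d\xi\,d\tau$, which after the substitution $\sigma=s\tau$ rewrites as $\tfrac{1}{s}\int_0^s k(\sigma)\,\partial_\sigma\|\varphi\|_\sigma^2\,d\sigma$. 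Integration by parts, the Nehari identity $\|\varphi\|_s^2=|\varphi|_{p_s}^{p_s}$, and the uniform $\cH^\sigma_0$-bounds of Lemma~\ref{lem:sig:u} then reduce everything to already-controlled quantities ($\|\varphi\|_s^2\leq C_0$ and $|\varphi|_2^2\leq C'$), producing the sought $R_r/s=O(1)$. This closes the bootstrap and yields the desired uniform bound on $\cE_L(\varphi,\varphi)$, hence on $\cE(\varphi,\varphi)$.
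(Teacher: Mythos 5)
Your overall architecture matches the paper's: split $s\in[s_0,\tfrac14)$ from $s\in(0,s_0)$, dispose of the first regime via Lemma~\ref{lem:cEbd} and Lemma~\ref{lem:sig:u}, and for small $s$ combine the Nehari identity with a logarithmic Sobolev estimate, exploiting $\tfrac{N}{4}p'(0)<1$. The reduction $\|\varphi\|^2\le \cE_L(\varphi,\varphi)+C|\varphi|_2^2$ and the choice of $s_0$ so that the key coefficient is uniformly positive are also right.

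The gap is in the treatment of $R_r/s$. Your second-order Taylor expansion of $q\mapsto |\varphi|_q^q$ at $q=2$ produces a remainder $R_r$ whose integrand carries $(\ln|\varphi|)^2$, and Lemma~\ref{prebd} simply cannot control such a term: it only yields an estimate for the \emph{single-log} averaged quantity $\int_0^1\int_{\{|\varphi|\ge1\}}|\varphi|^{2^*_{s\tau}}\ln|\varphi|\,dx\,d\tau$. There is no two-log analogue available. Moreover, even if one trades one factor of $\ln|\varphi|$ for a small power via Lemma~\ref{ln:bd} and then invokes Lemma~\ref{prebd}, the plan is circular: integrating the Fourier-side right-hand side of Lemma~\ref{prebd} by parts gives
\begin{align*}
\frac1s\int_0^s k(\sigma)\,\partial_\sigma\|\varphi\|^2_\sigma\,d\sigma
= \frac{k(s)-1}{s}\|\varphi\|_s^2 + \cI - \frac1s\int_0^s k'(\sigma)\|\varphi\|_\sigma^2\,d\sigma
= \cI + O(1),
\end{align*}
where $\cI=\tfrac{1}{s}\big(\|\varphi\|_s^2-|\varphi|_2^2\big)$ is precisely the quantity controlling $\cE_L(\varphi,\varphi)$ that you are trying to bound, so $R_r/s$ cannot be estimated in isolation. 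The paper sidesteps both issues by staying at first order: it writes $\cI$ once as $\int_0^1\int_{\R^N}|\xi|^{2s\tau}\ln(|\xi|^2)|\widehat\varphi|^2\,d\xi\,d\tau$ (Fourier side) and once, via the Nehari identity, as $\int_0^1 p'(s\tau)\int_\Omega|\varphi|^{p(s\tau)}\ln|\varphi|\,dx\,d\tau$ (nonlinear side, one log only), bounds the nonlinear side by Lemma~\ref{prebd}, and then \emph{subtracts}, landing on $\int_0^1\int_{\R^N}h_s(\tau)|\xi|^{2s\tau}\ln(|\xi|^2)|\widehat\varphi|^2\,d\xi\,d\tau\le C_1$ with $h_s(\tau)\ge\delta>0$. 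The absorption must happen in one stroke through the sign of $h_s$; there is no intermediate $O(1)$ bound on a Taylor remainder to be had. To repair your argument, replace the second-order Taylor expansion of $|\varphi|_q^q$ by the first-order integral identity $|\varphi|_{p_s}^{p_s}-|\varphi|_2^2=\int_0^s p'(\sigma)\int_\Omega|\varphi|^{p(\sigma)}\ln|\varphi|\,dx\,d\sigma$ and carry out the rearrangement directly, which is exactly the paper's route.
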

\begin{proof} Assume first that $\varphi\in C^\infty_c(\Omega)\cap \cN_s$. For $\tau\in(0,1)$ and $\sigma\in(0,\frac{1}{4})$, let 
\begin{align*}
h_\sigma(\tau):=1-\frac{N}{4}(\kappa_{N,\sigma\tau})^{\frac{2^*_{\sigma\tau}}{2}}\sup_{(0,\sigma)}|p'|,     
\end{align*}
where $\kappa_{N,s}$ is given in \eqref{kappa:eq}.
Note that $p'(0)\in(0,\frac{4}{N})$ (by \eqref{p:hyp}) and $\kappa_{N,\sigma}\to 1$ as $\sigma\to 0^+$ (by \eqref{kappa:eq}). Therefore there is $s_0\in(0,\frac{1}{4})$ such that, if $s\in(0,s_0)$, then
\begin{align}\label{s0}
p'(s\tau)>0\quad \text{ for }\tau\in(0,1)\qquad \text{ and }\qquad 
\delta:=\min_{\tau\in(0,1)}h_s(\tau)\in(0,1).
\end{align}
For $s\in[s_0,\frac{1}{4})$, the claim follows from Lemmas  \ref{lem:cEbd} and \ref{lem:sig:u}.

To show the claim in $(0,s_0)$, let $s\in(0,s_0)$.  Note that
\begin{align}
    \cI:=\frac{\|\varphi\|_s^2-|\varphi|_2^2}{s}
    =\int_0^1\int_{\R^N}|\xi|^{2s\tau}\ln(|\xi|^2)|\widehat \varphi(\xi)|^2\ d\xi \ d\tau.\label{Iphi}
\end{align}
On the other hand, using that $\varphi\in\cN_s$ and \eqref{s0},
\begin{align*}
    \cI&=\frac{|\varphi|_{p_s}^{p_s}-|\varphi|_2^2}{s}
    =\int_0^1 p'(s\tau) \int_\Omega |\varphi|^{p(s\tau)} \ln |\varphi|\ dx
    \ d\tau\\
    &=\int_0^1 p'(s\tau) \int_{\{|\varphi|<1\}} |\varphi|^{p(s\tau)} \ln |\varphi|\ dx
    \ d\tau
    +\int_0^1 p'(s\tau) \int_{\{|\varphi|\geq 1\}} |\varphi|^{p(s\tau)} \ln |\varphi|\ dx
    \ d\tau\\
    &\leq \sup_{(0,s)}|p'|\int_0^1 \int_{\{|\varphi|\geq 1\}} |\varphi|^{2^*_{s\tau}} \ln |\varphi|\ dx
    \ d\tau.
\end{align*}
Then, by Lemma~\ref{prebd}, there is $C_1=C_1(C_0,\Omega)>0$ such that
\begin{align}
\cI\leq C_1+\sup_{(0,s)}|p'|\frac{N}{4}\int_0^1 (\kappa_{N,s\tau})^{\frac{2^*_{s\tau}}{2}}\int_{\R^N}|\xi|^{2s\tau}\ln(|\xi|^2) |\widehat \varphi(\xi)|^2\ d\xi\,d\tau.\label{Iphi2}
\end{align}
Furthermore, by~\eqref{Iphi} and~\eqref{Iphi2},
\begin{align}\label{it:3}
\int_0^1\int_{\R^N}h_s(\tau)|\xi|^{2s\tau}\ln|\xi|^2 |\widehat \varphi(\xi)|^2\ d\xi\,d\tau\leq C_1.
\end{align}
Note that, if $C_2:=\int_{\{|\xi|\leq 1\}}|\ln|\xi|^2| \ d\xi$,
\begin{align}
\int_0^1\int_{\{|\xi|\leq 1\}}h_s(\tau)|\xi|^{2s\tau}\ln|\xi|^2 &|\widehat \varphi(\xi)|^2\ d\xi\,d\tau\notag\\
&=-\int_0^1\int_{\{|\xi|\leq 1\}}h_s(\tau)|\xi|^{2s\tau}|\ln|\xi|^2| |\widehat \varphi(\xi)|^2\ d\xi\,d\tau\notag\\
&\geq -\int_{\{|\xi|\leq 1\}}|\ln|\xi|^2| |\widehat \varphi(\xi)|^2\ d\xi\notag\\
&\geq-\delta\int_{\{|\xi|\leq 1\}}|\ln|\xi|^2| |\widehat \varphi(\xi)|^2\ d\xi
-(1-\delta)|\widehat \varphi|_\infty^2\int_{\{|\xi|\leq 1\}}|\ln|\xi|^2| \ d\xi\notag\\
&\geq\delta\int_{\{|\xi|\leq 1\}}\ln|\xi|^2 |\widehat \varphi(\xi)|^2\ d\xi
-C_2(1-\delta)|\varphi|_1^2
\label{it:4}
\end{align}
and, by \eqref{s0},
\begin{align}\label{it:5}
\int_0^1\int_{\{|\xi|>1\}}h_s(\tau)|\xi|^{2s\tau}\ln|\xi|^2 |\widehat \varphi(\xi)|^2\ d\xi\,d\tau
\geq \delta \int_{\{|\xi|>1\}}\ln|\xi|^2 |\widehat \varphi(\xi)|^2\ d\xi.
\end{align}
But then, by~\eqref{it:3},~\eqref{it:4}, and~\eqref{it:5},
\begin{align}\label{it:7}
\cE_L(\varphi,\varphi)=\int_{\R^N}\ln|\xi|^2 |\widehat \varphi(\xi)|^2\ d\xi\leq \frac{C_1}{\delta}
    +C_2\frac{1-\delta}{\delta}|\Omega||\varphi|_2^2.
\end{align}
By Lemma \ref{lem:sig:u}, there is $C_3=C_3(\Omega,C_0)>0$ such that 
\begin{align}\label{it:8}
|\varphi|_2^2<C_3.
\end{align}
 Moreover, by~\eqref{cEuu} and Lemma~\ref{lem:5.5},
 \begin{align}\label{it:9}
 \cE_L(\varphi,\varphi)\geq \|\varphi\|^2-C_4|\varphi|_2^2
 \end{align}
 for some $C_4=C_4(\Omega)>0$. The claim now follows, for $\varphi\in C^\infty_c(\Omega)\cap \cN_s$, from~\eqref{it:7},~\eqref{it:8}, and~\eqref{it:9}. The general case $\varphi\in\cN_s$ follows from the density of $C^\infty_c(\Omega)$ in $\cH^s_0(\Omega)$ and Lemma~\ref{lem:cEbd}.
\end{proof}

We are ready to show the main result in this section. 

\begin{proposition}\label{cor:Ebd}
Let $s\in(0,\tfrac{1}{4})$, $p_s=p(s)$ with $p$ as in~\eqref{p:hyp}, and let $u_{s}\in\cN_s$ be a least-energy solution of~\eqref{subcritical:intro}. There is $C=C(p,\Omega)>0$ such that 
\begin{align*}
\|u_s\|^2=\cE(u_{s},u_s)<C\qquad \text{ for all }s\in(0,\tfrac{1}{4}).
\end{align*}
\end{proposition}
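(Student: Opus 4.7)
The plan is to combine three ingredients that have already been established: the variational characterization of least-energy solutions as minimizers of $\|\cdot\|_s^2$ on $\cN_s$ (see \eqref{les}), the projection estimate of Lemma~\ref{lem:t0}, and the crucial upgrade from a uniform $\cH^s_0$-bound to a uniform $\mH(\Omega)$-bound provided by Lemma~\ref{lem:vip}. The strategy is first to prove a uniform bound $\|u_s\|_s^2 \leq C_0$ and then feed this into Lemma~\ref{lem:vip}.

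To obtain the uniform $\cH^s_0$-bound, I fix once and for all any test function $\varphi \in C^\infty_c(\Omega)\setminus\{0\}$, independent of $s$. By Lemma~\ref{lem:t0}, the projected function $t_\varphi^s \varphi$ belongs to $\cN_s$ for every $s \in (0,\tfrac{1}{4})$ and the scalar $s \mapsto t_\varphi^s$ extends continuously to $[0,\tfrac{1}{4}]$, so
\begin{align*}
M := \sup_{s \in (0,\frac{1}{4})} t_\varphi^s < \infty.
\end{align*}
Since $\varphi \in C^\infty_c(\R^N)$, its Fourier transform $\widehat\varphi$ decays faster than any polynomial; splitting the Fourier representation of $\|\varphi\|_s^2$ over $\{|\xi|\leq 1\}$ and $\{|\xi|>1\}$ and using $|\xi|^{2s} \leq |\xi|^{1/2}$ on the outer region for $s \in (0,\tfrac{1}{4})$ yields a uniform bound
\begin{align*}
K := \sup_{s \in (0,\frac{1}{4})} \|\varphi\|_s^2 \leq |\widehat\varphi|_2^2 + \int_{|\xi|>1} |\xi|^{\frac{1}{2}} |\widehat\varphi(\xi)|^2\, d\xi < \infty.
\end{align*}

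Now, because $u_s$ is a least-energy solution and $t_\varphi^s \varphi \in \cN_s$, the minimization property \eqref{les} gives
\begin{align*}
\|u_s\|_s^2 \leq \|t_\varphi^s \varphi\|_s^2 = (t_\varphi^s)^2\, \|\varphi\|_s^2 \leq M^2 K =: C_0,
\end{align*}
where $C_0$ depends only on $p$, $\Omega$, and the fixed $\varphi$. Applying Lemma~\ref{lem:vip} to $u_s \in \cN_s$ with this uniform bound then produces a constant $C = C(C_0, p, \Omega) = C(p, \Omega) > 0$ such that
\begin{align*}
\|u_s\|^2 = \cE(u_s, u_s) < C \qquad \text{for all } s \in (0, \tfrac{1}{4}).
\end{align*}

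I do not expect any serious obstacle in this proof: the delicate analytic work — namely the ``intermediate'' logarithmic Sobolev inequality of Lemma~\ref{prebd} and the resulting bootstrap from an $\cH^s_0$-bound to a bound on $\cE_L(\varphi,\varphi)$ — is already encapsulated in Lemma~\ref{lem:vip}. The present proposition only requires assembling these pieces, using the projection $t_\varphi^s \varphi$ onto $\cN_s$ as a comparison competitor to bound the least-energy level uniformly in $s$.
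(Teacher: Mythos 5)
Your proof is correct and follows essentially the same route as the paper's: fix $\varphi\in C^\infty_c(\Omega)\setminus\{0\}$, use the projection $t_\varphi^s\varphi$ and Lemma~\ref{lem:t0} together with \eqref{les} to get a uniform bound $\|u_s\|_s^2\leq C_0$, and then invoke Lemma~\ref{lem:vip}. The only addition is your explicit Fourier-side verification that $\sup_{s\in(0,1/4)}\|\varphi\|_s^2<\infty$, which the paper leaves implicit.
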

\begin{proof}
Let $\varphi\in C^\infty_c(\Omega)\backslash \{0\}$.  By~\eqref{les} and Lemma~\ref{lem:t0},
\begin{align}\label{C0}
\|u_s\|_s^2=\inf_{v \in \cN_s}\|v\|_s^2\leq (t^s_\varphi)^2\|\varphi\|_s^2\leq \sup_{s\in(0,\frac{1}{4})}(t^s_\varphi)^2\|\varphi\|_s^2=:C_0<\infty.
\end{align}
With this bound, the result follows from Lemma~\ref{lem:vip}.
\end{proof}

\section{Existence of a least-energy solution of the limiting problem}\label{Sec:MT2}
In this section we show Theorem~\ref{main:thm:2}.  In the following, we use that, by \cite[Theorem 1.4]{CW19},
\begin{align}\label{thm:eigen_ll}
\lambda_1^L:=\min\{\cE_{L}(u,u): u\in\mathbb H(\Omega), \ |u|_2=1\}\in\R.    
\end{align}

Note that $\lambda_1^L$ can be zero or negative. In fact, by \cite[Corollary 1.10]{CW19}, $\lambda_1^L\leq \ln(\lambda_1)$, where $\lambda_1$ is the first Dirichlet eigenvalue of $(-\Delta)$ in $\Omega$. 

\begin{lemma}\label{lem:bound_below_nehari} There are $c_0>0$ and $c_1>0$ such that $|u|_2\geq c_0$ and $\|u\|>c_1$ for every $u\in\mathcal N_0(\Omega)$.
\end{lemma}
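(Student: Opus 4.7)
The plan is in two steps: first I would establish the $L^2$ lower bound $|u|_2 \geq c_0$ by combining the sharp logarithmic Sobolev inequality of Proposition~\ref{log:prop} with the Nehari identity, and then deduce the bound $\|u\| > c_1$ as an immediate consequence of the continuous embedding $\mathbb{H}(\Omega) \hookrightarrow L^2(\Omega)$ implicit in \eqref{comp:em}.

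For the $L^2$ bound, given $u \in \cN_0$ I would normalize by setting $v := u/|u|_2$, so that $|v|_2 = 1$, and denote $A := \int_\Omega v^2 \ln(v^2)\, dx$. Expanding $\ln|u| = \ln|u|_2 + \ln|v|$, together with the homogeneity $\cE_L(u,u) = |u|_2^2\,\cE_L(v,v)$ and the Nehari identity $\cE_L(u,u) = \mu\int_\Omega u^2\ln|u|\, dx$, yields
\[
\cE_L(v,v) = \mu\ln|u|_2 + \tfrac{\mu}{2}A.
\]
Applying \eqref{eq:ineq_becker} to $v$ (noting that $\ln(|v|_2^2) = 0$) gives $\tfrac{2}{N}A \leq \cE_L(v,v) + a_N$, and substituting produces
\[
\tfrac{4 - N\mu}{2N}\, A \leq \mu\ln|u|_2 + a_N,
\]
where the coefficient on the left is strictly positive because $\mu \in (0, 4/N)$.

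To convert this into a lower bound for $|u|_2$ I would bound $A$ from below by Jensen's inequality: since $v^2\, dx$ is a probability measure on $E := \{v \neq 0\}$ and $\ln$ is concave,
\[
-A = \int_E v^2\, \ln(v^{-2})\, dx \leq \ln\!\left(\int_E v^{-2}\, v^2\, dx\right) = \ln|E| \leq \ln|\Omega|,
\]
so $A \geq -\ln|\Omega|$. Combining with the previous display yields an explicit constant $c_0 = c_0(N,\mu,\Omega) > 0$ such that $|u|_2 \geq c_0$ for every $u \in \cN_0$.

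The second bound is then a one-liner: the compact embedding \eqref{comp:em} is in particular continuous, so there exists $K = K(\Omega) > 0$ with $|u|_2 \leq K\|u\|$ for every $u \in \mathbb{H}(\Omega)$, and therefore $\|u\| \geq c_0/K =: c_1 > 0$ for all $u \in \cN_0$. I do not anticipate any significant obstacle; the only delicate step is the Jensen bound on $A$, which is precisely what converts the logarithmic Sobolev \emph{upper} estimate for $\cE_L$ into a \emph{lower} estimate for $|u|_2$, and the whole argument hinges on the positivity of $(4-N\mu)/(2N)$, i.e., on the ``subcritical'' condition $\mu < \tfrac{4}{N}$ emphasized in the introduction.
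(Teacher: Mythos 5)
Your proof is correct, and it takes a genuinely different route from the paper's. The paper works directly with $u$: setting $\mu = \tfrac{4}{N}\lambda$ and $F_0(u) := \cE_L(u,u) - \tfrac{2\lambda}{N}\int_\Omega u^2\ln(u^2)$ (which vanishes on $\cN_0$), it combines the logarithmic Sobolev inequality with the lower bound $\cE_L(u,u)\geq\lambda_1^L|u|_2^2$, where $\lambda_1^L$ is the first Dirichlet eigenvalue of $L_\Delta$ from \cite[Theorem~1.4]{CW19}, to show $F_0(u)>0$ whenever $|u|_2$ is below an explicit threshold, yielding the contradiction. You instead normalize $v := u/|u|_2$, use homogeneity and the Nehari identity to write $\cE_L(v,v) = \mu\ln|u|_2 + \tfrac{\mu}{2}A$ with $A := \int_\Omega v^2\ln(v^2)$, feed this into the logarithmic Sobolev inequality to obtain $\tfrac{4-N\mu}{2N}A \leq \mu\ln|u|_2 + a_N$, and then bound $A$ from below by Jensen's inequality ($A\geq -\ln|\Omega|$). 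The key difference is that your argument replaces the spectral input $\lambda_1^L > -\infty$ by the purely elementary Jensen bound, so it is self-contained once the logarithmic Sobolev inequality (Proposition~\ref{log:prop}) is in hand; the paper's version instead ties the constant $c_0$ to the eigenvalue $\lambda_1^L$, which may be geometrically informative. Both give an explicit $c_0$, and the passage to $\|u\|>c_1$ via the continuous embedding $\mathbb H(\Omega)\hookrightarrow L^2(\Omega)$ is identical in both proofs.
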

\begin{proof} 
Let $\mu=\frac{4}{N}\lambda$ for some $\lambda\in(0,1)$. For $u\in\mH(\Omega)$, let
$
F_0(u):=\cE_L(u,u)-\frac{2\lambda}{N}\int_{\Omega}u^2 \ln(u^2).
$
Then, by~\eqref{eq:ineq_becker},
\begin{equation}
F_0(u) \geq  (1-\lambda) \cE_L (u,u)-\left[2a_{N}+\frac{2}{N}\ln(|u|_2^2)\right]\lambda|u|_2^2 \qquad\textnormal{ for every } u\in\mathbb H(\Omega).
\end{equation}
Furthermore, using~\eqref{thm:eigen_ll},
\begin{equation}
F_0(u) \geq  \left[\frac{1-\lambda}{\lambda} \lambda_1^{L}-2a_{N}-\frac{2}{N}\ln(|u|_2^2)\right]\lambda|u|_2^2>0
\end{equation}
if $|u|_2 < \exp\left({\frac{(1-\lambda)}{4\lambda}N\lambda_1^L-\frac{N}{2}a_{N}}\right)=: c_0$. Therefore $|u|_2\geq c_0$ for every $u\in\mathcal N_0(\Omega)$. Finally, by~\eqref{comp:em}, there is $C>0$ such that $c_0\leq|u|_2\leq C\|u\|$ for all $u\in\mH(\Omega)$, and this ends the proof. 
\end{proof}

\begin{lemma}\label{lem:A1A2} For $w\in \mathbb H\setminus\{0\}$, let
\begin{align}\label{t0:def}
t^0_w:=\exp\left(\frac{\cE_L(w,w)-\mu\int_{\Omega}w^2\ln|w|}{\mu|w|_2^2}\right)    
\end{align}
and let $\alpha_w(s):=J_0(sw)$. Then, $\alpha_w^\prime(s)>0$ for $0<s<t^0_w$ and $\alpha_w^\prime(s)<0$ for $s>t^0_w$. In particular, $s\mapsto J_0(sw)$ achieves its unique maximum at $s=t^0_w$ and $t^0_w w\in\cN_0$.
\end{lemma}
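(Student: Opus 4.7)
The plan is to compute $\alpha_w(s) = J_0(sw)$ explicitly as a scalar function of $s > 0$ and reduce the statement to an elementary calculus exercise. Using bilinearity of $\mathcal{E}_L$ and the identity $(sw)^2\ln((sw)^2) = s^2\ln(s^2)\,w^2 + s^2 w^2 \ln(w^2)$, one obtains
\begin{align*}
\alpha_w(s) = \frac{s^2}{2}\,\mathcal{E}_L(w,w) - \frac{\mu s^2}{2}\ln(s)\,|w|_2^2 - \frac{\mu s^2}{4}\int_\Omega w^2(\ln(w^2)-1)\,dx.
\end{align*}
All terms are well-defined and smooth in $s>0$: the quadratic form and $L^2$ norm are finite on $\mathbb{H}(\Omega)$, and the integral $\int_\Omega w^2 \ln|w|$ is absolutely convergent for $w \in \mathbb{H}(\Omega)$ by the estimate derived in the proof of Lemma~\ref{I:C1} (which bounded $\int_\Omega w^2 |\ln|w||$ using the logarithmic Sobolev inequality~\eqref{eq:ineq_becker}).

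Next I would differentiate. After the term $-\frac{\mu s^2}{2}\ln(s)|w|_2^2$ contributes $-\mu s \ln(s)|w|_2^2 - \frac{\mu s}{2}|w|_2^2$, a cancellation with the $+\frac{\mu s}{2}|w|_2^2$ coming from the last term yields the clean expression
\begin{align*}
\alpha_w'(s) = s\left[\,\mathcal{E}_L(w,w) - \mu\int_\Omega w^2 \ln|w|\,dx - \mu\ln(s)\,|w|_2^2\,\right].
\end{align*}
Since $\mu|w|_2^2 > 0$, the bracket is strictly decreasing in $s$, so it has a unique zero, which by direct algebra is $s = t_w^0$ as defined in~\eqref{t0:def}. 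Consequently the bracket is positive on $(0,t_w^0)$ and negative on $(t_w^0,\infty)$, which, combined with $s>0$, gives the desired sign of $\alpha_w'$. Hence $s\mapsto J_0(sw)$ attains its unique maximum at $s = t_w^0$.

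It remains to verify $t_w^0 w \in \mathcal{N}_0$. By Lemma~\ref{I:C1}, $J_0 \in C^1(\mathbb{H}(\Omega))$ with $J_0'(u)v = \mathcal{E}_L(u,v) - \mu\int_\Omega uv\ln|u|$. Testing with $v = w$ at $u = sw$ gives $\alpha_w'(s) = J_0'(sw)w$, whence
\begin{align*}
s\,\alpha_w'(s) = J_0'(sw)(sw) = \mathcal{E}_L(sw,sw) - \mu\int_\Omega (sw)^2 \ln|sw|\,dx.
\end{align*}
At $s = t_w^0$ the left-hand side vanishes, which is exactly the Nehari condition defining $\mathcal{N}_0$ in~\eqref{N0:def}.

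The only even mildly subtle point is the bookkeeping with the logarithm when differentiating $s^2\ln(s^2 w^2)$; everything else is routine. There is no compactness or regularity obstacle here because the statement concerns scalar multiples of a fixed $w$, so the integrability of $w^2\ln|w|$ established in Lemma~\ref{I:C1} is the only analytic input required.
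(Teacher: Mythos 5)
Your proof is correct and follows essentially the same route as the paper: both compute $\alpha_w'(s)$, observe it equals $s$ times a bracket that is strictly decreasing in $s$ and vanishes exactly at $s=t^0_w$, and conclude. The paper reaches the derivative formula directly via $\alpha_w'(s)=J_0'(sw)w$ and Lemma~\ref{I:C1}, whereas you first expand $\alpha_w(s)$ explicitly and differentiate — a cosmetic difference — and you usefully make explicit the point that $\int_\Omega w^2\ln|w|$ is finite (via the estimate inside Lemma~\ref{I:C1}), which the paper leaves implicit.
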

\begin{proof} Note that 
\begin{align*}
\alpha'_w(s)
=\left(\cE_L(w,w)-\mu\int_\Omega w^2\ln|sw|\right)s
=\left(\cE_L(w,w)-\mu|w|_2^2\ln|s|-\mu\int_\Omega w^2\ln|w|\right)s
\end{align*}
for $w\in\mathbb H(\Omega)$. The claim now follows by the definition of $s_w$ and a direct computation.

\end{proof}

\begin{proposition}\label{prop:bd}
Let $(u_n)_{n\in\mathbb N}\subset \mathcal N_0$ be a sequence such that $\sup_{n\in\mathbb N} J_0(u_n)\leq C$ for some $C>0$. Then $(u_n)_{n\in\N}$ is bounded in $\mathbb H(\Omega)$ and, passing to a subsequence, there is $u\in\mathbb H(\Omega)\backslash\{0\}$ such that $u_n\weakly u$  weakly in $\mathbb H(\Omega)$ and $u_n\to u_0$ strongly in $L^2(\Omega)$ as $n\to\infty$.
\end{proposition}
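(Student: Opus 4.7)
The plan rests on a striking simplification of $J_0$ along $\cN_0$: if $u\in\cN_0$, the Nehari identity $\cE_L(u,u)=\mu\int_\Omega u^2\ln|u|\,dx=\tfrac{\mu}{2}\int_\Omega u^2\ln(u^2)\,dx$ reduces \eqref{J0def} to $J_0(u)=\tfrac{\mu}{4}|u|_2^2$. Hence the hypothesis $J_0(u_n)\le C$ immediately yields the $L^2$-bound $|u_n|_2^2\le 4C/\mu$ for the whole sequence.

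Next, I would apply the sharp logarithmic Sobolev inequality \eqref{eq:ineq_becker} to each $u_n$, obtaining $\tfrac{2}{N}\int_\Omega u_n^2\ln(u_n^2)\,dx\le \cE_L(u_n,u_n)+\tfrac{2}{N}\ln(|u_n|_2^2)|u_n|_2^2+a_N|u_n|_2^2$. Substituting the Nehari relation $\int u_n^2\ln(u_n^2)=\tfrac{2}{\mu}\cE_L(u_n,u_n)$ on the left and rearranging yields
\[
\left(\frac{4}{N\mu}-1\right)\cE_L(u_n,u_n)\le \frac{2}{N}\ln(|u_n|_2^2)|u_n|_2^2+a_N|u_n|_2^2.
\]
The subcriticality condition $\mu<4/N$ makes the coefficient on the left strictly positive, and the previously obtained $L^2$-bound, together with the boundedness of $t\mapsto t\ln t$ on bounded intervals of $[0,\infty)$, makes the right-hand side uniformly bounded. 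Hence $\cE_L(u_n,u_n)$ is bounded above.

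To upgrade this $\cE_L$-bound to an $\mH(\Omega)$-bound I would invoke identity \eqref{cEuu}: $\|u_n\|^2=\cE_L(u_n,u_n)+c_N B(u_n,u_n)-\rho_N|u_n|_2^2$, where $B$ is the bilinear form $B(u,v)=\iint_{|x-y|\ge 1}\frac{u(x)v(y)}{|x-y|^N}\,dxdy$. By Lemma~\ref{lem:5.5}, $|B(u_n,u_n)|\le|\Omega||u_n|_2^2$, so all three terms on the right are already under control and $(u_n)$ is bounded in $\mH(\Omega)$. Reflexivity produces a subsequence with $u_n\weakto u$ weakly in $\mH(\Omega)$, and the compact embedding \eqref{comp:em} upgrades this to $u_n\to u$ strongly in $L^2(\Omega)$. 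Nontriviality of $u$ then follows directly from Lemma~\ref{lem:bound_below_nehari}: the uniform lower bound $|u_n|_2\ge c_0>0$ passes to the $L^2$-limit to give $|u|_2\ge c_0>0$.

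The main obstacle is the second step, where the subcriticality hypothesis $\mu<4/N$ and the sharp constant $2/N$ appearing in the logarithmic Sobolev inequality must align exactly to produce the positive coefficient $\tfrac{4}{N\mu}-1$. Without this quantitative match the $\cE_L$-term cannot be absorbed, which makes transparent why the critical case $\mu=4/N$ lies outside the reach of this method. A minor auxiliary point is that one should check that $\int_\Omega u_n^2\ln(u_n^2)\,dx$ is a priori finite in order to use it as an equation rather than an inequality; this is ensured by the log-Sobolev inequality itself, which bounds $\int u^2|\ln u^2|$ in terms of $\cE_L(u,u)$ and $|u|_2^2$ for every $u\in\mH(\Omega)$.
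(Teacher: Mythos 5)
Your proof is correct and follows essentially the same approach as the paper's: derive $|u_n|_2^2\le 4C/\mu$ from the reduction $J_0(u_n)=\tfrac{\mu}{4}|u_n|_2^2$ on $\cN_0$, bound $\cE_L(u_n,u_n)$ via the logarithmic Sobolev inequality combined with the subcriticality $\mu<\tfrac{4}{N}$, upgrade to an $\mH(\Omega)$-bound using \eqref{cEuu} and Lemma~\ref{lem:5.5}, and conclude by compactness and Lemma~\ref{lem:bound_below_nehari}. The middle manipulation—substituting the Nehari identity $\int u_n^2\ln(u_n^2)=\tfrac{2}{\mu}\cE_L(u_n,u_n)$ directly into the log-Sobolev inequality—is an algebraic reformulation of the paper's lower bound on $J_0$, and your coefficient $\tfrac{4}{N\mu}-1$ is (up to multiplication by $\tfrac{N\mu}{4}$) the same positive factor.
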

\begin{proof}
Note that $J_0(u_n)=\frac{\mu}{4}|u_n|_2^2$, and therefore $\sup_{n\in\N}|u_n|_2^2\leq \frac{4}{\mu}C=:C_1$. Moreover, by Proposition~\ref{log:prop},
\begin{align*}
J_0(u_n)\geq \left(1-\frac{N\mu}{4}\right)\cE_L(u_n,u_n)-\frac{\mu}{2}\ln(|u_n|_2^2)|u_n|_2^2-a_N\frac{N\mu}{4}|u_n|^2_2,
\end{align*}
which yields that 
$\sup_{n\in\N}\cE_L(u_n,u_n)
\leq C+\sup_{t\in[0,C_1]}\left(\frac{\mu}{2}|\ln(t)|t+|a_N|\frac{N\mu}{4}t\right)=:C_2
$. By Lemma~\ref{lem:5.5},
\begin{align*}
C_2\geq \cE_L(u_n,u_n)\geq \|u_n\|^2 - (|\Omega|+|\rho_N|)C_1,
\end{align*}
which implies that $\sup_{n\in \N}\|u_n\|<\infty$.  Then, by~\eqref{comp:em}, passing to a subsequence, there is $u\in\mH(\Omega)$ such that $u_n\weakly u$ in $\mathbb H(\Omega)$ and $u_n\to u$ in $L^2(\Omega)$. Finally, by Lemma~\ref{lem:bound_below_nehari}, there is $c_0>0$ such that $|u_0|_2=\lim_{n\to\infty}|u_n|_2\geq c_0>0$ and therefore $u_0\neq 0$.
\end{proof}

\begin{lemma}\label{lem:L2conv}
Let $u_0\in L^2(\Omega)$ and let $(u_{k})_{k\in\N}\subset L^2(\Omega)$ be such that $u_k\to u_0$ in $L^2(\Omega)$ as $k\to\infty$. If $(\alpha_k)_{k\in\N}\subset[0,\tfrac{1}{2})$ is such that $\lim_{k\to\infty}\alpha_k=0$, then, passing to a subsequence,
\begin{align*}
\lim_{k\to\infty}\int_\Omega \ln(u_k^2)|u_k|^{\alpha_k}u_k\varphi \, dx
=\int_\Omega \ln(u_0^2)u_0\varphi \, dx\qquad \text{ for all }\varphi\in C^\infty_c(\Omega).
\end{align*}
\end{lemma}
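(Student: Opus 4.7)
The plan is to apply the dominated convergence theorem after extracting a suitable subsequence and dispatching the integrand through a split of $\Omega$ according to the size of $|u_k|$.

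First, since $u_k\to u_0$ in $L^2(\Omega)$, a standard argument lets me extract a subsequence (still denoted $(u_k)_{k\in\N}$) such that $u_k\to u_0$ almost everywhere in $\Omega$ and $|u_k|\leq g$ a.e.\ for some fixed $g\in L^2(\Omega)$. Next I establish the pointwise convergence of the integrand. On $\{u_0\neq 0\}$, continuity of the map $(t,\alpha)\mapsto \ln(t^2)|t|^{\alpha}t$ on $(\R\setminus\{0\})\times[0,\tfrac{1}{2})$, combined with $\alpha_k\to 0$, yields $\ln(u_k^2)|u_k|^{\alpha_k}u_k\to \ln(u_0^2)u_0$ a.e. At points where $u_0=0$ and $u_k\to 0$, the elementary bound $|t|^{1+\alpha}|\ln(t^2)|\to 0$ as $t\to 0$ (uniformly for $\alpha\in[0,\tfrac{1}{2}]$) forces the integrand to vanish, matching $\ln(u_0^2)u_0$ under the convention $0\ln 0 = 0$.

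The crux is producing a uniform $L^1$ majorant, and this is where Lemma~\ref{ln:bd} enters. Writing
\begin{equation*}
\bigl|\ln(u_k^2)\bigr|\,|u_k|^{\alpha_k}|u_k|=2|u_k|^{1+\alpha_k}\bigl|\ln|u_k|\bigr|,
\end{equation*}
I split $\Omega=\{|u_k|\geq 1\}\cup\{|u_k|<1\}$. On the first set, Lemma~\ref{ln:bd} with $\alpha:=1+\alpha_k$ and $\beta:=2$ (admissible since $\alpha_k<\tfrac{1}{2}$) gives
\begin{equation*}
\ln(u_k^2)\,|u_k|^{1+\alpha_k}\leq \tfrac{2}{1-\alpha_k}\,|u_k|^2\leq 4g^2.
\end{equation*}
On the second set, $|u_k|^{\alpha_k}\leq 1$ reduces the quantity to $2|u_k|\bigl|\ln|u_k|\bigr|\leq 2/e$, since $\sup_{t\in(0,1)}2t|\ln t|=2/e$. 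Combining these with $|\varphi|\leq \|\varphi\|_\infty$ and the boundedness of $\Omega$, I obtain the uniform bound
\begin{equation*}
\bigl|\ln(u_k^2)\,|u_k|^{\alpha_k}\,u_k\,\varphi\bigr|\leq \|\varphi\|_\infty\bigl(4g^2+\tfrac{2}{e}\,\chi_\Omega\bigr)\in L^1(\Omega),
\end{equation*}
and the dominated convergence theorem yields the stated limit.

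The only delicate aspect is that $\ln$ is simultaneously singular at $0$ and unbounded at $\infty$, and that the exponent $\alpha_k$ in $|u_k|^{\alpha_k}$ varies with $k$. Both issues are neatly absorbed by the split above: near infinity the trivial tail $|u_k|^{\alpha_k}\leq |u_k|^{1/2}$ could a priori be troublesome, but Lemma~\ref{ln:bd} converts the logarithmic factor into a small power so that everything collapses into $4g^2$; near zero the logarithmic singularity is tamed by the $|u_k|^{1+\alpha_k}$ factor, which vanishes faster than $|\ln|u_k||$ blows up. I do not anticipate any additional obstacle.
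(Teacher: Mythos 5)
Your proof is correct and follows essentially the same strategy as the paper's: pass to a subsequence with a.e.\ convergence and an $L^2$ majorant, split by $|u_k|\lessgtr 1$, use Lemma~\ref{ln:bd} to control the logarithm on the large set, and apply dominated convergence. The only cosmetic difference is that you build a single dominating function and invoke DCT once over all of $\Omega$, whereas the paper treats the two pieces $\{|u_k|\le 1\}$ and $\{|u_k|>1\}$ as separate DCT applications.
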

\begin{proof}
Note that $\sup_{t\in(0,1),\,k\in\N}|t|^{\alpha_k+1}|\ln(t^2)|<\infty$ and, since $u_k\to u_0$ in $L^2(\Omega)$ as $k\to\infty$ by assumption, passing to a subsequence we have that $u_k\to u_0$ a.e. in $\Omega$ as $k\to\infty$.  Then, since $\Omega$ is bounded, we can use the dominated convergence theorem to obtain that
\begin{align}
\lim_{k\to\infty}\int_{\{|u_{k}|\leq 1\}} \ln (u_{k}^2) 
|u_{k}|^{\alpha_k}u_{k}\varphi\ dx
=\int_{\{|u_{0}|\leq 1\}} \ln (u_{0}^2)u_{0}\varphi\ dx.\label{bd0}
\end{align}
On the other hand, since $u_{k}\to u_0$ in $L^2(\Omega)$ as $k\to\infty$, there is a majorant $U\in L^2(\Omega)$ (see \emph{e.g.} \cite[Lemma A.1]{W96}) such that, passing to a subsequence, 
\begin{align*}
|u_{k}|<U\qquad \text{in $\Omega$ for all $k\in\N$.}     
\end{align*}
Moreover, on the set $\{|u_{k}|>1\}$, we can use Lemma~\ref{ln:bd} (with $\alpha=\frac{3}{2}<2=\beta$) to obtain that
\begin{align*}
\ln (u_{k}^2)|u_{k}|^{\alpha_k}u_{k}\varphi
\leq\ln (u_{k}^2)|u_{k}|^{\frac{3}{2}}|\varphi|\leq 4|u_{k}|^2|\varphi|<4|U|^2\|\varphi\|_\infty\qquad \text{in $\{|u_k|>1\}$ for all }k\in\N.
\end{align*}
By dominated convergence, then
\begin{align*}
\lim_{k\to\infty}\int_{\{|u_{k}|>1\}} \ln (u_{k}^2) 
|u_{k}|^{\alpha_k}u_{k}\varphi\ dx
=\int_{\{|u_{0}|>1\}} \ln (u_{0}^2)u_{0}\varphi\ dx,
\end{align*}
which, together with~\eqref{bd0}, yields the desired result.
\end{proof}

\begin{lemma}\label{mp:lem:ln}
It holds that
\begin{align}\label{mountain:pass:eq:lem}
 \inf_{\cN_0} J_0 = \inf_{\sigma\in \mathcal T}\max_{t\in[0,1]}J_0(\sigma(t)),
 \end{align}
 where $\mathcal T:=\{\sigma\in C^0([0,1],\mathbb H(\Omega)): \sigma(0)=0, \sigma(1)\neq 0, J_0(\sigma(1)\leq 0)\}$.
\end{lemma}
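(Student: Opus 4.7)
The plan is to prove the two inequalities separately, exploiting the ``projection'' onto $\cN_0$ provided by Lemma \ref{lem:A1A2}. For the inequality $\inf_{\sigma\in\mathcal T}\max_{t\in[0,1]} J_0(\sigma(t)) \leq \inf_{\cN_0} J_0$, I would fix $u \in \cN_0$ and construct an explicit ray path through it. By Lemma \ref{lem:A1A2}, $t^0_u = 1$ and $s \mapsto J_0(su)$ is strictly increasing on $(0,1)$ and strictly decreasing on $(1,\infty)$; expanding $J_0(su)$ in $s$ shows that the dominant term at infinity is $-\tfrac{\mu s^2}{2}|u|_2^2 \ln s$, so $J_0(su) \to -\infty$ as $s\to\infty$. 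Choose $T > 1$ with $J_0(Tu) \leq 0$ and set $\sigma(t) := tTu$. Then $\sigma \in \mathcal T$ and $\max_{t \in [0,1]} J_0(\sigma(t)) = J_0(u)$, and the claim follows by taking the infimum over $u\in\cN_0$.

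For the reverse inequality I would show that every $\sigma \in \mathcal T$ meets $\cN_0$. Define the map $h : \mH(\Omega)\setminus\{0\} \to \R$ by
\[
h(w) := \ln t^0_w = \frac{\cE_L(w,w) - \mu \int_\Omega w^2 \ln|w|\, dx}{\mu |w|_2^2},
\]
which is continuous by Lemma \ref{I:C1} together with the continuity of $\cE_L$ and $|\cdot|_2$. By Lemma \ref{lem:A1A2}, $w \in \cN_0 \iff h(w) = 0$. Let $t^* := \sup\{t \in [0,1] : \sigma(t) = 0\}$; by continuity, $\sigma(t^*) = 0$ and $t^* < 1$, and $\sigma$ does not vanish on $(t^*,1]$. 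At $t = 1$, the hypothesis $J_0(\sigma(1)) \leq 0$ combined with the sign structure of $s \mapsto J_0(s\sigma(1))$ (zero at $0$, positive on $(0,t^0_{\sigma(1)})$, strictly decreasing beyond, tending to $-\infty$) forces $h(\sigma(1)) = \ln t^0_{\sigma(1)} < 0$.

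The key (and hardest) step is to show $h(\sigma(t)) \to +\infty$ as $t \to (t^*)^+$. Writing $w = \sigma(t)$ and $v := w/|w|_2$ so that $|v|_2 = 1$, a direct substitution in the definition of $h$ yields
\[
h(w) = \frac{\cE_L(v,v)}{\mu} - \ln |w|_2 - \frac{1}{2}\int_\Omega v^2 \ln v^2\, dx.
\]
Applying the logarithmic Sobolev inequality \eqref{eq:ineq_becker} to $v$ (noting $\ln|v|_2^2=0$) and using $\cE_L(v,v) \geq \lambda_1^L$ from \eqref{thm:eigen_ll}, one obtains
\[
h(w) \geq \left(\frac{1}{\mu} - \frac{N}{4}\right)\lambda_1^L - \ln|w|_2 - \frac{N a_N}{4}.
\]
The coefficient of $\lambda_1^L$ is strictly positive because $\mu < 4/N$, so this lower bound is a finite constant plus $-\ln|w|_2$. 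Since $|\sigma(t)|_2 \to 0$ as $t \to (t^*)^+$ (by \eqref{comp:em}), the right-hand side diverges to $+\infty$. By the intermediate value theorem, there is $t' \in (t^*,1)$ with $h(\sigma(t')) = 0$, hence $\sigma(t') \in \cN_0$ and $\max_t J_0(\sigma(t)) \geq J_0(\sigma(t')) \geq \inf_{\cN_0} J_0$.

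The main obstacle is precisely the blow-up estimate $h(\sigma(t))\to+\infty$ near zero: this is the point where the subcritical hypothesis $\mu < 4/N$ and the sharp logarithmic Sobolev inequality of Proposition \ref{log:prop} become essential. Without them the Nehari scaling $t^0_{\sigma(t)}$ need not blow up near $t^*$, and a path in $\mathcal T$ could in principle avoid $\cN_0$, breaking the mountain-pass characterization.
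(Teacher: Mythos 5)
Your proof is correct and follows essentially the same route as the paper: the forward inequality via the ray $t\mapsto tTu$ through a Nehari point, and the reverse inequality by showing every $\sigma\in\mathcal T$ must cross $\cN_0$, using the logarithmic Sobolev inequality \eqref{eq:ineq_becker} together with $\mu<4/N$ to control the scaling factor near the origin. Your function $h(w)=\ln t^0_w$ is precisely $-\ln\kappa(w)$ for the function $\kappa$ the paper introduces; the paper's version (extending $\kappa$ continuously by $\kappa(0)=0$ and applying the intermediate value theorem on all of $[0,1]$) is slightly cleaner than tracking the last zero $t^*$ of $\sigma$, but the substance is identical.
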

\begin{proof}
Let $v\in\cN_0$, then
\begin{align*}
J_0(tv)&=t^2\cE_L(v,v)-t^2\frac{\mu}{4}\int_\Omega v^2(\ln|(tv)^2|-1)\\
&=t^2\left(\cE_L(v,v)+\frac{\mu}{4}(1-2\ln|t|)|v|_2^2-\frac{\mu}{4}\int_\Omega v^2\ln(v^2)\right).
\end{align*}
Therefore, there is $r_v>0$ such that $J_0(r_vv)<0$, and setting $\sigma_v(t):=tr_vv$ we have that $\sigma_v\in \mathcal T$.  Note that $\max_{t\in[0,1]}J_0(\sigma_v(t))=J_0(v)$ (see Lemma~\ref{lem:A1A2}) and 
\begin{align}\label{mountain1}
\inf_{\sigma\in\mathcal T}\max_{t\in[0,1]}J_0(\sigma(t))\leq \inf_{v\in \cN_0}\max_{t\in[0,1]}J_0(\sigma_v(t))
=\inf_{v\in \cN_0}J_0(v).
\end{align}
On the other hand, let $\kappa:\mathbb H(\Omega)\to \R$ be given by 
\begin{align*}
    \kappa(v):=
    \begin{cases}
    \exp\left(\frac{\mu\int_\Omega v^2\ln|v|\, dx-\cE_L(v,v)}{|v|_2^2}\right), &\text{ if }v\neq0,\\
    0, &\text{ if }v=0.
    \end{cases}
\end{align*}
By Proposition~\ref{log:prop},
\begin{align*}
\frac{\mu\int_\Omega v^2\ln|v|\, dx-\cE_L(v,v)}{|v|_2^2}
\leq 
\mu\ln(|v|_2)+\frac{N\mu}{4} a_N,
\end{align*}
and therefore $\kappa$ is continuous at $v=0$.  Note that $\kappa(v)=1$ if and only if $v\in\cN_0$. Furthermore, if $v\neq 0$ and $J_0(v)\leq 0$, then $\kappa(v)>1$. But then, for every $\sigma\in\mathcal T$, $\kappa(\sigma(0))=0$, $\kappa(\sigma(1))>1$, and then there is $t_0\in(0,1)$ such that $\kappa(\sigma(t_0))=1$, which implies that $\sigma(t_0)\in\cN_0$. This yields that $\max_{t\in[0,1]}J_0(\sigma(t))\geq J_0(\sigma(t_0))\geq \inf_{\cN_0}J_0$; but then $\inf_{\sigma\in\mathcal T}\max_{t\in[0,1]}J_0(\sigma(t))\geq \inf_{\cN_0}J_0.$ This, together with~\eqref{mountain1}, implies~\eqref{mountain:pass:eq:lem} and ends the proof.
\end{proof}

We are ready to show Theorem~\ref{main:thm:2}.

\begin{proof}[Proof of Theorem~\ref{main:thm:2}]
Let $\Psi:\mH(\Omega)\backslash\{0\}\to\R$ be given by $\Psi(u):=\cE_L(u,u)-\frac{\mu}{2} \int_\Omega u^2 \ln(u^2) \ dx$. Then $\mathcal N_0=\Psi^{-1}(0)$ and 
$
\Psi'(u)u=2\cE_L(u,u)-\mu \int_\Omega (\ln(u^2)+1)u^2 \ dx=-\mu|u|_2^2<0$ if $u\in\cN_0$. In particular, $\cN_0$ is a $C^1$-manifold.  By Ekeland's variational principle \cite[Corollary 3.4]{E74}, there are $(u_n)_{n\in\N}\subset \cN_0$ and $(\zeta_n)_{n\in\N}\subset \R$ such that
\begin{align}
0\leq J_0(u_n)-\inf_{\cN_0} J_0\leq \frac{1}{n^2}
\quad \text{
and }\quad
\left\|
J_0'(u_n)-\zeta_n \Psi'(u_n)
\right\|_{\cL(\mH(\Omega),\R)}\leq \frac{1}{n}.
 \label{eke1}
\end{align}
In particular, $J_0(u_n)<\infty$ for all $n\in\N$ and
\begin{align}
o(1)&=\frac{1}{\|u_n\|}\left(J_0'(u_n)u_n-\zeta_n \Psi'(u_n)u_n\right)\notag\\
&=\frac{1}{\|u_n\|}\left(
\cE_L(u_n,u_n)-\frac{\mu}{2}\int_\Omega u_n^2 \ln(u_n^2)
+\zeta_n \mu|u_n|_2^2\right)
=\zeta_n \mu\frac{|u_n|_2^2}{\|u_n\|}\label{star}
\end{align}
as $n\to\infty$.  By Proposition~\ref{prop:bd}, there is $C_2>0$ satisfying that
\begin{align}\label{unbd}
\|u_n\|<C_2\qquad \text{ for all }n\in\N
\end{align}
and there is $u_0\in \mathbb H(\Omega)\backslash\{0\}$ such that, passing to a subsequence, $u_n\weakto{u_0}$ weakly in $\mathbb H(\Omega)$ and $u_n\to{u_0}$ strongly in $L^2(\Omega)$ as $n\to\infty$.  By \Cref{lem:bound_below_nehari}, there is $c_0>0$ such that $\frac{|u_n|_2^2}{\|u_n\|}>\frac{c_0}{C_2}$ for all $n\in\N$. Therefore~\eqref{star} implies that $\zeta_n\to 0$ as $n\to\infty.$ Moreover, by~\eqref{unbd}, there is $C_3>0$ such that 
\begin{align*}
|\Psi'(u_n)v| = \left|2\cE_L(u_n,v)-\mu \int_\Omega (\ln(u_n^2)+1)u_nv \ dx\right|<C_3
\end{align*}
 for all $v\in \mH(\Omega)$ with $\|v\|=1$, where we used that $I'(u_n)$ is a bounded linear operator, by Lemma~\ref{I:C1}.  As a consequence, by~\eqref{eke1},
\begin{align}\label{Jpz}
\left\|
J_0'(u_n)
\right\|_{\cL(\mH(\Omega),\R)}\leq \frac{1}{n}
+\zeta_n \|\Psi'(u_n)\|_{\cL(\mH(\Omega),\R)}\to 0\quad \text{ as }n\to\infty.
\end{align}
Then, by Lemma~\ref{lem:L2conv},
\begin{align*}
0
=
\lim_{n\to\infty}J_0'(u_n)\varphi=
\lim_{n\to\infty}\cE_L(u_n,\varphi)-\frac{\mu}{2}\int_\Omega \ln(u_n^2)u_n \varphi\ dx
=\cE_L(u_0,\varphi)-\frac{\mu}{2}\int_\Omega \ln(u_0^2)u_0\varphi\ dx
\end{align*}
for all $\varphi\in C^\infty_c(\Omega)$. This implies that $u_0$ is a weak solution of~\eqref{lambda:problem}. Moreover, since $C^\infty_c(\Omega)$ is dense in $\mH(\Omega)$, there is $(\varphi_n)_{n\in\N}\subset C^\infty_c(\Omega)$ such that $\varphi_n\to u_0$ in $\mH(\Omega)$ as $n\to\infty$. Then, by Lemma~\ref{I:C1}, we have that 
\begin{align*}
0=\lim_{n\to \infty}\cE_L(u_0,\varphi_n)-\frac{\mu}{2}\int_\Omega \ln(u_0^2)u_0\varphi_n\ dx
=\cE_L(u_0,u_0)-\frac{\mu}{2}\int_\Omega \ln(u_0^2)u_0^2\ dx.
\end{align*}
 Therefore, $u_0\in\cN_0$. Finally, using that $u_n,u_0\in\cN_0$ and that $u_n\to u_0$ in $L^2(\Omega),$
\begin{align*}
\inf_{\cN_0}J_0&=\lim_{n\to\infty}J_0(u_n)
=\lim_{n\to\infty}\cE_L(u_n,u_n)-\frac{\mu}{4}\int_\Omega u_n^2(\ln(u_n^2)-1)\ dx\\
&=\lim_{n\to\infty}\frac{\mu}{4}\int_\Omega u_n^2\ dx=
\frac{\mu}{4}\int_\Omega u_0^2\ dx=J_0(u_0).
\end{align*}
Note that~\eqref{mountain:pass} follows from Lemma \ref{mp:lem:ln}.

Finally, let $u_0$ be any least-energy solution and we argue that $u_0$ does not change sign. By \cite[Lemma 3.3]{CW19}, we have that $|u_0|\in\mathbb H(\Omega)$ and 
\begin{align}\label{prop:comp_EL}
    \cE_{L}(|u_0|,|u_0|)\leq \cE_{L}(u_0,u_0).
\end{align}
Furthermore, the equality holds if and only if $u_0$ does not change sign. Let $t^0_{|u_0|}$ be given by~\eqref{t0:def} with $w=|u_0|$. Then $t_{|u_0|}^0|u_0|\in\cN_0$ and, by~\eqref{prop:comp_EL} and because $u_0\in\cN_0$, we have that $t_{|u_0|}^0\leq 1$. Therefore
\begin{align*}
J_0(u_0)=\inf_{\cN_0}J_0\leq J_0(t_{|u_0|}^0|u_0|)=\frac{\mu}{4}(t_{|u_0|}^0)^2|u_0|_2^2\leq \frac{\mu}{4}|u_0|_2^2=J_0(u_0).
\end{align*}
This yields that $t_{|u_0|}^0=1$ and therefore~\eqref{prop:comp_EL} must hold with equality.  This implies that $u_0$ does not change sign.

\end{proof}

\section{Convergence of solutions}\label{6:sec}

In this section we show that least-energy solutions of~\eqref{subcritical:intro} converge in the $L^2$-sense, up to a subsequence, to a least-energy solution of~\eqref{log:prob:intro}.  First, a standard use of the Nehari method yields the existence of least-energy solutions of~\eqref{subcritical:intro}.  For completeness (and for comparison with Theorem~\ref{main:thm:2}) we include a short proof. 

\begin{theorem}\label{existence}
Let $s\in(0,\tfrac{1}{4})$ and $p_s\in(2,2^*_s)$. There is a least-energy solution $u_s\in\cN_s$ of~\eqref{subcritical:intro}, namely, $u_s$ satisfies~\eqref{ws:intro} and $J_s(u_s)=\inf_{\cN_s}J_s.$ Furthermore, all least-energy solutions of~\eqref{subcritical:intro} are either positive or negative in $\Omega$.
\end{theorem}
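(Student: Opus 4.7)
The plan is to run the standard Nehari manifold scheme in the space $\cH^s_0(\Omega)$, exploiting the compactness afforded by the strict subcriticality $p_s<2^*_s$. First, I would observe that on $\cN_s$ the functional takes the simple form $J_s(u)=\left(\tfrac{1}{2}-\tfrac{1}{p_s}\right)\|u\|_s^2=\tfrac{p_s-2}{2p_s}\|u\|_s^2$, so that minimizing $J_s$ on $\cN_s$ is equivalent to minimizing $\|\cdot\|_s^2$. The argument in the proof of Lemma~\ref{lem:Ebd} (using the fractional Sobolev inequality, Theorem~\ref{thm:sobolev}) applied for the fixed value $s$ gives a uniform lower bound $\|u\|_s\geq c_s>0$ on $\cN_s$, whence $\inf_{\cN_s}J_s>0$. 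For any $\varphi\in C_c^\infty(\Omega)\setminus\{0\}$, the projection $t_\varphi^s\varphi\in\cN_s$ from Lemma~\ref{lem:t0} shows $\cN_s\neq\emptyset$.

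Next, I would take a minimizing sequence $(u_n)_{n\in\N}\subset \cN_s$. From $J_s(u_n)=\tfrac{p_s-2}{2p_s}\|u_n\|_s^2$ and $\sup_n J_s(u_n)<\infty$, the sequence is bounded in $\cH^s_0(\Omega)$. Passing to a subsequence, $u_n\weakto u_s$ weakly in $\cH^s_0(\Omega)$, and by the compact embedding $\cH^s_0(\Omega)\hookrightarrow L^{p_s}(\Omega)$ (which holds because $p_s<2^*_s$ and $\Omega$ is bounded Lipschitz) one has $u_n\to u_s$ strongly in $L^{p_s}(\Omega)$. The Nehari identity $\|u_n\|_s^2=|u_n|_{p_s}^{p_s}$ combined with $\|u_n\|_s\geq c_s$ yields $|u_n|_{p_s}\geq c_s^{2/p_s}>0$, so $u_s\neq 0$. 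Projecting the weak limit, set $t:=t^s_{u_s}=\bigl(\|u_s\|_s^2/|u_s|_{p_s}^{p_s}\bigr)^{1/(p_s-2)}$, so that $tu_s\in\cN_s$. By weak lower semicontinuity of $\|\cdot\|_s$ and strong $L^{p_s}$-convergence, $\|u_s\|_s^2\leq\liminf_n\|u_n\|_s^2=\liminf_n |u_n|_{p_s}^{p_s}=|u_s|_{p_s}^{p_s}$, hence $t\leq 1$. Using that $s\mapsto J_s(sv)$ is maximized on $[0,\infty)$ at the projection coefficient, one gets
\begin{equation*}
\inf_{\cN_s}J_s\leq J_s(tu_s)=\tfrac{p_s-2}{2p_s}t^2\|u_s\|_s^2\leq \tfrac{p_s-2}{2p_s}\liminf_n\|u_n\|_s^2=\inf_{\cN_s}J_s,
\end{equation*}
forcing $t=1$, so $u_s\in\cN_s$ and $J_s(u_s)=\inf_{\cN_s}J_s$. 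A standard Lagrange multiplier computation on the $C^1$-constraint $\cN_s=\Psi_s^{-1}(0)$ with $\Psi_s(u):=\|u\|_s^2-|u|_{p_s}^{p_s}$ then promotes $u_s$ to a weak solution of~\eqref{subcritical:intro}, since $\Psi_s'(u_s)u_s=(2-p_s)\|u_s\|_s^2\neq 0$.

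For sign-definiteness, I would use the classical pointwise inequality $||u(x)|-|u(y)||\leq |u(x)-u(y)|$, which yields $\||u|\|_s\leq \|u\|_s$ with equality if and only if $u$ does not change sign in $\Omega$. Given a least-energy solution $u_s$, set $w:=|u_s|\in\cH^s_0(\Omega)$; since $|w|_{p_s}=|u_s|_{p_s}$ and $\|w\|_s\leq \|u_s\|_s=|u_s|_{p_s}^{p_s/2}$, the projection parameter $t^s_w\leq 1$, and therefore
\begin{equation*}
\inf_{\cN_s}J_s\leq J_s(t^s_w w)=\tfrac{p_s-2}{2p_s}(t^s_w)^2\|w\|_s^2\leq \tfrac{p_s-2}{2p_s}\|u_s\|_s^2=J_s(u_s)=\inf_{\cN_s}J_s.
\end{equation*}
Hence equality holds throughout, forcing $\||u_s|\|_s=\|u_s\|_s$, so $u_s$ has constant sign in $\Omega$. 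Strict positivity (respectively, negativity) in $\Omega$ then follows from the strong maximum principle for $(-\Delta)^s$ applied to the nonnegative (respectively, nonpositive) solution. The argument is essentially routine; the only place requiring minor care is verifying that the compactness of the embedding into $L^{p_s}$ really does survive for Lipschitz $\Omega$ and all $s\in(0,\tfrac{1}{4})$, but this is classical.
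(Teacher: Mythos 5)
Your proof is correct, but it takes a genuinely different route from the paper. You carry out a direct minimization over $\cN_s$: take a minimizing sequence, pass to the weak limit, use the compact embedding $\cH^s_0(\Omega)\hookrightarrow L^{p_s}(\Omega)$ and weak lower semicontinuity of $\|\cdot\|_s$, project the weak limit back onto $\cN_s$ via $t^s_{u_s}$, and squeeze to force $t^s_{u_s}=1$; you then recover $J_s'(u_s)=0$ by a Lagrange multiplier computation a posteriori. The paper instead applies Ekeland's variational principle directly to $J_s|_{\cN_s}$ to produce a constrained Palais--Smale sequence $(u_n,\zeta_n)$, shows $\zeta_n\to 0$ and hence $\|J_s'(u_n)\|\to 0$, and only then passes to the weak limit and identifies it as a critical point of the free functional. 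Both schemes are standard and equally valid here; the reason the paper opts for Ekeland is presumably to have a uniform template with the proof of Theorem~\ref{main:thm:2}, where the logarithmic nonlinearity destroys the simple algebraic structure of $J_0|_{\cN_0}$ (the functional is no longer a fixed power of the norm), the compact embedding available is only into $L^2$, and the projection-and-squeeze step is much harder to set up; there the Ekeland route is genuinely the more economical one. Your approach, by contrast, is slightly more elementary in the subcritical fractional setting and avoids invoking Ekeland's principle. The sign-definiteness argument via $\||u_s|\|_s\leq\|u_s\|_s$, the projection coefficient $t^s_{|u_s|}\leq 1$, and the strong maximum principle is identical to the paper's.
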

\begin{proof}
Let $\Psi:\cH^s_0(\Omega)\backslash\{0\}\to\R$ be given by $\Psi(u):=\|u\|_s^2-|u|_{p_s}^{p_s}$. Then $\mathcal N_s=\Psi^{-1}(0)$ and 
$
\Psi'(u)u=2\|u_s\|^2_s-p_s|u_s|_{p_s}^{p_s}=(2-p_s)\|u_s\|_s^2<0$ if $u\in\cN_s$. By Ekeland's variational principle \cite[Corollary 3.4]{E74} and Lemma~\ref{lem:Ebd}, there are $(u_n)_{n\in\N}\subset \cN_s$, $(\zeta_n)_{n\in\N}\subset \R$, and $C>1$ such that
\begin{align}
C^{-1}\leq \|u_n\|_s\leq C,\qquad
0\leq J_s(u_n)-\inf_{\cN_s} J_s\leq \frac{1}{n^2},
\qquad
\left\|
J_s'(u_n)-\zeta_n \Psi'(u_n)
\right\|_{\cL(\cH_0^s(\Omega),\R)}\leq \frac{1}{n}
 \label{eke1:s}
\end{align}
for all $n\in\N$. It follows that, $o(1)=\frac{1}{\|u_n\|_s}\left(J_s'(u_n)u_n-\zeta_n \Psi'(u_n)u_n\right)
=\zeta_n (2-p_s)\|u_n\|_s$ as $n\to\infty$ and therefore $\zeta_n\to 0$ as $n\to\infty.$ Then $\left\|J_s'(u_n)\right\|_{\cL(\cH^s_0(\Omega),\R)}\to 0$ as $n\to\infty$ and there is $u_s\in \cH_0^s(\Omega)\backslash\{0\}$ such that, passing to a subsequence, $u_n\weakto{u_s}$ weakly in $\cH^s_0(\Omega)$ and $u_n\to{u_s}$ strongly in $L^{p_s}(\Omega)$ as $n\to\infty$. With these facts, we conclude that $u_s$ is a weak solution of~\eqref{subcritical:intro}, $u_s\in\cN_s$, and, up to a subsequence,
\begin{align*}
\inf_{\cN_s}J_s&=\lim_{n\to\infty}J_s(u_n)
=\left(\frac{1}{2}-\frac{1}{p_s}\right)\lim_{n\to\infty}\|u_n\|_s^2\geq \left(\frac{1}{2}-\frac{1}{p_s}\right)\|u_s\|_s^2= J_s(u_s)\geq \inf_{\cN_s}J_s.
\end{align*}
Finally, let $u_s$ be a least-energy solution and let $t_{|u_s|}^s$ be given by~\eqref{ts:def}. Then $\||u_s|\|_s\leq \|u_s\|_s$, $t_{|u_s|}^s\leq 1$, and $t_{|u_s|}^s|u_s|\in\cN_s$. Therefore,
\begin{align*}
J_s(u_s)\leq J_s(t_{|u_s|}^s|u_s|)=\left(\frac{1}{2}-\frac{1}{p_s}\right)(t_{|u_s|}^s)^{p_s}|u_s|_{p_s}^{p_s}\leq J_s(u_s),
\end{align*}
which yields that $t_{|u_s|}^s=1$ and $|u_s|$ is a nonnegative least-energy solution of~\eqref{subcritical:intro}.  By the strong maximum principle (see \emph{e.g.} \cite[Proposition 3.3]{JF}) it follows that $|u_s|>0$ in $\Omega$ and therefore $u_s$ is either strictly positive or negative in $\Omega$.
\end{proof}

Next we recall some known properties. Let $u\in\mathbb H(\Omega)$ and $\varphi\in C^\infty_c(\Omega)$, then
\begin{align}\label{ibyp}
    \cE_L(u,\varphi)=\int_\Omega u L_\Delta \varphi\ dx,
\end{align}
see \cite[eq. (3.11)]{CW19}. Moreover, by \cite[Theorem 1.1]{CW19},  $L_\Delta \varphi\in L^p(\R^N)$ and
\begin{align}\label{exp:thm}
    \lim_{s\to 0^+}\left|\frac{(-\Delta)^s \varphi- \varphi}{s}-L_\Delta \varphi\right|_p=0\qquad \text{ for all }0<p\leq \infty.
\end{align}
In particular, $(-\Delta)^s \varphi=\varphi+s L_\Delta \varphi+o(s)$ in $L^\infty(\R^N)$ as $s\to 0^+$.

\medskip

We are ready to show our main theorem.

\begin{proof}[Proof of Theorem~\ref{main:thm}]
Let $s_k\in(0,\frac{1}{4})$ be such that $s_k\to 0$ and let $(u_{s_k})_{k\in \N}$ be a sequence of least-energy solutions (the set of least-energy solutions is nonempty by Theorem~\ref{existence}). By~\eqref{C0} there is $C_0=C_0(\Omega,p)>0$ such that
\begin{align}\label{C0thm}
\|u_{s_k}\|_{s_k}<C_0\qquad \text{ for all }k\in\N.
\end{align}
Moreover, by Proposition~\ref{cor:Ebd}, the sequence $(u_{s_k})_{k\in \N}\subset \cN_{s_k}$ is uniformly bounded in the Hilbert space $\mathbb H(\Omega)$.  By the compact embedding of $\mathbb H(\Omega)$ into $L^2(\Omega)$, we obtain that, passing to a subsequence, there is $u_0\in \mathbb H(\Omega)$ such that 
\begin{align}\label{conv}
    u_{s_k}\rightharpoonup u_0\quad \text{ in }\mathbb H(\Omega),\qquad
    u_{s_k}\to u_0\quad \text{ in }L^2(\Omega)\text{ as }k\to\infty.
\end{align}

Observe that, if $f(s):=|t|^{p_s-2}t$, then 
$f'(s\tau)=p'(s\tau)\ln(|t|)|t|^{p(s\tau)-2}t$. Let $\varphi\in C^\infty_c(\Omega)$, then, by~\eqref{exp:thm},
\begin{align}
\int_\Omega u_{s_k} (\varphi+{s_k} L_\Delta \varphi + o({s_k}))
&=\int_\Omega u_{s_k} (-\Delta)^{s_k}\varphi= \int_\Omega |u_{s_k}|^{p_{s_k}-2}u_{s_k}\varphi\notag\\
&= \int_\Omega \left(u_{s_k}+{s_k}\int_0^1
p'(s_k\tau)\ln(|u_{s_k}|)|u_{s_k}|^{p(s_k\tau)-2}u_{s_k}
\ d\tau\right)\varphi\ dx,\label{a:eq}
\end{align}
in $L^\infty(\Omega)$ as $k\to \infty$.
Moreover, since $\int_\Omega u_{s_k} L_\Delta \varphi = \cE_L (u_{s_k},\varphi)$, by~\eqref{ibyp}, then~\eqref{a:eq} implies that
\begin{align}
\cE_L(u_{s_k},\varphi)+o(1)
&=\int_\Omega u_{s_k} L_\Delta \varphi\ dx + o(1)\notag\\
&=\int_\Omega\int_0^1
p'(s_k\tau)\ln(|u_{s_k}|)|u_{s_k}|^{p(s_k\tau)-2}u_{s_k}
\ d\tau\varphi\ dx,\label{rhs}
\end{align}
as $k\to \infty$ for all $\varphi\in C^\infty_c(\Omega)$. By Lemma~\ref{lem:L2conv}, passing to a subsequence,
\begin{align*}
\lim_{k\to\infty}\int_0^1 p'(s_k\tau) \int_{\Omega} \ln (|u_{s_k}|) 
|u_{s_k}|^{p(s_k\tau)-2}u_{s_k}
\varphi\ dx\ d\tau
=p'(0)\int_{\Omega} \ln (|u_{0}|)u_{0}\varphi\ dx.
\end{align*}
Therefore, letting $k\to \infty$ in~\eqref{rhs} we conclude that
\begin{align}\label{ws}
\cE_L(u_0,\varphi)
=p'(0) \int_\Omega \ln (|u_0|)u_0\varphi\qquad \text{ for all }\varphi\in C^\infty_c(\Omega).
\end{align}
Then, by density, $u_0$ is a weak solution of 
\begin{align*}
 L_\Delta u_0 = p'(0) \ln (|u_0|)u_0\quad \text{ in }\Omega.
\end{align*}

Let
\begin{align*}
    \lambda_k=\frac{p(s_k)-2}{2^*_s-2}\in(0,1),\quad
\alpha_k=(1-\lambda_k)2,\quad
\beta_k=\lambda_k 2^*_{s_k},\quad
r_k=\frac{1}{(1-\lambda_k)},\quad
q_k=\frac{1}{\lambda_k},
\end{align*}
 and note that $\alpha_k+\beta_k=p(s_k)$, $\frac{1}{r_k}+\frac{1}{q_k}=1,$ and 
 \begin{align*}
  \lim_{k\to\infty} \lambda_k 
  = \frac{s_k\int_0^1 p'(s_k\tau)\ d\tau}{s_k \frac{4}{N-2s_k}}
  = p'(0)\frac{N}{4}\in(0,1),
 \end{align*}
 by~\eqref{p:hyp}. 
 Then, by~\eqref{bds}, Theorem~\ref{thm:sobolev},~\eqref{C0thm}, and Hölder's inequality, there is $c=c(\Omega,p)>0$ and $C=C(\Omega,p)>0$ such that
\begin{align*}
c&<
\|u_{s_k}\|_{s_k}^2
=\int_\Omega|u_{s_k}|^{p_{s_k}}
=\int_\Omega|u_{s_k}|^{\alpha_k}|u_s|^{\beta_k}\\
&\leq \left(\int_\Omega|u_{s_k}|^{\alpha_k r_k}\right)^\frac{1}{r_k} \left(\int_\Omega|u_{s_k}|^{\beta_k q_k}\right)^\frac{1}{q_k}=|u_{s_k}|_2^{2(1-\lambda_k)} |u_{s_k}|_{2^*_{s_k}}^{2^*_{s_k}\lambda_k}
\leq C|u_{s_k}|_2^{2(1-\lambda_k)},
\end{align*}
for all $k\in\N$, and therefore 
\begin{align*}
|u_0|_2=\lim_{k\to \infty}|u_{s_k}|_{2}\geq \left(\frac{c}{C}\right)^{\frac{1}{2(1-\frac{N}{4}p'(0))}}>0,
\end{align*}
This yields that $u_0\neq 0$ is a nontrivial weak solution and $u_0\in\cN_0$. 

Next, we show that $u_0$ is a least-energy solution of the limiting problem, namely, that
\begin{align*}
\frac{p'(0)}{4}|u_0|_2^2=J_{0}(u_0)=\inf_{\cN_0}J_{0}=:c_0>0,
\end{align*}
where $J_0(u)=\frac{1}{2}\cE_L(u,u)-\frac{p'(0)}{4}\int_\Omega |u|^2(\ln(|u|^2)-1)\ dx$.  Noting that $\lim_{k\to\infty}\frac{1}{s_k}\left(\frac{1}{2}-\frac{1}{p_{s_k}}\right)=\frac{p'(0)}{4}$, we have that
\begin{align*}
J_{s_k}(u_{s_k})=\left(\frac{1}{2}-\frac{1}{p_{s_k}}\right)\|u_{s_k}\|_{s_k}^2\quad \text{ and }\quad 
\lim_{k\to\infty}\frac{1}{s_k}J_{s_k}(u_{s_k})=\frac{p'(0)}{4}\lim_{k\to\infty}\|u_{s_k}\|_{s_k}^2.
\end{align*}
Let $c_k:=\frac{1}{s_k}\left(\frac{1}{2}-\frac{1}{p_{s_k}}\right)\|u_{s_k}\|_{s_k}^2$. 
 Then, by~\eqref{C0thm}, there is $c^*\in\R$ such that, passing to a subsequence, $\lim_{k\to\infty}c_k=c^*$.  We claim that $c^*=c_0$.

By Fatou's Lemma,
\begin{align}\label{3}
    c_0=\frac{p'(0)}{4}|u_0|_2^2 \leq \frac{p'(0)}{4}\liminf_{k\to\infty}\int_{\R^N}|\xi|^{2s_k}|\widehat u_{s_k}|^2\ d\xi 
    =\liminf_{k\to\infty}c_k=c^*.
\end{align}

On the other hand, by Theorem~\ref{main:thm:2} with $\mu=p'(0)$, there is $v\in \cN_0$ such that $J_0(v)=c_0$. Let $(v_n)_{n\in \N}\subset C^\infty_c(\Omega)\cap \cN_0$ such that $v_n\to v$ as $n\to\infty$ in $\mH(\Omega)$. By Lemma~\ref{lem:t0} and using that $v_n\in\cN_0$,
\begin{equation}\label{1}
\lim_{k\to\infty}t^n_k= 1\quad \text{ for every }n\in\N,\text{ where }t^n_k:=\left(\frac{\|v_n\|_{s_k}^2}{|v_n|_{p_{s_k}}^{p_{s_k}}}\right)^{\frac{1}{p_{s_k}-2}}.
\end{equation}
But then, using the minimality of $u_{s_k}$,~\eqref{1}, Lemma~\ref{Es:exp:lem}, and that $t^n_k v_n\in\cN_s$,
\begin{align*}
c^*=\lim_{k\to\infty}c_{k}
=\lim_{k\to\infty}\frac{1}{s_k}J_{s_k}(u_{s_k})
\leq \lim_{k\to\infty}\frac{1}{s_k}J_{s_k}(t^n_{k}v_n)=\lim_{k\to\infty}\frac{1}{s_k}\left(\frac{1}{2}-\frac{1}{p_{s_k}}\right)\|t_{k}^nv_n\|_{s_k}^2
=\frac{p'(0)}{4}|v_n|_2^2.
\end{align*}
Since $\lim_{n\to\infty}\frac{p'(0)}{4}|v_n|_2^2=\frac{p'(0)}{4}|v|_2^2=J_0(v)=c_0$, we have that $c^*\leq c_0$. Together with~\eqref{3}, we conclude that $c^*=c_0$.  Finally, arguing as in~\eqref{3} and using~\eqref{conv},
\begin{align*}
c_0\leq J_0(u_0)=\frac{p'(0)}{4}|u_0|_2^2 \leq \frac{p'(0)}{4}\liminf_{k\to\infty}\|u_{s_k}\|_{s_k}^2
 =\lim_{k\to\infty}c_k=c^*=c_0,
\end{align*}
which implies that $J_0(u_0)=c_0$.  
\end{proof}

\section{Closing remarks}\label{c:rmks}

To finish this paper, we comment on the following.

\begin{remark}\label{open:rmk}(On the extremal cases for $p'(0)$) The cases $p'(0)=0$ and $p'(0)=\frac{4}{N}$ are not covered by Theorem~\ref{main:thm} (note that the assumption $2<p(s)<2^*_s$ for $s\in(0,\frac{1}{4})$ implies that $p'(0)\in[0,\frac{4}{N}]$).

For $p'(0)=0$, the  characterization of the limiting problem (see the proof of Theorem \ref{main:thm}) requires a second order \textemdash or even higher, if $p''(0)=0$\textemdash expansion of the fractional Laplacian and of the power nonlinearity at $s=0$, and we do not pursue this here. 

On the other hand, if $p'(0)=\frac{4}{N}$ (which corresponds to the ``critical case"), then we cannot use the logarithmic Sobolev inequality to obtain estimates in the $\mH(\Omega)$-norm (and gain compactness), see for example Lemma~\ref{lem:bound_below_nehari} and Proposition~\ref{prop:bd}, where the fact that $\lambda:=p'(0)\frac{N}{4}<1$ is crucial. For problem \eqref{subcritical:intro} with the critical Sobolev exponent $p_s=2^*_s$, the use of a suitable symmetric variational framework can yield the existence of solutions in some bounded (and unbounded) domains, see \emph{e.g.} \cite{HSS21} and the references therein.  We conjecture that a similar approach can be used to study \eqref{lambda:problem} with $\mu=\frac{4}{N}$.
\end{remark}

\begin{remark}(On the positivity properties of solutions) In Theorem \ref{existence}, a strong maximum principle for supersolutions of $(-\Delta)^s$ is used to show that least-energy solutions of \eqref{subcritical:intro} are either strictly positive or strictly negative in $\Omega$; in particular, for $u_s\in\cH^s_0(\Omega)$,
\begin{align*}
 (-\Delta)^s u_s = u_s^{p_s-1}\geq 0\quad \text{ in }\Omega  \quad \text{implies that} \quad u_s>0\quad \text{ in }\Omega. 
\end{align*}
Strong maximum principles for $L_\Delta$ are available (see for example the proof of \cite[Theorem~3.4]{CW19} where a general strong maximum principle for nonlocal operators shown in \cite[Theorem~1.1]{JW} is used to prove that the first Dirichlet eigenfunction of $L_\Delta$ is strictly positive). However, note that the logarithmic nonlinearity $\ln(u_0)u_0$ does not have a fixed sign for a nonnegative $u_0$, namely, for $u\in\mH(\Omega)$,
\begin{align*}
 L_\Delta u_0 = \mu \ln(u_0)u_0\quad\text{ in }\Omega\quad \text{ and }\quad u_0\geq 0\quad \text{ in }\Omega  \quad \text{does not imply that} \quad u_0>0\quad \text{ in }\Omega. 
\end{align*}
As a consequence, in Theorem \ref{main:thm:2}, we only show that the least-energy solutions are either nonnegative or nonpositive.
\end{remark}

\begin{remark}[Generalizations and extensions]\label{ext:rmk} Our main variational tool is the Nehari manifold method.  This is a very flexible and versatile approach that can be applied to study a wide set of nonlinear problems. For instance, in \cite{SW10}, a generalized Nehari method is used to show the existence of a least-energy solution (\emph{a.k.a.} ground state) of the problem
\begin{align}\label{L}
    -\Delta u -\lambda u= f(x,u)\quad \text{ in }\Omega,\qquad u=0\quad \text{ on }\partial \Omega,
\end{align}
where $\lambda\in\R$ and $f\in C(\Omega\times \R,\R)$ satisfies the following assumptions:
\begin{enumerate}
    \item (Subcriticality) $|f(x,u)|\leq a(1+|u|^{q-1})$ for some $a>0$ and $2<q<\frac{2N}{N-2}=2^*_1$, $N\geq 3$.
    \item (Superlinearity) $f(x,u)=o(u)$ uniformly in $x$ as $u\to 0$ and $F(x,u)/u^2\to\infty$ uniformly in $x$ as $|u|\to\infty$, where $F(x,u):=\int_0^u f(x,s)\ ds$.
    \item (Monotonicity) $u\mapsto f(x,u)/|u|$ is strictly increasing on $(-\infty,0)$ and $(0,\infty)$.
\end{enumerate}
    Note that $f$ is not assumed to be $C^1$.  Furthermore, if $f$ is odd in $u$, then \eqref{L} has infinitely many solutions. Combining the approach from \cite{SW10} with the methods in this paper, it is possible to characterize the small order asymptotics of \eqref{L} when $-\Delta$ is substituted with $(-\Delta)^s$. In this case, the condition for subcriticality would be $|f_s(x,u)|\leq a(1+|u|^{q_s-1})$ for some $a>0$ and $2<q_s<\frac{2N}{N-2s}=2^*_s$, $s\in(0,\frac{N}{2}),$ $N\geq 1$. To analyze the small order limit as $s\to 0^+$, suitable (logarithmic-subcriticality) assumptions need to be imposed on the behavior of the map $s\mapsto f_s$ at $s=0$, as in \eqref{p:hyp}.
\end{remark}

\renewcommand{\abstractname}{Acknowledgements}
\begin{abstract}
\end{abstract}
\vspace{-0.5cm}

Víctor Hernández-Santamaría is supported by the program \emph{Estancias posdoctorales por M\'exico} of CONACyT, Mexico.  Alberto Saldaña is supported by UNAM-DGAPA-PAPIIT grant IA101721, Mexico.  The authors thank Pierre Aime Feulefack, Sven Jarohs, and Tobias Weth for helpful discussions and Harbir Antil for sharing some relevant references. We also thank the anonymous referee for helpful comments and suggestions.


\begin{thebibliography}{10}

\bibitem{AB17}
Harbir Antil and S\"{o}ren Bartels.
\newblock Spectral approximation of fractional {PDE}s in image processing and
  phase field modeling.
\newblock {\em Comput. Methods Appl. Math.}, 17(4):661--678, 2017.

\bibitem{ABS21}
Harbir Antil, S{\"o}ren Bartels, and Armin Schikorra.
\newblock Approximation of fractional harmonic maps.
\newblock {\em arXiv preprint arXiv:2104.10049}, 2021.

\bibitem{B95}
William Beckner.
\newblock Pitt's inequality and the uncertainty principle.
\newblock {\em Proc. Amer. Math. Soc.}, 123(6):1897--1905, 1995.

\bibitem{BS20}
Bartosz Bieganowski and Simone Secchi.
\newblock Non-local to local transition for ground states of fractional
  {S}chrodinger equations on {$\mathbb{R}^N$}.
\newblock {\em J. Fixed Point Theory Appl.}, 22(3):Paper No. 76, 15, 2020.

\bibitem{BS19}
Bartosz Bieganowski and Simone Secchi.
\newblock {Non-local to local transition for ground states of fractional
  Schrödinger equations on bounded domains}.
\newblock {\em Topol. Methods Nonlinear Anal.}, 57(2):413--425, 2021.

\bibitem{BV16}
Claudia Bucur and Enrico Valdinoci.
\newblock {\em Nonlocal diffusion and applications}, volume~20 of {\em Lecture
  Notes of the Unione Matematica Italiana}.
\newblock Springer, [Cham]; Unione Matematica Italiana, Bologna, 2016.

\bibitem{CdLNP21}
A.~Cesaroni, L.~De~Luca, M.~Novaga, and M.~Ponsiglione.
\newblock Stability results for nonlocal geometric evolutions and limit cases
  for fractional mean curvature flows.
\newblock {\em Comm. Partial Differential Equations}, 46(7):1344--1371, 2021.

\bibitem{CV20}
Huyuan Chen and Laurent V{\'e}ron.
\newblock Bounds for eigenvalues of the dirichlet problem for the logarithmic
  laplacian.
\newblock {\em arXiv preprint arXiv:2011.05692}, 2020.

\bibitem{CW19}
Huyuan {Chen} and Tobias {Weth}.
\newblock {The Dirichlet problem for the logarithmic Laplacian}.
\newblock {\em {Commun. Partial Differ. Equations}}, 44(11):1100--1139, 2019.

\bibitem{CdP18}
Ernesto Correa and Arturo de~Pablo.
\newblock Nonlocal operators of order near zero.
\newblock {\em J. Math. Anal. Appl.}, 461(1):837--867, 2018.

\bibitem{CT04}
Athanase {Cotsiolis} and Nikolaos~K. {Tavoularis}.
\newblock {Best constants for Sobolev inequalities for higher order fractional
  derivatives}.
\newblock {\em {J. Math. Anal. Appl.}}, 295(1):225--236, 2004.

\bibitem{DCKNP21}
Lucia De~Luca, Vito Crismale, Andrea Kubin, Angelo Ninno, and Marcello
  Ponsiglione.
\newblock The variational approach to $s$-fractional heat flows and the limit
  cases $s\to 0^+$ and $s\to 1^-$.
\newblock {\em arXiv:2107.13828}, 2021.

\bibitem{E74}
Ivar Ekeland.
\newblock On the variational principle.
\newblock {\em J. Math. Anal. Appl.}, 47:324--353, 1974.

\bibitem{JF}
Mouhamed~Moustapha Fall and Sven Jarohs.
\newblock Overdetermined problems with fractional {L}aplacian.
\newblock {\em ESAIM Control Optim. Calc. Var.}, 21(4):924--938, 2015.

\bibitem{FBS20}
Juli\'{a}n Fern\'{a}ndez~Bonder and Ariel Salort.
\newblock Stability of solutions for nonlocal problems.
\newblock {\em Nonlinear Anal.}, 200:112080, 13, 2020.

\bibitem{Feu21}
Pierre~Aime Feulefack.
\newblock The logarithmic {S}chr\"odinger operator and associated Dirichlet
  problems.
\newblock {\em arXiv preprint arXiv:2112.08783}, 2021.

\bibitem{FJW20}
Pierre~Aime Feulefack, Sven Jarohs, and Tobias Weth.
\newblock Small order asymptotics of the Dirichlet eigenvalue problem for the
  fractional laplacian.
\newblock {\em arXiv preprint arXiv:2010.10448}, 2020.

\bibitem{FKT20}
Rupert~L. Frank, Tobias K\"{o}nig, and Hanli Tang.
\newblock Classification of solutions of an equation related to a conformal log
  {S}obolev inequality.
\newblock {\em Adv. Math.}, 375:107395, 27, 2020.

\bibitem{HB10}
Youn~Doh Ha and Florin Bobaru.
\newblock Studies of dynamic crack propagation and crack branching with
  peridynamics.
\newblock {\em International Journal of Fracture}, 162(1):229--244, 2010.

\bibitem{HSS21}
V\'{\i}ctor Hern\'{a}ndez-Santamar\'{\i}a and Alberto Salda\~{n}a.
\newblock Existence and convergence of solutions to fractional pure critical
  exponent problems.
\newblock {\em Adv. Nonlinear Stud.}, 21(4):827--854, 2021.

\bibitem{JSW20}
Sven Jarohs, Alberto Salda\~{n}a, and Tobias Weth.
\newblock A new look at the fractional {P}oisson problem via the logarithmic
  {L}aplacian.
\newblock {\em J. Funct. Anal.}, 279(11):108732, 50, 2020.

\bibitem{JW}
Sven Jarohs and Tobias Weth.
\newblock On the strong maximum principle for nonlocal operators.
\newblock {\em Mathematische Zeitschrift}, 293(1):81--111, 2019.

\bibitem{KM13}
Moritz Kassmann and Ante Mimica.
\newblock Intrinsic scaling properties for nonlocal operators.
\newblock {\em J. Eur. Math. Soc. (JEMS)}, 19(4):983--1011, 2017.

\bibitem{LW20}
Ari Laptev and Tobias Weth.
\newblock Spectral properties of the logarithmic {L}aplacian.
\newblock {\em Anal. Math. Phys.}, 11(3):Paper No. 133, 24, 2021.

\bibitem{L04}
Sergei~Z. Levendorski\u{\i}.
\newblock Pricing of the {A}merican put under {L}\'{e}vy processes.
\newblock {\em Int. J. Theor. Appl. Finance}, 7(3):303--335, 2004.

\bibitem{PV18}
Benedetta Pellacci and Gianmaria Verzini.
\newblock Best dispersal strategies in spatially heterogeneous environments:
  optimization of the principal eigenvalue for indefinite fractional {N}eumann
  problems.
\newblock {\em J. Math. Biol.}, 76(6):1357--1386, 2018.

\bibitem{SV17}
J\"{u}rgen Sprekels and Enrico Valdinoci.
\newblock A new type of identification problems: optimizing the fractional
  order in a nonlocal evolution equation.
\newblock {\em SIAM J. Control Optim.}, 55(1):70--93, 2017.

\bibitem{SW10}
Andrzej Szulkin and Tobias Weth.
\newblock The method of {N}ehari manifold.
\newblock In {\em Handbook of nonconvex analysis and applications}, pages
  597--632. Int. Press, Somerville, MA, 2010.

\bibitem{TW21}
Remi~Yvant Temgoua and Tobias Weth.
\newblock The eigenvalue problem for the regional fractional {L}aplacian in the
  small order limit.
\newblock {\em arXiv preprint arXiv:2112.08856}, 2021.

\bibitem{W96}
Michel Willem.
\newblock {\em Minimax theorems}, volume~24 of {\em Progress in Nonlinear
  Differential Equations and their Applications}.
\newblock Birkh\"{a}user Boston, Inc., Boston, MA, 1996.

\end{thebibliography}
\end{document}